 \numberwithin{equation}{section}
\def\bB{{\mathbb{B}}}
\def\bC{{\mathbb{C}}}
\def\bR{{\mathbb{R}}}
\def\R{{\mathbb{R}}}
\def\bZ{{\mathbb{Z}}}
\def\cB{{\mathscr{B}}}
\def\cD{{\mathscr{D}}}
\def\cF{{\mathscr{F}}}
\def\cG{{\mathscr{G}}}
\def\cH{{\mathscr{H}}}
\def\cT{{\mathscr{T}}}
\def\cW{{\mathscr{W}}}
\def\one{\mathds{1}}
\def\ve{\varepsilon}
\renewcommand{\d}{{\partial}}
\def\lec{\lesssim}
\def\gec{\gtrsim}
\DeclareMathOperator{\diam}{diam}
\def\dist{\mathop\mathrm{dist}} 						
\def\supp{\mathop\mathrm{supp}}					
\newcommand{\ps}[1]{\left( #1 \right)}
\newcommand{\ck}[1]{\left\{#1 \right\}}
\newcommand{\floor}[1]{\left\lfloor #1 \right\rfloor}
\newcommand{\cnj}[1]{\overline{#1}}
\newcommand{\isif}[1]{\left\{\begin{array}{cc} #1
\end{array}\right.}
\def\warrow{\rightharpoonup}
\def\XXint#1#2#3{{\setbox0=\hbox{$#1{#2#3}{\int}$ }
\vcenter{\hbox{$#2#3$ }}\kern-.58\wd0}}
\theoremstyle{plain}
\newtheorem{theorem}{Theorem}
\newtheorem{lemma}[theorem]{Lemma}
\theoremstyle{definition}
\newtheorem{definition}[theorem]{Definition}
\newtheorem{remark}[theorem]{Remark}
\numberwithin{equation}{section}
\numberwithin{theorem}{section}
\newtheorem{main}{Theorem}
\def\TheoremI{Theorem \ref{thmi} }
\newtheorem{prop}{Proposition}
  \DeclareFontFamily{U}{mathb}{\hyphenchar\font45} 
\DeclareFontShape{U}{mathb}{m}{n}{
      <5> <6> <7> <8> <9> <10> gen * mathb
      <10.95> mathb10 <12> <14.4> <17.28> <20.74> <24.88> mathb12
      }{}
\DeclareSymbolFont{mathb}{U}{mathb}{m}{n}
\DeclareMathSymbol{\toitself}{3}{mathb}{"FD}  
\begin{document}

\title[Semi-uniform domains and the $A_{\infty}$ property]{Semi-uniform domains and the $A_{\infty}$ property for harmonic measure}
\author{Jonas Azzam}
\address{School of Mathematics, University of Edinburgh, JCMB, Kings Buildings,
	Mayfield Road, Edinburgh,
	EH9 3JZ, Scotland.}
\email{j.azzam "at" ed.ac.uk}

\begin{abstract}
We study the properties of harmonic measure in semi-uniform domains. Aikawa and Hirata showed in \cite{AH08} that, for John domains satisfying the capacity density condition (CDC), the doubling property for harmonic measure is equivalent to the domain being semi-uniform. Our first result removes the John condition by showing that any domain satisfying the CDC whose harmonic measure is doubling is semi-uniform. Next, we develop a substitute for some classical estimates on harmonic measure in nontangentially accessible domains that works in semi-uniform domains. 

We also show that semi-uniform domains with uniformly rectifiable boundary have big pieces of chord-arc subdomains. We cannot hope for big pieces of Lipschitz subdomains (as was shown for chord-arc domains by David and Jerison  \cite{DJ90}) due to an example of Hrycak, which we review in the appendix.

Finally, we combine  these tools to study the $A_{\infty}$-property of harmonic measure. For a domain with Ahlfors-David regular boundary, it was shown by Hofmann and Martell that the $A_{\infty}$ property of harmonic measure implies uniform rectifiability of the boundary \cite{HM15,HLMN17} . Since $A_{\infty}$-weights are doubling, this also implies the domain is semi-uniform. Our final result shows that these two properties, semi-uniformity and uniformly rectifiable boundary, also imply the $A_{\infty}$ property for harmonic measure, thus classifying geometrically all domains for which this holds. 
\end{abstract}

\subjclass[2010]{31A15, 28A75, 28A78, 31B05, 35J25}

\maketitle

\tableofcontents

\section{Introduction}
In this paper we study a few connections between the geometry of a domain $\Omega\subseteq \R^{d+1}$ and the behavior of its harmonic measure $\omega_{\Omega}^{x}$ with pole $x\in \Omega$. Our motivation is to obtain a  characterization of the $A_{\infty}$-property for harmonic measure on the boundary, however this first  requires understanding how the connectivity properties of a domain relate to the doubling properties of harmonic measure. Below we define a few common notions of connectivity that are studied in this context.

\begin{definition}
Let $\Omega\subseteq \bR^{d+1}$ be an open set. 
\begin{enumerate}
	\item For $x,y\in \cnj{\Omega}$, we say a curve $\gamma\subseteq \cnj{\Omega}$ is a {\bf $C$-cigar curve} from $x$ to $y$ if $\min\{\ell(x,z),\ell(y,z)\}\leq C \dist(z,\Omega^{c})$ for all $z\in \gamma$, where $\ell(a,b)$ denotes the length of the sub-arc in $\gamma$ between $a$ and $b$.  We will also say it has {\bf bounded turning} if  $\ell(\gamma)\leq C |x-y|$. 
\item If there is $x\in \Omega$ such that every $y\in \Omega$ is connected to $x$ by a curve $\gamma$ so that $\ell(y,z)\leq C \dist(z,\Omega^{c})$ for all $z\in \Gamma$,  we say $\Omega$ is {\bf $C$-John}.
\item If every pair $x\in \Omega$ and $\xi\in \d\Omega$ are connected by a $C$-cigar with bounded turning, then we say $\Omega$ is {\bf $C$-semi-uniform (SU)}.
\item If every $x,y\in \Omega$ are connected by a $C$-cigar of bounded turning, we say $\Omega$ is {\bf uniform}. 
\item For a ball $B$ of radius $r_{B}$ centered on $\d\Omega$, we say $x\in B$ is an {\bf interior/exterior $c$-corkscrew point} for $\Omega$ if $B(x,2cr_{B})\subseteq B\cap \Omega$ (or $B(x,2cr_{B})\subseteq B\backslash \Omega$) . We say $\Omega$ satisfies the interior {\bf $c$-Corkscrew condition} if every ball $B$ on $\d\Omega$ has a interior (or exterior) $c$-corkscrew point.
\item A uniform domain with exterior corkscrews is {\bf nontangentially accessible (NTA)}. 
\end{enumerate}	
\end{definition}

Note that a John domain is necessarily bounded. As mentioned in \cite{AH08}, these domains have the following containments:

\[
\mbox{NTA $\subsetneq$ Uniform $\subsetneq$ Semi-uniform $\subsetneq$ John}
\]
where the last containment is only true for bounded domains, and each of these containments can be strict. For example, the complement of a $4$-corner cantor set in $\R^{2}$ is uniform but not NTA. If we set $\Omega=\{(x,y)\in \bR^{d}\times \bR: y\neq 0 \mbox{ or }|x|< 1\}$, then this is semi-uniform but not uniform. A bounded example of a non-uniform semi-uniform domain is $\Omega= B(0,1)\backslash [-1/2,1/2]\subseteq \bC$, see Figure \ref{f:examples}.b. Note that each point along the segment is easily accessible from any other point in the domain by a curve of bounded turning, but points close to the segment and on opposite sides are not. If $\Omega=\bB\backslash [0,1]\subseteq \bC$, then $\Omega$ is John but not semi-uniform, as points in the bottom corner do not easily access boundary points above and near the top corner, see Figure \ref{f:examples}.a. 

Uniform domains were introduced independently by Martio and Sarvas \cite{MS79} and by Jones \cite{Jones-extension-theorems-for-bmo}. To our knowledge, semi-uniform domains were first defined (and only mentioned) by Aikawa and Hirata in \cite{AH08}.

\begin{figure}[!ht]
\includegraphics[width=100pt]{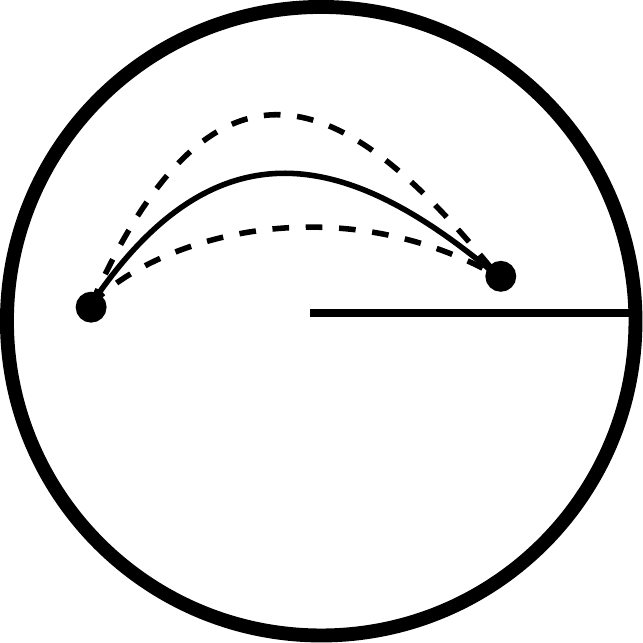}
\;\;\;
\includegraphics[width=100pt]{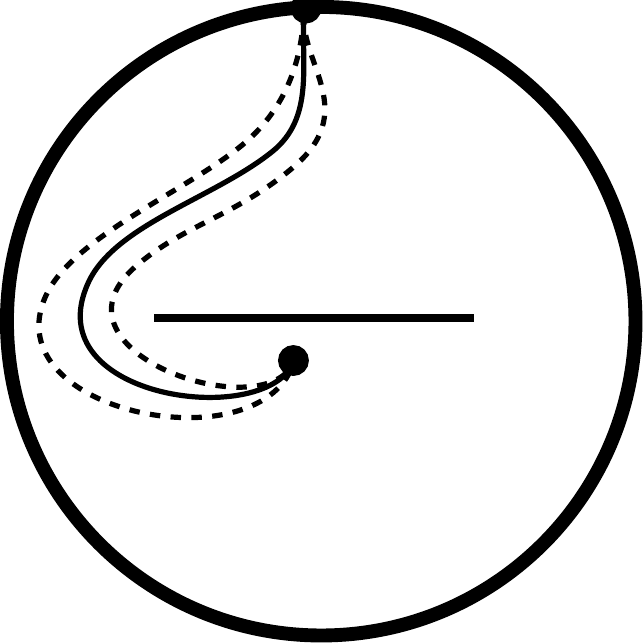}
\begin{picture}(0,0)(300,0)
\put(85,-10){a. John but not SU}
\put(195,-10){b. SU but not uniform.}
\put(100,40){$x_{0}$}
\end{picture}
\vspace{5pt}
\caption{The domain on the left is John but not SU since points on the boundary just above the slit are not accessible by short curves from points in the domain directly below the slit. The figure on the right is SU since every point on the boundary is accessible from any point in the domain by a cigar curve of bounded turning. Not every pair of points in the domain are connected in this way, since there are no short curves connecting points close to but on opposite sides of the slit, and so it is not uniform.}
\label{f:examples}
\end{figure}

Jerison and Kenig introduced NTA domains in \cite[Lemma 4.9]{JK82} since they were domains which could have very rough or non-smooth boundaries and yet their harmonic measures still had nice properties such as the doubling property: If $\Omega\subseteq \bC$ is a John domain, then harmonic measure is doubling, meaning there is $C>0$ (depending on $x\in \Omega$) so that for all balls $B$ centered on $\d\Omega$, $\omega_{\Omega}^{x}(2B)\leq C\omega_{\Omega}^{x}(B)$ (c.f. \cite[Exercise VII.13(e)]{Harmonic-Measure}). Because of this and other nice scale invariant properties, these domains have become ubiquitous in the literature on harmonic and elliptic measure (for example, \cite{Wu86,DJ90,KT06,KPT09}). We will discuss more of these properties below. 

For some of these properties, the full NTA condition is not needed. For example, Aikawa and Hirata showed that, in the case of John domains satisfying the capacity density condition, doubling is in fact equivalent to semi-uniformity.

\begin{theorem}
\label{t:AH08}
\cite{AH08} Let $\Omega\subseteq \bR^{d+1}$ be a John domain with the CDC. Then the following are equivalent:
\begin{enumerate}
\item There are $A,A_{0}>0$ so that
\begin{equation}
\label{AHdoubling}
\omega^{x}(2B)\leq A \omega^{x}(B) \mbox{ for all }x\in \Omega\backslash A_{0}B \mbox{ and $B$ centered on $\d\Omega$}.
\end{equation}
\item $\Omega$ is semi-uniform
\end{enumerate}
\end{theorem}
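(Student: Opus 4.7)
The plan is to prove the two directions separately, with (2)$\Rightarrow$(1) being a more-or-less standard package of Harnack-chain and corkscrew estimates, and (1)$\Rightarrow$(2) requiring genuine work to extract geometric information from the doubling of $\omega$.

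For (2)$\Rightarrow$(1), fix a ball $B=B(\xi,r)$ centered on $\partial\Omega$ and a pole $x\in\Omega\setminus A_0 B$. The CDC yields a Bourgain-type lower bound $\omega^{z}(B)\gtrsim 1$ at any interior corkscrew $z\in B$ (such a $z$ exists by the John property applied at scale $\gtrsim r$). The cigar curve from $x$ to $\xi$ furnished by semi-uniformity, combined with its bounded turning, must pass through a point $z'$ with $\dist(z',\partial\Omega)\gtrsim r$ and $|z'-\xi|\lesssim r$; the tubular structure of the cigar above that scale provides a Harnack chain from $x$ to $z'$ of length controlled by $C$. One then runs the standard CFMS-type argument to obtain the two-sided comparison
\[
\omega^{x}(B)\;\approx\; G(x,z')\, r^{d-1} \;\approx\; \omega^{x}(2B),
\]
where $G$ is the Green's function, which is doubling.

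For (1)$\Rightarrow$(2), fix $\xi\in\partial\Omega$ and a reference scale $r_0\approx\diam\Omega$, and use the John$+$CDC structure to produce interior corkscrew points $x_k$ for the balls $B_k:=B(\xi,2^{-k}r_0)$. The main step is to show that consecutive corkscrews $x_k, x_{k+1}$ are connected in $\Omega$ by a Harnack chain of bounded length, yielding a curve segment of length $\lesssim 2^{-k}$ staying at distance $\gtrsim 2^{-k}$ from $\partial\Omega$. To this end I work with the pole at the John center $x_0$ (which lies outside $A_0 B_k$ for every $k\ge k_0$), write $\omega^{x_0}(B_k)\approx G(x_0,x_k)\,(2^{-k}r_0)^{d-1}$ using CDC, and invoke \eqref{AHdoubling} to conclude $G(x_0,x_{k+1})\approx G(x_0,x_k)$ with constants depending only on $A$ and the dimension. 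Harnack's inequality along any path in $\Omega$ joining $x_k$ and $x_{k+1}$ then forces the number of Whitney balls needed to be uniformly bounded, giving the desired chain. Concatenating these chains produces a cigar $\gamma_\xi$ of bounded turning from $x_0$ to $\xi$. For an arbitrary interior point $x$, I then concatenate the John curve from $x$ to $x_0$ with $\gamma_\xi$, but truncated: if $|x-\xi|\ll\diam\Omega$ I stop the John curve at the first Whitney region of size $\approx|x-\xi|$ and jump onto $\gamma_\xi$ at the corresponding scale, so that bounded turning holds even in this regime.

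The principal obstacle is the geometric-from-potential-theoretic extraction in (1)$\Rightarrow$(2): doubling is a condition on a measure supported on $\partial\Omega$, while the conclusion requires connecting every interior point to every boundary point by an actual curve. The bridge is the Green-function/harmonic-measure comparison at corkscrews, which is available here because CDC plus John gives enough regularity to run CFMS-type estimates without full NTA. Care is needed because \eqref{AHdoubling} only applies for $x\notin A_0 B$, which is why the doubling must be applied at the fixed John center rather than at a varying pole; one also has to handle the case when $x$ and $\xi$ are close in the final concatenation, as described above, where the John curve is used only to descend to the correct Whitney scale and not all the way to $x_0$.
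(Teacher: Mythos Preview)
First, note that this theorem is not proved in the present paper; it is quoted from \cite{AH08}. The paper's own contribution is the stronger Theorem~\ref{thmi}, which drops the John hypothesis, and whose proof in Section~3 proceeds quite differently from your sketch.

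Your outline for (2)$\Rightarrow$(1) is reasonable and in the spirit of standard CFMS-type arguments. However, your (1)$\Rightarrow$(2) argument has a genuine gap. From doubling you correctly obtain $G(x_0,x_k)\approx G(x_0,x_{k+1})$, but you then assert that ``Harnack's inequality along any path in $\Omega$ joining $x_k$ and $x_{k+1}$ then forces the number of Whitney balls needed to be uniformly bounded.'' This inference is invalid. Harnack's inequality says that along a chain of $N$ balls a positive harmonic function varies by \emph{at most} a factor $C^N$; it gives no lower bound on the variation, so a near-unit ratio of $G(x_0,\cdot)$ at two points places no constraint whatsoever on the quasihyperbolic distance between them. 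Two points can sit on the same level set of $G(x_0,\cdot)$ and still require arbitrarily long Harnack chains---for instance, corkscrews on opposite sides of the slit in Figure~\ref{f:examples}(b). In a John domain the only a priori connectivity between $x_k$ and $x_{k+1}$ is through the John center $x_0$, which yields a chain of length $\sim k$, not $O(1)$.

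By contrast, the paper's proof of the generalization Theorem~\ref{thmi} sidesteps this issue entirely via compactness: Lemmas~\ref{down} and \ref{up} argue by contradiction, passing (through Lemma~\ref{limlem}) to a limit domain $\Omega_\infty$ in which doubling persists, and then using the mere \emph{connectedness} of $\Omega_\infty$ together with its corkscrew property to produce a finite chain whose length, being independent of $j$, contradicts the assumed blow-up. No attempt is made to read off chain-length bounds directly from Green-function ratios.
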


Another appealing property of NTA domains is the Carleson estimate.

\begin{definition}
	A corkscrew domain $\Omega\subseteq \bR^{d+1}$ has the {\bf Carleson estimate (CE)} if, for any $B$ centered on $\d\Omega$ and $u$ is a non-negative harmonic function on $2B\cap \Omega$ vanishing on $2B\cap \d\Omega$, if $y\in B\cap \Omega$ has $\dist(y,\Omega^{c})\geq \ve>0$, then 
	\[
	\sup_{B\cap \Omega} u \lec_{\ve} u(y). \]
	We say that $\Omega$ has the {\bf lesser Carleson estimate (LCE)} if there is a corkscrew point $y$ (depending on $u$) for which the above inequality holds. 
\end{definition}

It was first shown in \cite{JK82} that the CE holds in NTA domains. Later, Aikawa showed in \cite[Theorem 1.2]{Aik04} and \cite[Corollary 2]{Aik08} that, for CDC John domains, the CE condition is equivalent to $\Omega$ being uniform. 

Also proved in \cite{JK82} is the fact that for NTA domains $\Omega$, if $B$ is centered on $\d\Omega$ and $E\subseteq B$ is Borel, then there is $M>0$ so that
\begin{equation}
\label{e:jkw}
\frac{\omega_{\Omega}^{x}(E)}{\omega_{\Omega}^{x}(B)}\sim \omega^{x_{B}}_{\Omega}(E)\mbox{ for all }x\in \Omega\backslash MB. 
\end{equation}

Additionally, \eqref{e:jkw} was extended to uniform domains without exterior corkscrews. In fact, Aikawa showed that a crucial ingredient (the boundary Harnack principle, another nice scale invariant property) holds in any uniform domain \cite{Aik01}, and for uniform domains with the capacity density condition, the proof in \cite{JK82} carries over immediately. Later, Mourgoglou and Tolsa showed that \eqref{e:jkw} held in any uniform domain \cite{MT15} if we also divide the right hand side by $\omega^{x_{B}}_{\Omega}(B)$. 

These results show that the exterior corkscrew property was not necessary to acheive the same nice estimates as NTA domains, but only some nice connectivity like uniformity and some nondegeneracy in the boundary like the capacity density condition. For these reasons, uniform domains with ADR boundaries are in some sense the new NTA domains and are often studied, see \cite{HM14},\cite{HMU14}, \cite{ABHM17}, \cite{AHMNT17}, \cite{TZ17}, and \cite{HMMTZ17}. \\


Our first objective is to extend some techniques and results that are standard for NTA and uniform domains to semi-uniform domains as Aikawa and Hirata did for doubling measures. Initially, we wanted to prove Theorem \ref{t:AH08} without the John condition, but this is quite difficult. It seems that to get nice connectivity properties, we need to assume that the doubling condition \eqref{AHdoubling} also holds for balls that contain $x$. However, we can't have \eqref{AHdoubling} hold for all $x\in \Omega$, even when $\Omega$ is very nice. If $B$ is a ball on the boundary, then as $x\in \Omega$ approaches a point in $2B\cap \Omega \backslash \cnj{B}$, $\omega_{\Omega}^{x}(2B)\rightarrow 1$ whilst $\omega_{\Omega}^{x}(B)\rightarrow 0$. 

To avoid this issue, we don't have to require that $x$ remain outside some large ball $A_{0}B$, but that it stay away from the boundary of $\d\Omega$ inside that ball. 

Hence, in this paper, we will say harmonic measure is {\bf doubling} if there is a constant $A\geq 2$ and a function $C:(0,\infty)\rightarrow (1,\infty)$ so that, for any ball $B$ centered on $\d\Omega$  and $\alpha>0$, 
\begin{equation}\label{doubling}
\omega_{\Omega}^{x}(2B)\leq C(\alpha) \omega_{\Omega}^{x}(B) \mbox{ for all $x$ such that $\dist(x, AB\cap \d\Omega)\geq \alpha r_{B}$}.
\end{equation}

The work of Jerison and Kenig actually implies this stronger form of doubling, see \cite[Lemma 4.9]{JK82}. 

Our first result removes the John condition from Theorem \ref{t:AH08} using this definition of doubling.

\begin{main}\label{thmi}
Let $\Omega\subseteq \bR^{d+1}$ be a CDC domain. Then the following are equivalent:
\begin{enumerate}
\item $\omega_{\Omega}$ is doubling.
\item $\Omega$ is semi-uniform.
\end{enumerate}
\end{main}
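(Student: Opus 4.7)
The strategy is to upgrade the Aikawa--Hirata argument from the weak doubling \eqref{AHdoubling} to the stronger local form \eqref{doubling}. Given a ball $B$ centered on $\d\Omega$ and a point $x$ with $\dist(x, AB\cap\d\Omega)\geq \alpha r_B$, I would combine semi-uniformity with the CDC to produce a Bourgain-type lower bound relating $\omega^x(B)$ and $\omega^x(2B)$. The new feature relative to \eqref{AHdoubling} is that $x$ may lie close to $\d\Omega$ outside $AB$, but semi-uniformity supplies a cigar curve from any such $x$ to the top of $B$ (because, by definition, cigar curves exist to every boundary point), so the standard maximum principle comparison with Green's functions on $2B\cap\Omega$ still applies. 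The blowup of the constant $C(\alpha)$ as $\alpha\to 0$ comes from quantifying how much the Harnack chain length can grow as $x$ approaches $\d\Omega$ outside $AB$.

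\textbf{Plan for (1) $\Rightarrow$ (2).} Fix $x_{0}\in\Omega$ and $\xi\in\d\Omega$; the goal is to build a cigar curve of bounded turning from $x_{0}$ to $\xi$ as a chain of corkscrew points at geometrically decreasing scales toward $\xi$. First, using the CDC, Bourgain's lemma gives $\omega^{y}(B)\gec 1$ for $y$ in the top of $B$; combined with \eqref{doubling} this should yield an interior corkscrew $y_{B}\in B\cap\Omega$ with $\dist(y_{B},\d\Omega)\gec r_{B}$ for every ball $B$ centered on $\d\Omega$. Second, I would argue by contradiction that any two such corkscrews $y_{1},y_{2}$ at comparable scales with $|y_{1}-y_{2}|\lec r_{B}$ can be joined by a Harnack chain of uniformly bounded length: if not, one extracts a sequence of balls $B_{n}$ and points $x_{n}$ satisfying the hypotheses of \eqref{doubling} for which $\omega^{x_{n}}(2B_{n})/\omega^{x_{n}}(B_{n})$ is unbounded, contradicting doubling. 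Finally, choosing corkscrews $y_{j}$ in $B(\xi,2^{-j}|x_{0}-\xi|)$ for $j\geq 0$, concatenating the Harnack chains joining consecutive $y_{j}$, and terminating at $x_{0}$, produces a curve of geometrically controlled total length; this gives both bounded turning and the cigar condition $\ell(\xi,z)\lec \dist(z,\Omega^{c})$ along the arc approaching $\xi$, while the arc from $x_{0}$ to any intermediate point stays away from $\d\Omega$ by construction.

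\textbf{Main obstacle.} The hardest step will be extracting the Harnack chain condition from \eqref{doubling} alone. The dependence on $\alpha$ is delicate, because the contradiction argument must produce normalized objects at the right scale without letting $x_{n}$ drift into regions where the doubling hypothesis becomes vacuous. Controlling how close the constructed chain may pass to other boundary components of $\Omega$, and ensuring the Bourgain-type lower bounds remain uniform along the chain, are the main technical points I would expect to grapple with. Unlike the John case of Aikawa--Hirata, here we have no preferred interior point to serve as a backbone, so the construction must be performed entirely locally to the pair $(x_{0},\xi)$, which is the essential reason one needs the stronger doubling \eqref{doubling} rather than \eqref{AHdoubling}.
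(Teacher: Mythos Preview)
Your plan for $(2)\Rightarrow(1)$ matches the paper's: invoke the Aikawa--Hirata weak doubling \eqref{AHdoubling} and then use Harnack chains from semi-uniformity to upgrade to \eqref{doubling}. (One small misstatement: the points newly allowed by \eqref{doubling} are those \emph{inside} $AB$ at distance $\geq\alpha r_B$ from $\d\Omega\cap AB$; points outside $A_0B$, even close to $\d\Omega$, are already handled by \eqref{AHdoubling}.)

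For $(1)\Rightarrow(2)$ there are two genuine gaps.

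\textbf{The contradiction mechanism.} You propose that if two corkscrews at comparable scale cannot be joined by a short Harnack chain, one can directly produce balls $B_n$ and poles $x_n$ with $\omega^{x_n}(2B_n)/\omega^{x_n}(B_n)\to\infty$. It is not clear how the absence of a short chain yields such a specific ball; the passage from ``large quasihyperbolic distance'' to ``doubling fails at an identifiable location'' is exactly the hard step, and you have not supplied it. The paper does something different: it assumes doubling \emph{holds} uniformly along a sequence $\Omega_j$ for which the required chain has length $\geq j$, normalizes, and passes (via uniform convergence of Green's functions, Lemma~\ref{limlem}) to a limit domain $\Omega_\infty$. One must then check that $\Omega_\infty$ is still CDC and still doubling (Lemma~\ref{l:doublinglimit}), hence has interior corkscrews and is connected. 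In $\Omega_\infty$ a finite Harnack chain to the target corkscrew exists by mere connectedness, and this chain lies inside $\Omega_j$ for $j$ large---contradiction. The compactness machinery, and the stability of doubling under the limit, are the missing ingredients in your outline.

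\textbf{Only half the chain.} Your construction places corkscrews $y_j\in B(\xi,2^{-j}|x_0-\xi|)$ and descends toward $\xi$. This works only when $\delta_\Omega(x_0)\gtrsim|x_0-\xi|$, so that $x_0$ itself can serve as $y_0$. When $\delta_\Omega(x_0)\ll|x_0-\xi|$ you must first climb \emph{up} from $x_0$ (near its closest boundary point $\zeta$) to a corkscrew at scale $\sim|x_0-\xi|$ before descending toward $\xi$; otherwise neither the cigar condition near $x_0$ nor bounded turning is controlled. The paper proves a separate ``going up'' step (Lemma~\ref{up}), again by compactness, and its proof needs an extra input: one must show the limit domain is not trapped inside $\cnj{2\bB}$, which uses doubling applied to a ball centered at a boundary point outside $4B$ to force $\omega^{x_0}_{\Omega_j}(\d\Omega_j\backslash 3\bB)\gtrsim 1$. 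Your plan does not address this half of the construction.
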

 
Our second main result is a substitute for \eqref{e:jkw} for semi-uniform domains. 

\begin{main}
\label{thmiii}
Let $\Omega$ be a semi-uniform CDC domain, $B$ a ball centered on $\d\Omega$ with $r_{B}<\diam \d\Omega$, and $E\subseteq B\cap \d\Omega$. Then there is $M>0$ depending on the CDC and semi-uniformity constants and corkscrew points $x_{1},...,x_{n}\in B\cap \Omega$ with $n$ depending on the semi-uniformity constants so that 
\[
\min_{i=1,..,n} \omega_{\Omega}^{x_{i}}(E)
\lec \frac{\omega_{\Omega}^{x}(E)}{\omega_{\Omega}^{x}(B)}
\lec \max_{i=1,..,n}\omega_{\Omega}^{x_{i}}(E)
\mbox{ for all }x\in \Omega\backslash MB.
\]
\end{main}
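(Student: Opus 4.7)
The plan is to adapt the proof of \eqref{e:jkw} for uniform domains, replacing the single corkscrew $x_B$ by a finite family $x_1,\ldots,x_n$ that records the different ``inner approaches'' to $B$ allowed by semi-uniformity. A single corkscrew does not suffice precisely because of the obstruction in Figure~\ref{f:examples}.b: two corkscrews for $B$ on opposite sides of the slit cannot be joined by a Harnack chain inside $MB \cap \Omega$, and this chaining step is the only part of the uniform-domain proof that fails in our setting. The remaining ingredients (doubling, a boundary Harnack-type comparison valid away from a chosen boundary arc, and CDC kernel nondegeneracy) are available in a CDC semi-uniform domain: doubling by Theorem~\ref{thmi}, and the rest by inspection of the arguments of Aikawa~\cite{Aik01} and Mourgoglou--Tolsa~\cite{MT15}.

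Concretely, I would take the $x_i$ to be the centers of a maximal $\ve r_B$-separated collection of corkscrew points in $B \cap \Omega$ at depth $\gec \ve r_B$ from $\d \Omega$, with $\ve > 0$ small depending only on the semi-uniformity constant. A volume bound then gives $n = n(\ve,d)$. For $x \in \Omega \setminus MB$, the first-hit decomposition on $S = \d(MB) \cap \Omega$ yields
\[
\frac{\omega^{x}(E)}{\omega^{x}(B)}
\;=\;
\frac{\int_{S}\omega^{y}(E)\,d\mu_x(y)}{\int_{S}\omega^{y}(B)\,d\mu_x(y)},
\]
so the left-hand side is a weighted average of the ratios $\omega^y(E)/\omega^y(B)$ over $y\in S$. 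Since $\omega^{x_i}(B) \sim 1$ by the CDC and the doubling from Theorem~\ref{thmi}, the theorem then reduces to a pointwise comparison
\[
\frac{\omega^{y}(E)}{\omega^{y}(B)}
\;\sim\;
\frac{\omega^{x_{i(y)}}(E)}{\omega^{x_{i(y)}}(B)}
\qquad\text{for some } i(y)\in\{1,\ldots,n\},
\]
with constants depending only on the semi-uniformity and CDC constants, after which taking max and min over $i$ in the weighted average finishes the argument.

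The main obstacle is establishing this pointwise comparison. For a given $y \in S$, I would pick a semi-uniform cigar curve $\gamma$ from $y$ to a boundary point $\xi \in B \cap \d\Omega$: the cigar condition forces the middle portion of $\gamma$ to stay at depth $\gec r_B$ from $\d\Omega$ inside $2MB$, so it passes through a Harnack ball around one of the preselected $x_i$. Along this middle portion the comparison of $u = \omega^{\cdot}(E)$ and $v = \omega^{\cdot}(B)$ propagates by ordinary Harnack's inequality. The thin parts of $\gamma$ near the endpoints do approach $\d\Omega$, but $u$ and $v$ both vanish on $\d \Omega \setminus B$, so those parts can be handled by a local boundary Harnack estimate using CDC kernel bounds and doubling, following the strategy of \cite{MT15}. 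The delicate point is ensuring that the constants coming from traversing the thin parts are scale-invariant; this is where the bounded-turning clause of the semi-uniform cigar is essential, since it bounds the number of dyadic scales one must chain through.
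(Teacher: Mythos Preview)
Your reduction to a pointwise comparison
\[
\frac{\omega^{y}(E)}{\omega^{y}(B)} \;\sim\; \frac{\omega^{x_{i(y)}}(E)}{\omega^{x_{i(y)}}(B)}
\]
for each $y\in S=\d(MB)\cap\Omega$ is exactly where the argument breaks. When $\delta_{\Omega}(y)\ll r_{B}$, you propose to propagate the ratio $u/v$ (with $u=\omega^{\cdot}(E)$, $v=\omega^{\cdot}(B)$) along the thin part of a cigar near $y$ by a ``local boundary Harnack estimate using CDC kernel bounds and doubling, following \cite{MT15}.'' But \cite{MT15} works in \emph{uniform} domains, and Aikawa \cite{Aik06} proved that for CDC John domains the boundary Harnack principle is \emph{equivalent} to uniformity. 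So the very step you are invoking is unavailable in a genuinely semi-uniform domain; the slit example of Figure~\ref{f:examples}.b again shows why: for $y$ just above the slit and near $\d(MB)$, the ratio $\omega^{y}(E)/\omega^{y}(B)$ need not be comparable to the same ratio at any corkscrew at depth $\sim r_{B}$, because the two functions oscillate independently across the slit as $y$ approaches it. The bounded-turning clause controls the number of dyadic scales, but it does not supply the two-sided oscillation estimate you need.

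The paper does not attempt a pointwise comparison at all. It separates the two inequalities and proves only a \emph{one-sided} BHP substitute,
\[
u(x)\lec \Big(\sum_{i} u(y_{i})\Big)\sum_{i} r_{B}^{d-1}G_{\Omega}(x,y_{i}),
\]
valid for any nonnegative harmonic $u$ vanishing on $2M_{0}B\cap\d\Omega$, where the $y_{i}$ are \emph{reference points} for $B$ (Whitney-scale points to which every $x\in B$ connects by a chain of length $\lec\log(r_{B}/\delta_{\Omega}(x))+1$; this is not the same as a maximal net of corkscrews). This inequality is obtained by building an auxiliary domain $\Omega'\subseteq\Omega$ out of dilated Whitney cubes along those chains, in which the $y_{i}$ remain reference points, and then combining Lemma~\ref{l:AHlemma} with Lemma~\ref{l:carleson}. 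Applied to $u=G_{\Omega}(x,\cdot)$ together with \eqref{w>G}, it gives the upper bound $\omega^{x}(E)/\omega^{x}(B)\lec\sum_{i}\omega^{y_{i}}(E)$. The lower bound uses a \emph{different} auxiliary domain $\Omega'$ containing $\Omega\backslash B$ (again a union of Whitney chains, now to reference points outside $2B$), the maximum principle to get $\omega_{\Omega}^{x}(B)\leq\omega_{\Omega'}^{x}(2B)$, Lemma~\ref{l:AHlemma} in $\Omega'$, and the strong Markov property on $\d\Omega'$; this produces a single index $j$ with $\omega^{x}(E)\gec\omega^{x_{j}}(E)\,\omega^{x}(B)$. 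Neither half ever compares $u/v$ at a point close to $\d\Omega$.
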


For semi-uniform CDC domains, one can show that the LCE holds, and the proof is more or less the same as that in \cite{JK82} (see Lemma \ref{l:carleson} below). 
\\

We now turn to the second objective of this paper (which was also the motivation for proving the previous two theorems), which concerns the relationship between the absolute continuity properties of harmonic measure and the geometry of the domain. Very recently, it was shown in \cite{AHMMMTV16} that for $\Omega\subseteq \R^{d+1}$, if $E\subseteq \d\Omega$ (with $\cH^{d}(E)<\infty$ if $d>1$) and $\omega_{\Omega}|_{E}\ll \cH^{d}|_{E}$, then $E$ can be covered by $d$-dimensional Lipschitz graphs up to harmonic measure zero. This generalizes a result of Pommerenke who showed the same result for simply connected planar domains \cite{Pom86}. 

Being covered by Lipschitz graphs is not enough to be absolutely continuous with respect to $\cH^{d}$, however: the complement of the $\frac{2}{3}$-Cantor set as a subset of $\bR^{2}$ is one example, so some extra assumptions are needed. Bishop and Jones (generalizing work of McMillan \cite{McM69}) showed that, for simply connected planar domain, absolute continuity occurs in the subset of any rectifiable curve \cite{BJ90}. 

Higher dimensional versions of their work are false without some extra assumptions due to an example of Wu \cite{Wu86}. We showed recently with Akman and Mourgoglou that, under a lower $d$-regularity assumption on the complement of a domain $\Omega\subseteq \R^{d+1}$ (which is satisfied by many domains, including simply connected planar domains and NTA domains), $\omega_{\Omega}\ll \cH^{d}$ on any subset of a Lipschitz graph (and in fact more general surfaces) \cite{AAM16} (the techniques of which build off of previous more quantitative results, which we will describe shortly) which generalizes the work of Wu who assumed an exterior corkscrew condition \cite{Wu86}. Combining this with the work of \cite{AHMMMTV16} classifies absolute continuity for this kind of domain. 

In addition to knowing that harmonic measure and surface measure share the same null sets, one can also ask when this holds in a quantitative sense. Results of this nature typically assume some stronger properties about the surface measure on the boundary. The first example (which forms the foundation of all subsequent results establishing absolute continuity in higher dimensions) is the result of Dahlberg \cite{Dah77} that harmonic measure for a Lipschitz domain is an $A_{2}$-weight. 

Recall that a set $E$ is {\bf $d$-Ahlfors-David regular (ADR)}, or just {\bf $d$-regular}, if there is $C>0$ such that 
\[
C^{-1} r^{d} \leq \cH^{d}(B(x,r)\cap E)\leq Cr^{d} \mbox{ for all $x\in E$ and $0<r<\diam E$}.\]

We will say that harmonic measure is $A_{\infty}$ if , for all $\alpha>0$, $\omega_{\Omega}^{x}\in A_{\infty}(\d\Omega\cap B,\cH^{d})$ for any ball $B$ centered on $\d\Omega$ and $x\in \Omega$ with $\dist(x,AB)>\alpha r_{B}$. That is, for all $\delta>0$ there is $\ve>0$ (also depending on $\alpha$) so that if $E\subseteq \d\Omega\cap B$ and 
\[
\cH^{d}(E)<\ve \cH^{d}(B\cap \d\Omega),\]
then 
\[
\omega_{\Omega}^{x}(E)<\delta \omega_{\Omega}^{x}(B).\]

Seeking out this form of absolute continuity has applications for PDEs: in \cite{HL16}, for example, Hofmann and Le showed that BMO solvability of the Dirichlet problem for the Laplacian is implied by the $A_{\infty}$-property (in fact, it is implied by the weak $A_{\infty}$ property). 

We say a domain is a {\bf chord-arc domain (CAD)} if it is NTA with Ahlfors regular boundary. In the plane, this is equivalent to the boundary being a chord-arc (or bi-Lipschitz) curve. Lavrentiev showed in \cite{Lav36} that, for chord-arc domains in the plane, harmonic measure is in $A_{\infty}$, and in fact, for Jordan domains with Ahlfors regular boundaries, the converse holds as well (for modern treatments of both these facts, see \cite[Section VII.4]{Harmonic-Measure}). Independently, David and Jerison \cite{DJ90} and Semmes \cite{Sem90} proved Lavrentiev's theorem for higher dimensions. The common thread to both proofs is to reduce things to Dahlberg's original result by approximating the domain from within by Lipschitz subdomains. In particular, in \cite{DJ90} the authors first prove that a CAD has {\bf big pieces of Lipschitz subdomains (BPLS)}: for every ball $B=B(x.r)$ $x\in \d\Omega$ and $0<r<\diam \Omega$, there is a Lipschitz domain $\Omega_{B}\subseteq B\cap \Omega$ so that $\cH^{d}(\d\Omega_{B}\cap \d\Omega)\geq cr^{d}$. Dahlberg's result shows that harmonic measure is an $A_{\infty}$-weight, and then using the maximum principle one can show that the $A_{\infty}$ property for the original domain is inherited from these subdomains.

To date, these are the most general domains for which the $A_{\infty}$ property (as we have defined it) has been proven to hold, and there is yet no result that says exactly for which domains it holds. The only exception are when assuming the domain is uniform with ADR boundary (see \cite{HM14,HMU14}), but even in this setting, the $A_{\infty}$ property is actually equivalent to the domain being a CAD \cite{AHMNT17}.  The most general kind of domain for which the $A_{\infty}$ property holds that follows immediately from results in the literature (although isn't stated anywhere) are semi-uniform domains with BPLS: Bennewitz and Lewis showed  in \cite{BL04} that harmonic measure satisfies a weak-reverse H\"older inequality in corkscrew domains with BPLS; in semi-uniform domains, the corkscrew property is immediate, and because harmonic measure is doubling, harmonic measure actually satisfies the usual reverse H\"older inequality and hence our $A_{\infty}$ condition by classical results (see \cite[Chapter 5]{Ste93}). 

There are some necessary conditions our domain must satisfy for harmonic measure to be $A_{\infty}$. Firstly, Hofmann and Martell showed that the boundary is {\bf uniformly rectifiable (UR)} \cite{HM15}\footnote{They actually show that the so-called weak-$A_{\infty}$ property implies UR, although we will not discuss this class of measures.} : $\d\Omega$ is $d$-regular and there are $L,c>0$ so that, for each ball $B(x,r)$ centered on $\d\Omega$ with $0<r<\d\Omega$ there is an $L$-Lipschitz map $f:\bR^{d}\rightarrow \bR^{d+1}$ so that 
\[
\cH^{d}(B(x,r)\cap \d\Omega\cap f(\R^{d}))\geq cr^{d}.\] 
This paper is an Arxiv preprint, although later they extended this result to $p$-harmonic measures in a paper with Le and Nystr\"om \cite{HLMN17}. Mourgoglou and Tolsa also developed a local result that works when harmonic measure is not doubling \cite{MT15}.  See also \cite{HMT16} and \cite{HMMTZ17} for the elliptic setting. Secondly, since $A_{\infty}$-weights are doubling, Theorem \ref{thmi}  implies that such domain must also be semi-uniform. Our third result confirms that these conditions are also sufficient, thus classifying the $A_{\infty}$ property for harmonic measure. 

\begin{main}\label{thmii}
Let $\Omega\subseteq \bR^{d+1}$ be a domain with $d$-regular boundary. Then the following are equivalent:
\begin{enumerate}
\item $\Omega$ is a semi-uniform domain with ADR and UR boundary. 
\item $\Omega$ is a semi-uniform domain with ADR boundary and very big pieces of chord-arc subdomains (VBPCAS): for every ball $B$ centered on $\d\Omega$ and $\ve>0$, there is $\Omega_{B}\subseteq B\cap \Omega$ a CAD (with constants depending on the semi-uniformity, Ahlfors regularity, and on $\ve$) so that 
\[
\cH^{d}(\d\Omega\cap B\backslash \d\Omega_{B})<\ve \cH^{d}(\d\Omega\cap B).
\]
\item Harmonic measure is $A_{\infty}$. 
\end{enumerate}
\end{main}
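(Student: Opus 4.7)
The plan is to prove the cyclic chain $(1) \Rightarrow (2) \Rightarrow (3) \Rightarrow (1)$. The implication $(3) \Rightarrow (1)$ is immediate from results already cited: every $A_{\infty}$-weight is doubling, so Theorem \ref{thmi} yields semi-uniformity; ADR is assumed; and uniform rectifiability follows from Hofmann--Martell \cite{HM15} (further developed in \cite{HLMN17}).

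For $(1) \Rightarrow (2)$, the main geometric step, I would first establish big pieces of chord-arc subdomains (BPCAS): every ball $B$ centered on $\d\Omega$ admits a CAD $\Omega_{B}^{(0)} \subseteq B \cap \Omega$ with $\cH^{d}(\d\Omega_{B}^{(0)} \cap \d\Omega) \geq c r_{B}^{d}$. The construction combines the David--Semmes corona decomposition of the UR set $\d\Omega$---which, in a coherent stopping-time region, produces a Lipschitz-graph approximation of $\d\Omega$ on a definite portion of $B \cap \d\Omega$---with the semi-uniform geometry of $\Omega$: one caps the graph from above using Whitney cubes of $\Omega$ connected to a top corkscrew by bounded-turning cigar curves, producing an NTA region with ADR boundary whose intersection with $\d\Omega$ has surface measure comparable to $r_B^d$. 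Second, bootstrap BPCAS to VBPCAS. Fix $\ve > 0$ and iterate: having constructed $\Omega_{B}^{(n)}$, let $F_{n} = (B \cap \d\Omega) \setminus \d\Omega_{B}^{(n)}$; cover $F_n$ by a bounded-overlap family of boundary balls at local scales, apply BPCAS inside each to produce CADs, and graft them onto $\Omega_{B}^{(n)}$ via thin semi-uniform tubes. Each tube has bounded turning and adds negligible surface area, so the union $\Omega_{B}^{(n+1)}$ remains NTA with ADR boundary. One obtains geometric decay $\cH^{d}(F_n) \leq (1-c)^{n} \cH^{d}(B \cap \d\Omega)$, so stopping when $(1-c)^{n} < \ve$ produces the required $\Omega_B$.

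For $(2) \Rightarrow (3)$, I would adapt the argument of Bennewitz--Lewis \cite{BL04}, who treated corkscrew domains with big pieces of Lipschitz subdomains. Since the Dahlberg--David--Jerison--Semmes theorem \cite{Dah77,DJ90,Sem90} says that $\omega_{\Omega_B}$ is an $A_{\infty}$-weight against $\cH^{d}|_{\d\Omega_B}$ for any CAD $\Omega_B$, the Bennewitz--Lewis local estimates adapt directly and produce a weak reverse H\"older inequality for $\omega_\Omega$ at any corkscrew pole of any ball $B$ centered on $\d\Omega$. In the semi-uniform CDC setting, harmonic measure is doubling by Theorem \ref{thmi}, so the weak reverse H\"older inequality upgrades to a full reverse H\"older inequality (see \cite[Chapter 5]{Ste93})---which is the $A_\infty$ property at that pole. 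Finally, Theorem \ref{thmiii} transfers the conclusion to an arbitrary $x$ with $\dist(x, AB \cap \d\Omega) \geq \alpha r_B$: the comparison $\omega_\Omega^x(E)/\omega_\Omega^x(B) \lesssim \max_i \omega_\Omega^{x_i}(E)$ together with $A_\infty$ at each corkscrew $x_i$ yields $\omega_\Omega^x(E) < \delta \omega_\Omega^x(B)$ whenever $\cH^{d}(E) < \ve \cH^{d}(B \cap \d\Omega)$.

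The main obstacle is the bootstrap in $(1) \Rightarrow (2)$: producing a \emph{single} CAD whose boundary captures all but $\ve$ of $B \cap \d\Omega$ requires iteratively grafting CAD pieces without destroying NTA connectivity or ADR regularity of the combined boundary. Semi-uniformity is what makes this possible, as it supplies the bounded-turning cigar curves used to construct the connecting tubes, whose vanishing surface contribution preserves Ahlfors regularity in the limit.
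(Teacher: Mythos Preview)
Your cycle $(3)\Rightarrow(1)$ matches the paper. The other two implications are correct in outline but take different routes from the paper, and the comparison is worth recording.

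For $(1)\Rightarrow(2)$ the paper does \emph{not} bootstrap from BPCAS to VBPCAS. It builds the $\varepsilon$-good chord-arc subdomain in one pass: using the bilateral corona decomposition of \cite{HMM14}, it fixes a depth $N=N(\delta)$ of stopping-time regions below $Q_0$ (chosen via the Carleson packing condition so that points meeting more than $N$ stopping-time tops have small measure), builds a graph domain $\Omega_S^{\pm}$ for each region $S$, and links them by semi-uniform tubes $T_S$. The device that prevents pinching is a buffer parameter $\tau$: one excises a $\tau$-neighborhood of each cube boundary from the good set $G$, forcing the pieces $\Omega_S$ attached to disjoint tops to stay $\gtrsim\tau\ell(Q)$ apart near $\partial\Omega$. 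Your iterative scheme would need this same separation mechanism at every round, and once you have it the one-shot construction is cleaner---it avoids tracking CAD-constant degradation across $\sim\log(1/\varepsilon)$ grafting steps.

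For $(2)\Rightarrow(3)$ the paper bypasses the Bennewitz--Lewis machinery. Given $\omega_\Omega^x(E)/\omega_\Omega^x(Q_0)<\varepsilon$, it uses the \emph{lower} bound in Theorem~\ref{thmiii} to pass to a corkscrew $x_0$ with $\omega_\Omega^{x_0}(E)\lesssim\varepsilon$, takes the VBPCAS domain $\Omega_0\ni x_0$ with $\mathcal H^d(Q_0\setminus\partial\Omega_0)<\tfrac{\delta}{2}\mathcal H^d(Q_0)$, applies the maximum principle to get $\omega_{\Omega_0}^{x_0}(E)\lesssim\varepsilon$, and then David--Jerison yields $\mathcal H^d(E\cap\partial\Omega_0)<\tfrac{\delta}{2}\mathcal H^d(Q_0)$. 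Your route through weak reverse H\"older is valid (and notably uses only BPCAS, not the ``very big'' part), but trades the elementary maximum-principle step for the full \cite{BL04} argument plus the weak-to-strong upgrade.
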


Recall that, combining the works of \cite{HMU14} and\cite{AHMNT17}, for uniform domains with ADR boundary, the $A_{\infty}$ property is equivalent to the boundary being UR and equivalent to the domain being CAD. This equivalence doesn't hold for semi-uniform domains, since the complement of a line segment clearly satisfies the conditions of Theorem \ref{thmii} without being CAD.

[Addendum (February 21, 2018): Shortly after posting this paper to Arxiv, Hofmann and Martell posted another paper \cite{HM17} where they showed that the weak $A_{\infty}$ condition is implied by a so-called ``weak local John condition." This means that, for every $x\in \Omega$ there is a set $F\subseteq \d\Omega\cap B(x,2\dist(x,\Omega^{c}))$ so that, for all $\xi\in F$, there is a path $\gamma\subseteq \Omega$ from $x$ to $\xi$ so that $\dist(z,\d\Omega)\geq c |z-\xi|$ for all $z\in \gamma$. Of course, this is weaker than being semi-uniform, and also, if we assume semi-uniformity, then this result combined with Theorem \ref{thmiii} gives an alternate proof that (1) implies (3) in Theorem \ref{thmii}. Indeed, since this implies weak $A_{\infty}$, we can obtain the local $A_{\infty}$ property using doubling and then our global $A_{\infty}$ condition by using Theorem \ref{thmiii} as in the proof of Theorem \ref{thmii} in Section \ref{s:proof-of-theorem}. However, the techniques and approach of Hofmann and Martell in their paper are quite different. They use the method of ``extrapolation" of Carleson measures to estimate harmonic measure directly, whereas we model our argument on that of \cite{DJ90} by building nice chord-arc subdomains that carve out a large piece of the boundary (that is, the implication that (1) implies (2) in Theorem \ref{thmii}, which is proven in Lemma \ref{l:CAD} below) to prove a local $A_{\infty}$ property, and then we use Theorem \ref{thmiii} to get the global $A_{\infty}$ property.]

In light of Bennewitz and Lewis' result mentioned earlier, however, it would be natural to ask if condition (1) was also equivalent to being semi-uniform with ADR boundary and BPLS. This is certainly true for CADs, as shown by David and Jerison in \cite{DJ90} (and this was crucial for their proof), so in the uniform setting, the $A_{\infty}$ property implies BPLS. However, there are examples of semi-uniform domains with ADR and UR boundary that do not have BPLS. One example is the complement of Hrycak's example, a well-known (and unpublished) set constructed by Hrycak, often cited in the literature on uniform rectifiability to show that not all UR sets have big pieces of Lipschitz graphs. In the appendix, we show the following.

\begin{prop}\label{p:main}
If $E\subseteq \R^{2}$ is Hrycak's example, then $E^{c}$ is a semi-uniform domain with UR boundary and does not have BPLS.
\end{prop}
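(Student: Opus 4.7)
My plan is to first recall Hrycak's construction together with its standard properties, and then verify the three claims of the proposition in sequence. Hrycak's example $E\subseteq\R^2$ is a $1$-Ahlfors regular, uniformly rectifiable set constructed iteratively by a self-similar scheme whose generating block introduces a rotation by a fixed angle $\theta>0$ at each generation; the whole point of the example, and its defining property in the UR literature, is that $E$ is UR yet has no big pieces of Lipschitz graphs (BPLG). Since Ahlfors regularity and UR are built into the construction (and can be checked from the self-similarity via the David--Semmes BP$^2$LG characterization, pigeonholing on the finitely many rotation classes present at any given generation), I would sketch these briefly and spend the bulk of the argument on the remaining two properties.

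For semi-uniformity of $\Omega := E^c$, fix $\xi\in E$ and $x\in\Omega$. The iterated construction ensures that for every $k\geq 0$ there is an interior corkscrew $y_k\in\Omega$ with $|y_k-\xi|\sim\dist(y_k,E)\sim 2^{-k}|x-\xi|$, and moreover $y_k$ and $y_{k+1}$ can be joined by an arc $\alpha_k\subseteq\Omega$ of length $\lec 2^{-k}|x-\xi|$ that stays in the gaps between consecutive generation-$k$ copies. Concatenating these arcs with a crude path from $x$ to $y_0$ yields a curve $\gamma$ of total length $\lec|x-\xi|$, which is the bounded turning condition. At any point of $\alpha_k$, both the arclength distance back to $\xi$ and the distance to $E$ are $\sim 2^{-k}|x-\xi|$, so $\gamma$ satisfies the cigar condition; since $\xi$ and $x$ were arbitrary, $\Omega$ is semi-uniform.

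The main obstacle is the failure of BPLS, which I would establish by contradiction. Suppose some ball $B$ centered on $E$ admits a Lipschitz subdomain $\Omega_B\subseteq B\cap\Omega$ with $\cH^1(\d\Omega_B\cap E)\gec r_B$. Since $\Omega_B$ is a planar Lipschitz domain, its boundary $\d\Omega_B$ is a Lipschitz curve that can be covered by a bounded number of arcs, on each of which $\d\Omega_B$ is the graph of a Lipschitz function in a fixed coordinate direction, with Lipschitz constant controlled by the Lipschitz character of $\Omega_B$. Applying pigeonhole to $\d\Omega_B\cap E$ across this covering, we extract a single Lipschitz graph $\Gamma$ and a subset of $E\cap CB$ of $\cH^1$-measure $\gec r_B$ contained in $\Gamma$ --- in other words, $E$ has a big piece of a Lipschitz graph on $B$, contradicting the defining property of Hrycak's example. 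The delicate point of this argument is tracking the constants: the Lipschitz constant of $\Gamma$ depends only on the Lipschitz character of $\Omega_B$, and one needs the ``big piece'' lower bound on $\cH^1(\Gamma\cap E\cap CB)$ to be uniform in $B$ in order to contradict the scale-uniform failure of BPLG for $E$.
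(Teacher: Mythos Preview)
Your overall strategy is reasonable, but the semi-uniformity argument is where your proposal diverges most from the paper and where it is weakest. The paper does \emph{not} build cigar curves by hand. Instead it constructs an explicit bi-Lipschitz map $F:E\to\R$ (with constant independent of the parameter $n$), invokes MacManus's bi-Lipschitz extension theorem to extend $F^{-1}$ to a global bi-Lipschitz map of $\R^{2}$, and observes that $E^{c}$ is therefore bi-Lipschitz equivalent to a domain of the form $\R^{2}\setminus A$ with $A\subseteq\R$, which is trivially semi-uniform. This also gives UR for free (a bi-Lipschitz image of a subset of a line is UR), so the paper never appeals to self-similarity or the BP$^{2}$LG machinery you mention.

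Your direct curve construction, by contrast, is underspecified at the crucial step. The ``gaps between consecutive generation-$k$ copies'' in the Venetian-blind construction are not clean corridors: segments from different generations are rotated by different angles and interleave in a complicated way, so an arc $\alpha_{k}$ that avoids the generation-$k$ segments need not stay $\gtrsim n^{-k}$ away from $E$ itself. You would have to verify this carefully, and the natural scale is $n^{-k}$, not $2^{-k}$. The bi-Lipschitz route sidesteps all of this geometry.

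On BPLS, your reduction to BPLG via pigeonholing on the Lipschitz arcs of $\partial\Omega_{B}$ is correct and standard. The paper instead argues directly from the projection property: the orthogonal projections of $E$ onto each of the $n$ directions $0,\theta,\dots,(n-1)\theta$ have length $\lesssim n^{-1}$, so an $L$-Lipschitz graph can meet at most $\lesssim L/n$ of $E$. One point to be aware of is that in the paper $E=E_{n}$ depends on the integer parameter $n$, and the conclusion is that for any prescribed Lipschitz constant $L$ and $\varepsilon>0$ one can choose $n$ large enough to defeat $(L,\varepsilon)$-BPLS, with semi-uniformity, ADR, and UR constants independent of $n$; you seem to be treating $E$ as a single fixed self-similar set, which is a slightly different reading of ``Hrycak's example.''
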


As as mentioned before, the implication (3) implies (1) in Theorem \ref{thmii} follows by Theorem \ref{thmi} and \cite{HM15}, so all we will show in this paper is the implication that (1) implies (2) and (2) implies (3). The proof of (1) implies (2) requires building chord-arc subdomains of $\Omega$ that carve out as much of the boundary as we please, and we know that harmonic measure is $A_{\infty}$ in this domain by \cite{DJ90}. We then use Theorem \ref{thmiii} and the maximum principle to prove $A_{\infty}$ for our original measure. The work of Bortz and Hofmann \cite{BH17} comes close to what we need by building a {\it union} of (possibly disjoint) chord-arc domains, and in essence what we do is show that these chord arc domains can be connected into one single CAD, although our construction in the end is quite different and uses some additional techniques in order to prove semi-uniformity. 
\\


We would like to thank Mihalis Mourgoglou and Xavier Tolsa for their helpful discussions and comments on the paper, Hiroaki Aikawa for answering our questions about semi-uniform domains, Alan Chang for carefully proofreading and correcting the appendix, and also the referees for their suggested corrections that greatly improved the paper.

\section{Preliminaries}

\subsection{Notation}

We will write $a\lesssim b $ if there is a constant $C>0$ so that $a\leq C b$ and $a\lesssim_{t} b$ if the constant depends on the parameter $t$. As usual we write $a\sim b$ and $a\sim_{t} b$ to mean $a\lesssim b \lesssim a$ and 
$a\lesssim_{t} b \lesssim_{t} a$ respectively. We will assume all implied constants depend on $d$ and hence write $\sim$ instead of $\sim_{d}$.

Whenever $A,B\subset\mathbb{R}^{d+1}$ we define
\[
\mbox{dist}(A,B)=\inf\{|x-y|;\, x\in A, \, y\in B\}, \, \mbox{and}\, \, \mbox{dist}(x,A)=\mbox{dist}(\{x\}, A). 
\]
Let $\diam A$ denote the diameter of $A$ defined as
\[
\diam A=\sup\{|x-y|;\, x,y\in A\}.
\]

For a domain $\Omega$ and $x\in \Omega$, we will write
\[
\delta_{\Omega}(x) =\dist(x,\d\Omega).
\]

We let $B(x,r)$ denote the open ball centered at $x$ of radius $r$. For a ball $B$, we will denote its radius by $r_{B}$. 

\subsection{Harnack Chains}

\begin{definition}
	Let $\Omega\subseteq \bR^{d+1}$. A {\it Harnack chain} is a (finite or infinite) sequence of balls $\{B_{i}\}_{i\in [a,b]}$ where $[a,b]$ denote the integers between $a$ and $b$ and $a$ can be $-\infty$ and $b$ can be $+\infty$, such that for all $i$,
	\begin{enumerate}
		\item $B_{i}\cap B_{i+1}\neq\emptyset$ if $a\leq i< b$, 
		\item $2B_{i}\subseteq \Omega$, and 
		\item $r_{B_{i}} \sim \dist(B_{i},\d\Omega)$. 
	\end{enumerate}
	The {\it length} of a Harnack chain is just the number of balls in the Harnack chain. 
\end{definition}

Note that if $B_{i}$ is a Harnack chain, 
\[
r_{B_{i}}\sim r_{B_{i+1}}.
\]

%
%

To verify that a domain is either uniform or semi-uniform, it will be more convenient to work with equivalent definitions in terms of Harnack chains. 

\begin{theorem}
\label{l:ahmnt}
\cite[Theorem 2.15]{AHMNT17}
A domain $\Omega$ is uniform if and only if it has interior corkscrews and there is a non-decreasing function $N:[1,\infty)\rightarrow [1,\infty)$ so that for all $x,y\in \Omega$, there is a Harnack chain from $x$ to $y$ in $\Omega$ of length $N(|x-y|/\min\{\delta_{\Omega}(x),\delta_{\Omega}(y)\})$. 
\end{theorem}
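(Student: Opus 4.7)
The plan is to treat the two directions separately, with the forward direction (uniform implies corkscrews and Harnack chains) being largely constructive from the cigar curves and the reverse direction requiring more care.

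For the forward direction, I would first address the interior corkscrew condition. Given a ball $B=B(\xi,r)$ centered on $\d\Omega$ with $r<\diam\Omega$, I would fix some auxiliary point $x_{0}\in\Omega$ with $\delta_{\Omega}(x_{0})\geq r$ and $|x_{0}-\xi|\geq 10r$ (whose existence follows by a routine argument once $\Omega$ is nonempty), take a sequence $y_{n}\in\Omega$ with $y_{n}\to\xi$, and apply uniformity to connect $x_{0}$ and $y_{n}$ by a $C$-cigar curve $\gamma_{n}$. Let $z_{n}\in\gamma_{n}$ be the first point traversing from $y_{n}$ at which $|z_{n}-\xi|=r/2$. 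Then both $\ell(z_{n},y_{n})\geq|z_{n}-y_{n}|\gtrsim r$ (for large $n$) and $\ell(z_{n},x_{0})\geq|z_{n}-x_{0}|\gtrsim r$, so the cigar condition forces $\delta_{\Omega}(z_{n})\gtrsim r/C$, giving the desired corkscrew. For the Harnack chain construction, given $x,y\in\Omega$, I would parametrize the cigar curve $\gamma$ from $x$ to $y$ by arc length; the cigar condition gives $\delta_{\Omega}(\gamma(s))\geq\min\{s,\ell(\gamma)-s\}/C$, so $\delta$ grows at least linearly from each endpoint. Picking points $z_{i}$ along $\gamma$ at arc-length positions $s_{i}=(1+c)^{i}\delta_{\Omega}(x)$ (small $c$) until $s_{i}\sim\ell(\gamma)/2$, and then mirroring from the $y$ side, the balls $B_{i}=B(z_{i},\delta_{\Omega}(z_{i})/4)$ form a Harnack chain whose consecutive members overlap, and the count is $\lesssim\log(|x-y|/\min\{\delta_{\Omega}(x),\delta_{\Omega}(y)\})+1$, which defines $N$.

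For the reverse direction, given $x,y\in\Omega$, I would extract a Harnack chain $B_{0},\ldots,B_{N}$ of length $N\leq N(\rho)$ where $\rho=|x-y|/\min\{\delta_{\Omega}(x),\delta_{\Omega}(y)\}$, and form the polygonal curve $\gamma$ connecting consecutive ball centers $z_{i}$. The inclusion $\gamma\subseteq\Omega$ is straightforward: any point on the segment from $z_{i}$ to $z_{i+1}$ lies within $\max\{r_{B_{i}},r_{B_{i+1}}\}$ of one of the centers (using $B_{i}\cap B_{i+1}\neq\emptyset$ and that consecutive radii are comparable via their proportionality to distance to the boundary), hence inside $2B_{i}\cup 2B_{i+1}\subseteq\Omega$. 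The verification of the cigar condition at a point $p\in\gamma$ lying in $B_{i}$ reduces to bounding the arc length from $p$ to the nearer endpoint by a constant multiple of $r_{B_{i}}\sim\delta_{\Omega}(p)$, which follows once the chain's radii are shown to grow and shrink at a controlled geometric rate.

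The hard part will be establishing bounded turning, namely $\ell(\gamma)\leq C|x-y|$. The naive estimate $\ell(\gamma)\leq 2\sum r_{B_{i}}$ combined with the recursion $r_{B_{i}}\leq\delta_{\Omega}(x)+C\sum_{j<i}r_{B_{j}}$ leads to an exponentially large bound in $N(\rho)$, which is too weak to be compared to $|x-y|$ unless $N$ is logarithmic. The resolution is to not use an arbitrary Harnack chain of length $N(\rho)$, but to build one in tandem with the interior corkscrew condition so that the radii follow a controlled dyadic profile: roughly, construct the chain by first ascending in scale from $x$ via corkscrew points for balls $B(\xi_{x},2^{k}\delta_{\Omega}(x))$ (where $\xi_{x}$ is a nearest boundary point of $x$) with at most boundedly many chain balls at each scale, traverse at the top scale $\sim|x-y|$ using at most $O(1)$ balls to reach a corkscrew for the ball of radius $|x-y|$ around the midpoint, and symmetrically descend to $y$. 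The bounded-turning estimate then follows from two geometric series in $k$, each summing to $\lesssim|x-y|$, and the cigar condition at any $p\in\gamma$ follows because $\gamma$ is organized by scale so that the arc length from $p$ to whichever endpoint it is closer to is dominated by the radius at its scale, which is $\sim\delta_{\Omega}(p)$.
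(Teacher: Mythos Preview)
The paper does not actually prove this theorem; it is quoted as \cite[Theorem 2.15]{AHMNT17} and used as a black box. What the paper does contain is a proof sketch of the analogous semi-uniform statement (Theorem \ref{SU}), which explicitly says its reverse direction ``is similar to the proof of \cite[Theorem 2.15]{AHMNT17}'' and then outlines the steps: iterate corkscrew points at dyadic scales $2^{i}B'$, connect consecutive corkscrew points by bounded Harnack chains, and join the ball centers by line segments to obtain the cigar curve of bounded turning. For the forward direction that sketch says to take the cigar curve $\gamma$ and extract a Besicovitch subcover of $\{B(z,\delta_{\Omega}(z)/2):z\in\gamma\}$.

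Your proposal is correct and follows essentially the same architecture. Your reverse direction is exactly the dyadic corkscrew ladder the paper describes (ascend in scale from $x$, traverse at the top scale $\sim|x-y|$, descend to $y$), and you correctly identify that this structure is what buys bounded turning via two geometric series --- the naive chain would not. Your forward-direction Harnack chain via geometric arc-length sampling is a minor variant of the Besicovitch-cover argument; both give the logarithmic length bound. One small quibble: in your corkscrew argument you ask for an auxiliary point $x_{0}$ with $\delta_{\Omega}(x_{0})\geq r$, but this is not needed (and not obviously available before you have corkscrews); your argument only uses $|x_{0}-\xi|\gtrsim r$, which follows directly from $r<\diam\Omega$.
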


One can prove a similar Harnack chain version of semi-uniformity.

\begin{theorem}\label{SU}
A domain $\Omega\subseteq \bR^{d+1}$ is semi-uniform if and only if  it has interior corkscrews and there is $c>0$ and a non-decreasing function $N:[1,\infty)\rightarrow [1,\infty)$ so that, for all $x\in \Omega$, $\xi\in \d\Omega$, and $0<r<\diam \d\Omega$, there is a Harnack chain from $x$ to a $c$-corkscrew point $y\in \Omega\cap B(\xi,r)$ of length $N(|x-y|/\min\{\delta_{\Omega}(x),r\})$. 
\end{theorem}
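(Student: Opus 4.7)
The plan is to mimic the proof of Theorem \ref{l:ahmnt}, replacing point-to-point Harnack chains by chains to scale-$r$ corkscrews and point-to-point cigars by cigars ending at a boundary point. I would prove the two directions separately, each by the standard trick of converting Whitney balls along a curve into a Harnack chain and vice versa.

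For the forward direction, the first step is to extract interior corkscrews from semi-uniformity. Given $\xi\in\d\Omega$ and $0<r<\diam\d\Omega$, choose $\eta\in\d\Omega$ with $|\eta-\xi|>r$, take $x_n\in\Omega$ converging to $\eta$, and apply semi-uniformity to produce cigar curves $\gamma_n$ from $x_n$ to $\xi$ with $\ell(\gamma_n)\leq C|x_n-\xi|$. Choosing $z_n\in\gamma_n$ to be the first point with $|z_n-\xi|\leq r/2$, both sub-arc lengths $\ell(x_n,z_n)$ and $\ell(\xi,z_n)$ exceed $r/2$ for $n$ large, so the cigar inequality gives $\delta_\Omega(z_n)\geq r/(2C)$, yielding a $c$-corkscrew. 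For the Harnack chain, given $x,\xi,r$, apply semi-uniformity to get a cigar $\gamma$ from $x$ to $\xi$, and when $|x-\xi|\geq r$ take $y\in\gamma$ to be the first point with $|y-\xi|\leq r/4$; the cigar inequality forces $\delta_\Omega(y)\gtrsim r$, so $y$ is a corkscrew in $B(\xi,r)$. Cover the sub-arc of $\gamma$ from $x$ to $y$ by Whitney-type balls $B(\gamma(s_i),\delta_\Omega(\gamma(s_i))/2)$ with $s_{i+1}-s_i\sim\delta_\Omega(\gamma(s_i))$: since the cigar inequality forces $\delta_\Omega$ to grow at least linearly away from $x$, the number of balls is $\lesssim\log(|x-\xi|/\min\{\delta_\Omega(x),r\})$, which gives the required $N$. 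In the remaining case $|x-\xi|<r$, first chain $x$ to a corkscrew at scale $|x-\xi|$ as above, then climb through dyadic scales using a second SU-cigar from a deep corkscrew $y^*\in B(\xi,r)$ to $\xi$, along which lie corkscrews at each scale $r_k=2^k|x-\xi|$ up to $r$; each doubling step contributes $O(1)$ Whitney balls.

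For the converse, fix $x\in\Omega$ and $\xi\in\d\Omega$, and set $r_n:=|x-\xi|/2^n$. Define $y_n$ inductively: let $y_0$ be the corkscrew in $B(\xi,|x-\xi|)$ reached from $x$ by the Harnack chain $\Gamma_0$ supplied by the hypothesis, and for each $n\geq 0$ let $y_{n+1}$ be the corkscrew in $B(\xi,r_{n+1})$ reached from $y_n$ by the chain $\Gamma_{n+1}$ supplied by the hypothesis at scale $r_{n+1}$. Since $|y_n-y_{n+1}|\leq 2r_n$ and $\delta_\Omega(y_n)\geq 2cr_n$, each $\Gamma_{n+1}$ has bounded length $N(4)$ and all its Whitney balls sit at distance $\sim r_n$ from $\d\Omega$. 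Concatenating the chains and connecting consecutive Whitney-ball centers by straight segments yields a curve $\gamma$ from $x$ to $\xi$ (taking $\xi$ as the limit of $y_n$). The scale-$r_n$ portion has length $O(r_n)$, so $\ell(\gamma)\lesssim|x-\xi|$ (bounded turning); and for $z\in\gamma$ in the scale-$r_n$ portion, $\ell(\xi,z)=O(r_n)$ and $\delta_\Omega(z)\gtrsim r_n$ together yield the cigar inequality, with the cigar property for $\Gamma_0$ near $x$ following similarly from the consecutive Whitney-ball radii being comparable.

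I expect the main obstacle to be the case $|x-\xi|<r$ in the forward direction: the naive cigar from $x$ to $\xi$ only reaches depth $\sim|x-\xi|$, insufficient to produce a corkscrew at the larger scale $r$, so the dyadic climb along a second cigar from $y^*$ must be spliced carefully into the chain from $x$. In the converse, the subtle point is ensuring the cigar inequality at intermediate points of the concatenated path (not just at the waypoints $y_n$); this reduces to the defining property of Harnack chains that every ball has radius $\sim$ its distance to $\d\Omega$, together with the uniform scaling across each $\Gamma_n$. Both obstacles are technical rather than conceptual, and the overall argument closely mirrors the uniform case.
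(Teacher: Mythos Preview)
Your overall approach matches the paper's: convert cigar curves to Harnack chains via Whitney-ball covers, and conversely concatenate bounded-length Harnack chains at dyadic scales and join their centers to produce a cigar. The forward direction is essentially the paper's sketch, and your identification of the case $|x-\xi|<r$ as the delicate one is correct (the paper's one-line sketch glosses over it too).

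There is, however, a genuine gap in your converse. Your chain $\Gamma_0$ from $x$ to the corkscrew $y_0\in B(\xi,|x-\xi|)$ is supplied by the hypothesis with length $N(|x-y_0|/\min\{\delta_\Omega(x),|x-\xi|\})$. When $\delta_\Omega(x)\ll|x-\xi|$ this is $N(|x-\xi|/\delta_\Omega(x))$, and since $N$ is an \emph{arbitrary} non-decreasing function, the polygonal path through the centers of $\Gamma_0$ can have length $\sum r_i$ far exceeding $|x-\xi|$: nothing prevents the chain from containing many balls of radius $\sim\delta_\Omega(x)$ before reaching depth $\sim|x-\xi|$. Consequently neither the bounded-turning inequality $\ell(\gamma)\lesssim|x-\xi|$ nor the cigar inequality at intermediate points of $\Gamma_0$ follows. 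Your remark that ``consecutive Whitney-ball radii are comparable'' only gives the cigar property at the very first few balls near $x$, not along the whole of $\Gamma_0$.

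The paper fixes this by splitting into two cases. In Case~1 ($\delta_\Omega(x)\geq|x-\xi|/8$) your descent $y_0=x,\,y_1,\,y_2,\dots$ works directly. In Case~2 ($\delta_\Omega(x)<|x-\xi|/8$) the paper first \emph{climbs} from $x$ through corkscrews in the dyadic balls $2^iB''$ where $B''=B(\zeta,2\delta_\Omega(x))$ is centered at the \emph{nearest} boundary point $\zeta$ to $x$ (not at $\xi$). Each climb step uses the hypothesis at a scale comparable to the current depth, so each has length bounded by $N(\text{const})$ and all its balls have radius $\sim 2^i\delta_\Omega(x)$; summing the geometric series gives a curve of length $\lesssim|x-\xi|$ up to a point $y_n$ with $\delta_\Omega(y_n)\sim|x-\xi|$, after which Case~1 applies from $y_n$ to $\xi$. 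You actually deploy exactly this dyadic-climb idea in your \emph{forward} direction; it is needed in the converse as well.
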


\begin{remark}
\label{r:referee}
The condition that $r<\diam \d\Omega$ is important. Note that if $\Omega = \bR^{d+1}\backslash \d \bB\cup B(e_{d+1},\ve)$ where $e_{d+1}$ is the $(d+1)$st standard basis vector and $\ve$ is small, this is an unbounded semi-uniform domain. However, points in $\bB$ are not well connected to corkscrew points outside $\bB$ because of the small $\ve$-hole, that is, the property stated in the previous theorem does not hold if we allow $r\gg \diam \d\Omega$. So in particular, the statement implies that a corkscrew ball in a ball $B$ centered on the boundary can be connected down to a smaller corkscrew ball in $M^{-1}B$ with length depending on $M$ {\it for any $M\geq 1$}, but it can only be connected up to a larger corkscrew ball in $MB$ with length depending on $M$ {\it so long as} $Mr_{B}<\diam \d\Omega$. 
\end{remark}

\begin{proof}
We only sketch the details. For the forward direction, one takes a cigar curve $\gamma$ from $\xi$ to $x$ and then one can show that a Besicovitch subcover of $\{B(z,\delta_{\Omega}(z)/2):z\in \gamma\}$ gives the desired Harnack chain. For the reverse direction, the proof of this is similar to the proof of \cite[Theorem 2.15]{AHMNT17}, but we will outline the initial steps.

Assume we have a function $N$ satisfying the properties in the theorem. Let $x\in \Omega$ and $\xi\in \d\Omega$ (and note that the ball $B(\xi,r)$ could be much larger than $B(\xi,|x-\xi|)$).

\begin{enumerate}
\item If $\delta_{\Omega}(x)\geq |\xi-x|/8$, let $B'=B(\xi,2|x-\xi|)$. Then by iterating, we can find fir each $i\geq 0$ corkscrew points $x_{i}\in 2^{-i}B'$ (with $x_{0}=x$) and Harnack chain $B_{1}^{i},...,B_{n_i}^{i}$ from $x_{i}$ to $x_{i+1}$. If we connect the centers of all the Harnack chains in order by line segments (so we connect the centers of $B_{j}^{i}$ to $B_{j+1}^{i}$ and $B_{n_{i}}^{i}$ to $B_{1}^{i+1}$), one can show this is a cigar curve of bounded turning as in the proof of \cite[Theorem 2.15]{AHMNT17}. 
\item If $\delta_{\Omega}(x)<|\xi-x|/8$, let $\zeta\in \d\Omega$ be closest to $x$ and $B''=B(\zeta,2|x-\xi|)$. Let $n$ be the largest integer for which $2^{n}r_{B''}<r_{B'}/4$. Since 
\[
2r_{B''}=4|\zeta-x|<|\xi-x|/2=r_{B'}/4,\]
we know $n> 0$. Then

\[
r_{2^{n}B''}
=2^{n} r_{B''}
<\frac{r_{B'}}{4}
=\frac{|\xi-x|}{2}
\leq \frac{|\xi-\zeta|+|\zeta-x|}{2}
\leq \frac{\diam \d\Omega}{2} + \frac{r_{B''}}{4}
\]
hence, since $n>0$,
\[
2^{n}r_{B''}<\frac{2^{n}}{2^{n}-1/4}\frac{\diam \d\Omega}{2}< \diam \d\Omega.
\]

Thus, we can apply the condition of the theorem to get that, for $0\leq i\leq n$, there are corkscrew points $y_{i}\in 2^{i}B''$ and a bounded Harnack chain $\tilde{B}_{1}^{i},...,\tilde{B}_{m_{i}}^{i}$ from $y_{i}$ to $y_{i+1}$. If we connect the centers of these balls in order we obtain a curve $\gamma_1$. Note that 
\[
\delta_{\Omega}(y_{n})\gec 2^{n}r_{B''}\sim r_{B'}\sim |x-\xi|
\]
and so just as in the previous case, we can find a curve $\gamma_{2}$ connecting the centers of an infinite Harnack chain from $y_{n}$ to $\xi$. The union of these two curves $\gamma$ can be shown as in the proof of \cite[Theorem 2.15]{AHMNT17} to be cigar curves of bounded turning. 
\end{enumerate}

\end{proof}

\subsection{Background on Harmonic Measure}

For background on harmonic measure and Green's function, we refer the reader to \cite{AG}. 
\begin{definition}
For $K\subset \d\Omega$, we say that $\Omega$ has the {\it capacity density condition (CDC) in $K$} if $ \textup{cap}({B}(x,r) \cap \Omega^c, B(x,2r)) \gtrsim r^{d-1}$, for every $x \in K$ and $r<\diam K$, and that $\Omega$ has the {\it capacity density condition} if it has the CDC in $K=\d\Omega$.  Here, cap$(\cdot, \cdot)$ stands for the variational $2$--capacity of the condenser $(\cdot, \cdot)$ (see \cite[p. 27]{HKM} for the definition).
\end{definition}

\begin{remark}
\label{r:cdc}
This is the traditional definition of CDC, but it also has a geometric formulation. By the main result of \cite{Leh08} (also see \cite[Equation (8)]{Leh08}), $\Omega\subseteq \R^{d+1}$ satisfies the CDC (or is {\it uniformly $2$-fat} in that paper's argot) if there are $c>0$ and $s>d-1$ so that 
\[
\cH^{s}_{\infty}(B(\xi,r)\backslash \Omega)\geq cr^{s} \mbox{ for all }x\in \Omega\mbox{ and }r>0.\]
Below, when we talk about the {\it CDC constants}, we will in fact refer to the constants $s$ and $c$ here. 
\end{remark}

\begin{lemma}[{\cite[Lemma 11.21]{HKM}}]\label{l:bourgain}
Let $\Omega\subset \bR^{d+1}$ be any domain satisfying the CDC condition,  $B$ a ball centered on $\d\Omega$ so that $\Omega\backslash 2B\neq\emptyset$. Then 
\begin{equation}\label{e:bourgain}
\omega_{\Omega}^{x}(2B)\geq c >0 \;\; \mbox{ for all }x\in \Omega\cap B.
\end{equation}
where $c$ depends on $d$ and the constant in the CDC.
\end{lemma}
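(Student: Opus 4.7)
The plan is to execute the classical Bourgain-style lower bound: build a nonnegative subharmonic comparison function on $\Omega$ whose value at $x$ is forced to be $\gtrsim 1$ by the CDC, then dominate it pointwise by $u(y):=\omega_{\Omega}^{y}(2B)$ via the maximum principle. Fix $x\in B\cap\Omega$, let $\xi_{0}$ denote the center of $B$, set $r=r_{B}$, and pick $\eta\in\partial\Omega$ with $\rho:=|x-\eta|=\delta_{\Omega}(x)$. If $\rho\geq r/8$, the conclusion follows from a standard radial subharmonic barrier centered near $\eta$ together with $u\equiv 1$ on $2B\cap\partial\Omega$; so assume $\rho<r/8$, which ensures $B(\eta,4\rho)\subset 2B$ and reduces everything to an argument localized near $\eta$.

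Next I would apply the CDC at $\eta$ with radius $2\rho$ to obtain
\[
\textup{cap}\!\left(\overline{B(\eta,2\rho)}\setminus\Omega,\,B(\eta,4\rho)\right)\;\gtrsim\;\rho^{d-1}.
\]
Call this condenser set $K$ and let $V$ be its capacitary (equilibrium) potential in $B(\eta,4\rho)$, i.e.\ the Newtonian potential of the equilibrium measure $\mu_{K}$ (for $d\geq 2$; replace by the logarithmic potential for $d=1$). Then $V$ is nonnegative, superharmonic in $\mathbb{R}^{d+1}$, harmonic off $K$, equals $1$ quasi-everywhere on $K$, is bounded above by $1$, and vanishes on $\partial B(\eta,4\rho)$. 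Since $|x-y|\leq 3\rho$ for every $y\in K$, a one-line estimate gives
\[
V(x)\;\gtrsim\;\rho^{-(d-1)}\,\mu_{K}(K)\;=\;\rho^{-(d-1)}\,\textup{cap}(K,B(\eta,4\rho))\;\gtrsim\;1.
\]

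Finally, I would compare $V$ and $u$ on the open set $U:=\Omega\cap B(\eta,4\rho)$. On $\partial B(\eta,4\rho)\cap\Omega$ one has $V=0\leq u$. At quasi-every boundary point $z\in K\cap\partial\Omega\subseteq 2B\cap\partial\Omega$, $V(z)\leq 1$ while $u$ attains the boundary value $1$ in the Perron–Wiener–Brelot sense; here the CDC is essential, since by Wiener's criterion every point of $\partial\Omega$ is regular for the Dirichlet problem, so $u$ really achieves its prescribed boundary data q.e. Hence $V-u$ is subharmonic on $U$ with boundary values $\leq 0$ quasi-everywhere, and the maximum principle yields $V\leq u$ on $U$. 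Evaluating at $x$ gives $u(x)\geq V(x)\gtrsim 1$, which is the desired Bourgain estimate.

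The main obstacle is the careful bookkeeping with capacity and irregular boundary points, in particular justifying the boundary comparison for $V-u$ at the non-regular part of $\partial\Omega$; this is precisely what is packaged into \cite[Lemma 11.21]{HKM}, which one can invoke wholesale. The geometric CDC reformulation in Remark \ref{r:cdc} is not actually needed for this proof — only the capacity lower bound is used — but it will be convenient later when the bound is iterated.
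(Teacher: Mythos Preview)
The paper does not prove this lemma; it is quoted as \cite[Lemma 11.21]{HKM} and used as a black box. So there is no ``paper's own proof'' to compare against --- you have supplied a proof where the author simply cites.

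Your argument is the standard Bourgain estimate and is essentially correct, but two points deserve comment. First, the case split on $\rho=\delta_{\Omega}(x)$ is unnecessary and your handling of the case $\rho\geq r/8$ is too vague as written: the phrase ``standard radial subharmonic barrier centered near $\eta$'' does not obviously produce the bound. The clean fix is to drop the split entirely and run the capacitary potential argument at the center $\xi_{0}$ of $B$ with scale $r$: the CDC at $\xi_{0}$ gives $\textup{cap}(\overline{B}\setminus\Omega,\,2B)\gtrsim r^{d-1}$, the corresponding condenser potential $V$ vanishes on $\partial(2B)$, and since $x\in B$ lies at distance $\geq r$ from $\partial(2B)$ and at distance $\leq 2r$ from every point of $K=\overline{B}\setminus\Omega$, the same Green's function lower bound yields $V(x)\gtrsim 1$. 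Second, there is a small inconsistency in your description of $V$: you call it the Newtonian potential of $\mu_{K}$ and simultaneously say it vanishes on $\partial B(\eta,4\rho)$. The object you want is the Green potential $\int G_{B(\eta,4\rho)}(\cdot,y)\,d\mu_{K}(y)$, which is superharmonic only in $B(\eta,4\rho)$, not in all of $\bR^{d+1}$; this is what actually vanishes on the outer boundary and what makes the maximum principle comparison on $U=\Omega\cap B(\eta,4\rho)$ clean. Your pointwise lower bound $V(x)\gtrsim \rho^{-(d-1)}\mu_{K}(K)$ then comes from the estimate $G_{B(\eta,4\rho)}(x,y)\gtrsim |x-y|^{-(d-1)}$ valid for $x,y$ well inside the ball.

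With those adjustments the proof is complete. Your closing remark is exactly right: the delicate part is the boundary comparison at possibly irregular points, and this is precisely what the CDC (via Wiener's criterion) guarantees --- which is why the author is content to cite \cite{HKM} rather than reproduce the details.
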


Using the previous lemma and iterating, it is possible to obtain the following lemma.

\begin{lemma}\label{l:holder}
Let $\Omega\subseteq \bR^{d+1}$ be a domain with the CDC, $\xi\in \d\Omega$ and $0<r<\diam \d\Omega/2$. Suppose $u$ is a non-negative function that is harmonic in $B(\xi,r)\cap \Omega$ and vanishes continuously on $\d\Omega\cap B(\xi,r)$. Then
\begin{equation}\label{e:holder}
u(x) \lec  \ps{\sup_{y\in B(\xi,r)\cap \Omega} u} \ps{\frac{|x-\xi|}{r}}^{\alpha}
\end{equation}
where $\alpha>0$ depends on the CDC constant and $d$. 
\end{lemma}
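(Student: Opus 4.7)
The plan is the standard dyadic iteration of Lemma \ref{l:bourgain} that yields boundary H\"older regularity. I would show that $M_n := \sup_{\Omega \cap B(\xi, r/2^n)} u$ decays geometrically in $n$, and then unwind this into the bound \eqref{e:holder}.

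Write $r_n = r/2^n$, $B_n = B(\xi, r_n)$, $\Omega_n = \Omega \cap B_n$. First I would observe that each $\Omega_n$ inherits the CDC with constants depending only on those of $\Omega$: at points of $\d\Omega \cap \overline{B_n}$ one has $\Omega_n^c \supseteq \Omega^c$, so the capacity only grows, and at points of $\d B_n \cap \overline{\Omega}$ the complement $\Omega_n^c$ contains the exterior of the ball, which trivially satisfies the CDC at scales up to $r_n$. Then I would apply Lemma \ref{l:bourgain} to the domain $\Omega_n$ with the ball $B_{n+2}$ centered on $\xi \in \d\Omega_n$ (so that $2B_{n+2} = B_{n+1}$); the hypothesis $\Omega_n \setminus 2B_{n+2} \neq \emptyset$ follows from $r_{n+1} < \diam \d\Omega / 2$ by a short geometric argument. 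Since $B_{n+1}$ is compactly contained in $B_n$, we have $\d\Omega_n \cap B_{n+1} = \d\Omega \cap B_{n+1}$, and Bourgain yields
\[
\omega^x_{\Omega_n}(\d\Omega \cap B_{n+1}) \geq c > 0
\]
for all $x \in \Omega \cap B_{n+2}$, where $c$ depends only on the CDC constants and $d$.

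Since $u$ is harmonic in $\Omega_n$, bounded by $M_n$, and vanishes continuously on $\d\Omega \cap B_n$, the maximum principle applied to $M_n - u \geq 0$ in $\Omega_n$ gives, for $x \in \Omega \cap B_{n+2}$,
\[
u(x) \leq M_n\bigl(1 - \omega^x_{\Omega_n}(\d\Omega \cap B_n)\bigr) \leq (1 - c)M_n,
\]
whence $M_{n+2} \leq (1-c)M_n$. Iterating gives $M_n \leq (1-c)^{\lfloor n/2 \rfloor} M_0$, with $M_0 = \sup_{\Omega \cap B(\xi,r)} u$. For $x \in \Omega \cap B(\xi,r)$ with $\rho := |x-\xi|$, choosing $n$ maximal with $\rho \leq r_n$ (the case $\rho > r/2$ being immediate) yields $u(x) \lec M_0 (\rho/r)^\alpha$ with $\alpha = -\tfrac{1}{2}\log_2(1-c) > 0$.

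The main (mild) obstacle is verifying that the restricted domain $\Omega_n$ satisfies the CDC uniformly in $n$ so that Lemma \ref{l:bourgain} is legitimately applicable to it, and aligning the dyadic shift correctly so that the auxiliary Bourgain ball fits strictly inside $B_n$; this is why the decay factor $(1-c)$ is gained every two scales rather than every scale. The hypothesis $r < \diam \d\Omega / 2$ is exactly what is needed to ensure that there is enough boundary remaining outside each shrunken ball for Bourgain's lemma to apply at every level of the iteration.
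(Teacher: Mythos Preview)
Your proposal is correct and is exactly the approach the paper has in mind: the paper does not actually give a proof of this lemma, only the sentence ``Using the previous lemma and iterating, it is possible to obtain the following lemma,'' and your argument is precisely that standard iteration of Lemma~\ref{l:bourgain}. The care you take with the auxiliary domain $\Omega_n$ and the two-scale shift so that $2B_{n+2}=B_{n+1}\Subset B_n$ is the right way to make the one-line hint rigorous.
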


%


There are two key facts we will use about Green's function.

\begin{lemma}
\cite[Lemma 1]{Aik08}
For $x\in \Omega\subseteq \bR^{d+1}$ and $\phi\in C_{c}^{\infty}(\bR^{d+1})$, 
\begin{equation}
\label{e:ibp}
\int \phi\omega_{\Omega}^{x} = \int_{\Omega} \triangle \phi(y) G_{\Omega}(x,y)dy.
\end{equation}
\end{lemma}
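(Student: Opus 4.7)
The plan is to derive the identity from Green's second identity applied to the pair $(G_\Omega(x,\cdot), \phi)$ on a smooth exhaustion of $\Omega$, and then to pass to the limit using standard potential-theoretic stability results. The underlying principle is that $G_\Omega(x,\cdot)$ is the distributional inverse of $-\triangle$ on $\Omega$ with Dirichlet boundary conditions, while $\omega^x_\Omega$ records the boundary trace of this operator; the identity is the integration-by-parts statement encoding this duality.

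Concretely, I would fix $x\in\Omega$ and $\phi\in C_c^\infty(\bR^{d+1})$ and choose an increasing sequence $\Omega_k\nearrow \Omega$ of smooth subdomains with $x\in\Omega_1$. On each smooth $\Omega_k$ the Green's function $G_k := G_{\Omega_k}$ is classical: $-\triangle_y G_k(x,\cdot)=\delta_x$ distributionally, $G_k(x,\cdot)$ vanishes on $\partial\Omega_k$, and harmonic measure is given by the normal derivative $d\omega_k^x = -\partial_\nu G_k(x,\cdot)\,d\sigma$. Applying Green's second identity to $(\phi, G_k(x,\cdot))$ on $\Omega_k\setminus B(x,\varepsilon)$ and letting $\varepsilon\to 0$ isolates the Dirac contribution at $x$ and yields the analog of the desired identity on $\Omega_k$, with the signs determined by the normalization convention for $G_\Omega$.

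Next I would pass to the limit $k\to\infty$. Standard potential-theoretic facts give $G_k(x,\cdot)\uparrow G_\Omega(x,\cdot)$ monotonically and $\omega_k^x\rightharpoonup \omega_\Omega^x$ weakly as measures on $\overline{\Omega}$. The left-hand side converges by weak convergence of measures against the continuous, compactly supported function $\phi$. The right-hand side converges by monotone convergence, using that $\triangle\phi$ is bounded and compactly supported and that $G_\Omega(x,\cdot)$ is locally integrable (its singularity at $x$ is of order $|y-x|^{1-d}$ for $d\geq 2$, or logarithmic for $d=1$, which is integrable in dimension $d+1\geq 2$).

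The main obstacle is justifying the weak convergence $\omega_k^x\rightharpoonup\omega_\Omega^x$ when $\partial\Omega$ is rough: irregular boundary points could in principle carry harmonic measure in the approximations that is lost in the limit. Under the CDC every boundary point is regular for the Dirichlet problem by Wiener's criterion, so that Perron solutions converge continuously to the Perron solution on $\Omega$ and the weak convergence is standard; alternatively one can bypass this by interpreting the identity directly via the Riesz representation of the superharmonic function $G_\Omega(x,\cdot)$, whose associated Riesz measure on $\partial\Omega$ is precisely $\omega_\Omega^x$.
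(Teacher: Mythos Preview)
The paper does not prove this lemma; it simply quotes it from \cite[Lemma 1]{Aik08} and uses it as a black box. So there is no in-paper argument to compare against, and your task was really to supply a proof where the author chose to cite one.

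Your exhaustion-plus-Green's-identity approach is standard and correct in outline. Two small points are worth tightening. First, monotone convergence does not apply directly to $\int_{\Omega_k}\triangle\phi(y)\,G_k(x,y)\,dy$ because $\triangle\phi$ changes sign; use dominated convergence instead, with $|\triangle\phi|\,G_\Omega(x,\cdot)\in L^1$ as the majorant (valid since $0\le G_k\le G_\Omega$ and $G_\Omega(x,\cdot)$ is locally integrable). Second, the weak convergence $\omega_{\Omega_k}^x\rightharpoonup\omega_\Omega^x$ along an increasing exhaustion is the statement that Perron solutions $H_{\Omega_k}\phi(x)\to H_\Omega\phi(x)$; this is classical for continuous boundary data and holds without any regularity hypothesis on $\partial\Omega$, so you do not actually need the CDC here. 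Your alternative via the Riesz decomposition---observing that the extension of $G_\Omega(x,\cdot)$ by zero is subharmonic on $\bR^{d+1}\setminus\{x\}$ with distributional Laplacian $-\delta_x+\omega_\Omega^x$---is the cleanest route and is essentially how this identity is usually packaged in the potential-theory literature (e.g.\ \cite{AG}).
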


\begin{lemma}
Let $\Omega\subset\bR^{d+1}$ be a CDC domain. Let $B$ be a ball centered on $\d\Omega$ and $0<r_{B}<\diam \d\Omega $. Then,
 \begin{equation}\label{w>G}
 \omega^{x}(4B)\gtrsim r_{B}^{d-1}\, G_{\Omega}(x,y)\quad\mbox{
 for all $x\in \Omega\backslash  2B$ and $y\in B\cap\Omega$,}
 \end{equation}
\end{lemma}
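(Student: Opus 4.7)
The plan is to prove the inequality by a maximum-principle comparison of the two non-negative harmonic functions $u(x) := r_B^{d-1}\, G_\Omega(x,y)$ and $v(x) := \omega^x_\Omega(4B)$ on the sub-domain $\Omega\setminus \overline{2B}$. Note that both are harmonic there, since the pole $y$ lies in $B\subseteq 2B$ and $4B$ does not meet the boundary of this sub-domain in a way that spoils harmonicity of $v$. Fix $y\in B\cap \Omega$ throughout. My strategy is to control $u-Cv$ on the three ``boundary components'' of $\Omega\setminus \overline{2B}$, namely $\d(2B)\cap \Omega$, $\d\Omega \setminus \overline{2B}$, and (if needed) infinity.

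First I would establish a pointwise bound $G_\Omega(x,y)\lec r_B^{1-d}$ for $x\in \d(2B)\cap \Omega$. Since $y\in B$ and $x\notin 2B$ forces $|x-y|\geq r_B$, this follows from the universal majorant $G_\Omega(x,y)\leq G_{\bR^{d+1}}(x,y)\lec |x-y|^{1-d}$ when $d\geq 2$, with the usual logarithmic variant handled in the same way when $d+1=2$. Next I would bound $\omega^x_\Omega(4B)$ from below on the same set: by Lemma \ref{l:bourgain} applied to the ball $2B$ (whose double is $4B$), one has $\omega^x_\Omega(4B)\geq c>0$ for every $x\in \Omega\cap 2B$, and in particular for $x\in \d(2B)\cap \Omega$. (If $\Omega\setminus 4B=\emptyset$, the inequality becomes trivial since $\omega^x(4B)\equiv 1$ there and $G_\Omega(x,y)\lec r_B^{1-d}$ in the whole exterior region.) Combining these two estimates gives a constant $C_1$ such that on $\d(2B)\cap \Omega$,
\[
r_B^{d-1} G_\Omega(x,y) \;\leq\; C_1\, \omega^x_\Omega(4B).
\]

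On the remaining part of the boundary, $\d\Omega\setminus \overline{2B}$, both functions vanish: $G_\Omega(\cdot,y)$ vanishes continuously by the CDC Hölder estimate (Lemma \ref{l:holder}), while $\omega^x(4B)\geq 0$. Hence the auxiliary function $h(x) := r_B^{d-1} G_\Omega(x,y) - C_1\omega^x_\Omega(4B)$ is $\leq 0$ on the boundary of $\Omega\setminus \overline{2B}$. It is harmonic in $\Omega\setminus \overline{2B}$, so the maximum principle gives $h\leq 0$ throughout, which is exactly the claimed inequality \eqref{w>G}. The main delicate point is the behaviour at infinity if $\Omega$ is unbounded: in dimension $d+1\geq 3$ both $G_\Omega(x,y)$ and $\omega^x_\Omega(4B)$ tend to zero as $|x|\to\infty$ so the maximum principle applies unchanged, while in the planar case ($d=1$) the Green's function can grow logarithmically and one must invoke the assumption $r_B<\diam\d\Omega$ to localise the argument (for instance by further excluding a large ball containing $4B$ and using Lemma \ref{l:holder} on its boundary), which is the only truly technical step in the proof.
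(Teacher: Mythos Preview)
Your proof is correct and follows essentially the same approach as the paper, which merely sketches: ``This follows quickly from the maximum principle, Lemma~\ref{l:bourgain}, and the fact that, for $x\in \d 2B\cap \Omega$ and $y\in B$, $r_{B}^{d-1}G_{\Omega}(x,y)\lec 1$,'' referring to \cite[Lemma 3.5]{AH08} and \cite[Lemma 3.3]{AHMMMTV16} for details. Your write-up supplies exactly these ingredients and handles the boundary/infinity technicalities the paper leaves implicit.
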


This follows quickly from the maximum principle, Lemma \ref{l:bourgain}, and the fact that, for $x\in \d 2B\cap \Omega$ and $y\in B$, $r_{B}^{d-1}G_{\Omega}(x,y)\lec 1$. For proofs, see \cite[Lemma 3.5]{AH08} or \cite[Lemma 3.3]{AHMMMTV16}.

Some of the proofs below will use compactness arguments via the following lemma from \cite{AMT17}. 

\begin{lemma}\label{limlem} \cite[Lemma 2.9]{AMT17}
Let $\Omega_{j}\subseteq \bR^{d+1}$ be a sequence of CDC domains with the same CDC constants (as in Remark \ref{r:cdc}) such that $0\in \d\Omega_{j}$, $\inf\diam \d\Omega_{j}>0$, and there is a ball $B(x_{0},r)\subseteq \Omega_{j}$ for all $j$. Then there is a connected open set $\Omega_{\infty}^{x_{0}}$ containing $B(x_{0},r)$ so that, after passing to a subsequence,
\begin{enumerate}
\item $G_{\Omega_{j}}(x_{0},\cdot)$ converges uniformly to $G_{\Omega_{\infty}^{x_{0}}}(x_{0},\cdot)$ on compact subsets of $\{x_{0}\}^{c}$,
\item   $\omega_{\Omega_{j}}^{x_{0}}\warrow \omega_{\Omega_{\infty}^{x_{0}}}^{x_{0}}$, and
\item  $\Omega_{\infty}^{x_{0}}$ has the CDC with the same constants. 
\end{enumerate}
\end{lemma}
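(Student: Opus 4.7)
The plan is to construct $\Omega_{\infty}^{x_0}$ as the region where the limiting Green's function is positive, and then read off weak convergence of harmonic measures from the integration-by-parts identity \eqref{e:ibp}. First I would extend each $G_{j}:=G_{\Omega_{j}}(x_{0},\cdot)$ by zero outside $\Omega_{j}$. Away from $x_{0}$, each $G_{j}$ is dominated by the standard fundamental solution with pole at $x_{0}$ (which depends only on $d$), while the common CDC constants plus \Lemma{holder} applied to $G_{j}$ near $\partial\Omega_{j}$ give a \emph{uniform} $\alpha$-H\"older modulus up to the boundary. Interior gradient estimates for harmonic functions then upgrade this to equicontinuity of $\{G_{j}\}$ on every compact set in $\bR^{d+1}\setminus\{x_{0}\}$. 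By Arzel\`a--Ascoli and a diagonal subsequence, $G_{j}\to G_{\infty}$ locally uniformly on $\bR^{d+1}\setminus\{x_{0}\}$.

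Next, set $U:=\{G_{\infty}>0\}$ (open, since $G_{\infty}$ is continuous away from $x_{0}$) and let $\Omega_{\infty}^{x_{0}}$ be the connected component of $U\cup\{x_{0}\}$ containing $B(x_{0},r)$; note that $B(x_{0},r)\subseteq U$ follows from a Harnack-type lower bound on $G_{j}$ in $B(x_{0},r)$, uniform in $j$. On $\Omega_{\infty}^{x_{0}}$, $G_{\infty}$ is harmonic, vanishes on $\partial\Omega_{\infty}^{x_{0}}$ (by continuity and definition of $U$), and inherits the pole singularity at $x_{0}$ from the $G_{j}$'s, so it coincides with $G_{\Omega_{\infty}^{x_{0}}}(x_{0},\cdot)$; this gives item (1).

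For (2), apply \eqref{e:ibp} to $\phi\in C_{c}^{\infty}(\bR^{d+1})$:
\[
\int \phi\, d\omega_{\Omega_{j}}^{x_{0}}=\int \triangle \phi(y)\, G_{j}(x_{0},y)\, dy.
\]
The uniform bound $G_{j}(x_{0},y)\lec |y-x_{0}|^{1-d}$ (locally integrable near $x_{0}$) plus the compact support of $\phi$ let dominated convergence pass the limit through to the corresponding identity for $\Omega_{\infty}^{x_{0}}$, yielding weak convergence against $C_{c}^{\infty}$; a density argument extends this to $C_{c}(\bR^{d+1})$, which is precisely weak-$*$ convergence since harmonic measure is tight on bounded sets (using \Lemma{bourgain} to rule out escape of mass).

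For (3), I would use the geometric form of the CDC in \Remark{cdc}: $\cH^{s}_{\infty}(B(\xi,\rho)\setminus\Omega_{j})\geq c\rho^{s}$ for $\xi\in\partial\Omega_{j}$. Any $\xi\in\partial\Omega_{\infty}^{x_{0}}$ is a limit of boundary points $\xi_{j}\in\partial\Omega_{j}$: if not, $G_{\infty}$ would be harmonic and positive in a neighborhood of $\xi$ by stability of the limit, contradicting $\xi\in\partial U$. Combined with the fact that $B(\xi,\rho)\setminus\Omega_{\infty}^{x_{0}}$ contains the Kuratowski $\liminf$ of $B(\xi_{j},\rho)\setminus\Omega_{j}$, lower semicontinuity of Hausdorff content under such convergence transfers the bound to $\Omega_{\infty}^{x_{0}}$ with the same constants.

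The main obstacle is the bookkeeping in the second step: confirming that $G_{\infty}$ truly is a Green's function on $\Omega_{\infty}^{x_{0}}$ rather than a superharmonic envelope, that $\Omega_{\infty}^{x_{0}}$ contains no stray components on which $G_{\infty}$ degenerates, and most delicately, that $\partial\Omega_{\infty}^{x_{0}}$ is well-approximated by $\partial\Omega_{j}$ uniformly on compact sets — the uniform H\"older/CDC estimate is what prevents boundary points of $\Omega_{j}$ from drifting into the interior of $\Omega_{\infty}^{x_{0}}$ or vice-versa, and is the linchpin that allows the CDC constants to be preserved in the limit.
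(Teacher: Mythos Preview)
The paper does not supply its own proof of this lemma; it is quoted verbatim from \cite[Lemma 2.9]{AMT17} and used as a black box. So there is nothing to compare against here. That said, your sketch is precisely the standard argument one finds in that reference: extend the Green functions by zero, use the uniform CDC H\"older estimate (\Lemma{holder}) together with interior gradient bounds to get equicontinuity on compacta away from the pole, extract a locally uniform limit $G_{\infty}$ via Arzel\`a--Ascoli and a diagonal subsequence, define $\Omega_{\infty}^{x_{0}}$ as the component of $\{G_{\infty}>0\}\cup\{x_{0}\}$ containing $x_{0}$, and then read off weak convergence of the harmonic measures from \eqref{e:ibp} via dominated convergence.

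Two small remarks on your write-up. First, in step (3) the correct statement is \emph{upper} semicontinuity of Hausdorff content with respect to Hausdorff convergence of compacta: if $K_{j}\to K$ in Hausdorff distance then $\cH^{s}_{\infty}(K)\geq \limsup_{j}\cH^{s}_{\infty}(K_{j})$, which is exactly the inequality needed to push the lower bound $c\rho^{s}$ to the limit set. To apply it, one passes to a further subsequence so that the compact sets $\cnj{B(\xi_{j},\rho')}\setminus\Omega_{j}$ converge to some compactum $K$; then $G_{\infty}\equiv 0$ on $K$ by uniform convergence, so $K\subseteq (\Omega_{\infty}^{x_{0}})^{c}$, and the content bound follows. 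Second, the identification of $G_{\infty}$ with the Green function of $\Omega_{\infty}^{x_{0}}$ (rather than, say, a nontrivial superharmonic function vanishing on a polar subset) is where the CDC is used again: it guarantees every boundary point of $\Omega_{\infty}^{x_{0}}$ is regular, so the unique bounded harmonic function with the correct pole and zero boundary data is the Green function. You flagged this as the delicate point, and it is; the uniform CDC is exactly what makes it go through.
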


We'll need an additional two lemmas building off of this one.

\begin{lemma}
With the assumptions of Lemma \ref{limlem}, if $x\in \Omega_{\infty}^{x_{0}}$, then we may pass to a further subsequence so that the same conclusions hold with $x$ in place of $x_{0}$ and $\Omega_{\infty}^{x_{0}}=\Omega_{\infty}^{x}$. In particular, $\omega_{\Omega_{\infty}^{x_{0}}}^{x_{0}}=\omega_{\Omega_{\infty}^{x_{0}}}^{x}$. 
\end{lemma}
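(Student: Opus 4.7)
The plan is to apply Lemma \ref{limlem} twice --- first with pole $x_{0}$, then with pole $x$ --- and afterwards to match the two resulting limit domains. The second application requires knowing that $x$ is eventually an interior point of the $\Omega_{j}$'s with some uniform distance to their boundaries, which I will extract from the Green's function convergence already in hand.

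After applying Lemma \ref{limlem} with pole $x_{0}$, I have a subsequence (still denoted $\Omega_{j}$) on which $G_{\Omega_{j}}(x_{0},\cdot)\to G_{\Omega_{\infty}^{x_{0}}}(x_{0},\cdot)$ uniformly on compacta of $\{x_{0}\}^{c}$, together with an open connected set $\Omega_{\infty}^{x_{0}}\ni x_{0},x$. I would choose a piecewise linear arc $\gamma\subseteq \Omega_{\infty}^{x_{0}}$ joining $x_{0}$ to $x$ and a radius $\rho>0$ so that the closed tubular neighborhood $\overline{N_{\rho}(\gamma)}$ is a compact subset of $\Omega_{\infty}^{x_{0}}\setminus\{x_{0}\}$. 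Since $G_{\Omega_{\infty}^{x_{0}}}(x_{0},\cdot)$ is continuous and strictly positive on this compact set, uniform convergence forces $G_{\Omega_{j}}(x_{0},\cdot)>0$ on $\overline{N_{\rho/2}(\gamma)}$ for all large $j$; because the Green's function vanishes off the component of $\Omega_{j}$ containing $x_{0}$, this gives $\overline{N_{\rho/2}(\gamma)}\subseteq \Omega_{j}$, and in particular $B(x,\rho/2)\subseteq \Omega_{j}$. I can then apply Lemma \ref{limlem} a second time with pole $x$ and interior ball $B(x,\rho/2)$, extracting a further subsequence, a connected open set $\Omega_{\infty}^{x}\supseteq B(x,\rho/2)$, and the three convergence conclusions with $x$ replacing $x_{0}$.

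The main obstacle is establishing $\Omega_{\infty}^{x}=\Omega_{\infty}^{x_{0}}$. I would argue both inclusions by repeating the Green's function / positivity trick above symmetrically. Given $y\in \Omega_{\infty}^{x}$, pick a path $\sigma\subseteq \Omega_{\infty}^{x}$ from $x$ to $y$ with a small tubular neighborhood compactly contained in $\Omega_{\infty}^{x}\setminus\{x\}$; uniform convergence of $G_{\Omega_{j}}(x,\cdot)$ places this neighborhood inside the $x$-component of $\Omega_{j}$ for large $j$, which by the first step is the same component as $x_{0}$. Standard Harnack-chain bounds for $G_{\Omega_{j}}(x_{0},\cdot)$ along this compact neighborhood then pass to the limit (using the convergence for pole $x_{0}$) and produce a positive lower bound for $G_{\Omega_{\infty}^{x_{0}}}(x_{0},y)$, forcing $y\in \Omega_{\infty}^{x_{0}}$. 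The reverse inclusion is symmetric, using that $x_{0}\in \Omega_{\infty}^{x}$ by the same neighborhood-of-$\gamma$ argument run from the pole $x$. Once the domains are identified, the three conclusions of the second application are conclusions about $\Omega_{\infty}^{x_{0}}$ with pole $x$, which is exactly the content of the final assertion of the lemma.
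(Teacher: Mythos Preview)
Your approach is essentially the same as the paper's: both apply Lemma \ref{limlem} a second time with pole $x$ and then identify the two limit domains by transporting positivity of Green's functions along curves in $\Omega_{\infty}^{x_{0}}$ that eventually lie in $\Omega_{j}$, using Harnack's inequality. The paper packages this as a single comparison $G_{\Omega_{j}}(x_{0},y)\sim_{y}G_{\Omega_{j}}(x,y)$ (via symmetry of the Green's function and Harnack along a curve from $x$ to $x_{0}$ avoiding $y$), while you argue the two inclusions separately; the underlying mechanism is identical. One small point: your $\overline{N_{\rho}(\gamma)}$ cannot literally sit in $\Omega_{\infty}^{x_{0}}\setminus\{x_{0}\}$ since $x_{0}\in\gamma$, but this is harmless because $G_{\Omega_{j}}(x_{0},\cdot)$ blows up near its pole, so positivity there is automatic --- just excise a small ball around $x_{0}$ from the neighborhood. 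It is also worth noting that your direct Harnack argument placing $x_{0}$ into $\Omega_{\infty}^{x}$ sidesteps the paper's endgame, which instead deduces $\Omega_{\infty}^{x_{0}}=\Omega_{\infty}^{x}\cup\{x_{0}\}$ and then invokes the CDC to rule out $x_{0}$ being an isolated boundary point of $\Omega_{\infty}^{x}$.
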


\begin{proof}
When passing to the subsequence in Lemma \ref{limlem}, we can pass to another subsequence so that $G_{\Omega_{j}}(x,\cdot)$ converges on compact subsets of $\{x\}^{c}$ to $G_{\Omega_{\infty}^{x}}(x,\cdot)$. For $y \in \Omega_{\infty}^{x_{0}} \backslash \{x_{0},x\}$, a small ball around $y$ is contained in $\Omega_{j}$ for all $j$ large (since $G_{\Omega_{\infty}^{x_{0}}}(x_{0},\cdot)$ is bounded away from zero on this ball, hence so is $G_{\Omega_{j}}(x_{0},\cdot)$ for large $j$).
Let $\gamma\subseteq \Omega_{\infty}^{x_{0}}\backslash\{y\}$ be a curve from $x$ to $x_{0}$. Then $\gamma \subseteq \Omega_{j}$ for sufficiently large $j$ (this is because $G_{\Omega_{\infty}^{x_{0}}}(x_{0},\cdot)\geq c>0$ on $\gamma$ for some $c>0$, and thus $G_{\Omega_{j}}(x_{0},\cdot)\geq c/2>0$ for $j$ large). Thus, by Harnack's inequality used on a chain of balls along $\gamma$ whose doubles don't contain $y$,
\[
G_{\Omega_{j}}(x_{0},y)\sim_{y} G_{\Omega_{j}}(x,y)
\]
In particular,
\[
G_{\Omega_{\infty}^{x_{0}}}(x_{0},y)\sim_{y} G_{\Omega_{\infty}^{x}}(x,y).
\]
and so
\begin{align*}
\Omega_{\infty}^{x_{0}}\backslash\{x,x_{0}\} 
& =\{y\neq x,x_{0}: G_{\Omega_{\infty}}^{x_{0}}(x_{0},y)>0\}\\
& =\{y\neq x,x_{0}: G_{\Omega_{\infty}}^{x}(x,y)>0\}\\
& =\Omega_{\infty}^{x}\backslash\{x,x_{0}\}.
\end{align*}

Thus, adding back $x$ and $x_{0}$, we get $\Omega_{\infty}^{x_{0}}=\Omega_{\infty}^{x}\cup \{x_{0}\}$. Since the former set is open, so must the latter set, and this can only be if $x_{0}\in \Omega_{\infty}^{x}$. Indeed, if $x_{0}\not\in \Omega_{\infty}^{x}$, then since $\Omega_{\infty}^{x}\cup \{x_{0}\}$ is open, there is $\ve>0$ so that $B(x_{0},\ve)\subseteq \Omega_{\infty}^{x}\cup \{x_{0}\}$, and so $B(x_{0},\ve)\backslash \{x_{0}\}\subseteq \Omega_{\infty}^{x}$. Thus, $x_{0}$ is an isolated point of $\d\Omega_{\infty}^{x}$, but this is impossible since domains with the CDC have no isolated points in their boundary. This proves the lemma.
\end{proof}

\begin{lemma}
\label{l:doublinglimit}
Under the conditions of Lemma \ref{limlem}, if $\omega_{\Omega_{j}}^{x_{0}}$ satisfies \eqref{doubling}, then so does $\omega_{\Omega_{\infty}}^{x_{0}}$ with the same constants.
\end{lemma}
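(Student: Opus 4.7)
The plan is to deduce doubling for $\omega_{\Omega_\infty}^{x_0}$ from the weak convergence $\omega_{\Omega_j}^{x_0}\warrow\omega_{\Omega_\infty}^{x_0}$ (from Lemma \ref{limlem}) by comparing each relevant $\omega_{\Omega_\infty}^{x_0}$-ball measure to $\omega_{\Omega_j}^{x_0}$-measures of slightly perturbed balls centered on $\d\Omega_j$, where we can then apply the doubling hypothesis. First I would fix a ball $B=B(\xi,r)$ centered on $\d\Omega_\infty$ and $\alpha>0$ with $\dist(x_0,AB\cap\d\Omega_\infty)\geq\alpha r$, and locate points $\xi_j\in\d\Omega_j$ with $\xi_j\to\xi$. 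The latter follows from the uniform convergence of Green's functions and the CDC for each $\Omega_j$: a point $y^+\in\Omega_\infty$ near $\xi$ lies in $\Omega_j$ for $j$ large (since $G_{\Omega_j}(x_0,y^+)\to G_{\Omega_\infty}(x_0,y^+)>0$), while a point $y^-\in\complement\Omega_\infty$ near $\xi$ has $G_{\Omega_j}(x_0,y^-)\to 0$, so any continuum in $\Omega_j$ joining $y^+$ to a point near $y^-$ must hit $\d\Omega_j$ near $\xi$.

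Next, for $\epsilon>0$ small I would pick continuous cutoffs $\phi,\psi:\R^{d+1}\to[0,1]$ with $\phi=1$ on $\overline{2B}$, $\phi=0$ outside $2(1+\epsilon)B$, and $\psi=1$ on $(1-\epsilon)B$, $\psi=0$ outside $B$. Weak convergence yields
\[
\omega_{\Omega_\infty}^{x_0}(2B)\leq \lim_j\int\phi\,d\omega_{\Omega_j}^{x_0}\leq \liminf_j\omega_{\Omega_j}^{x_0}\bigl(2(1+\epsilon)B\bigr),
\]
\[
\omega_{\Omega_\infty}^{x_0}(B)\geq \lim_j\int\psi\,d\omega_{\Omega_j}^{x_0}\geq \limsup_j\omega_{\Omega_j}^{x_0}\bigl((1-\epsilon)B\bigr).
\]
For $j$ large so that $|\xi_j-\xi|<\epsilon r$, we have the inclusions $2(1+\epsilon)B\subseteq 2B(\xi_j,(1+2\epsilon)r)$ and $B(\xi_j,(1-2\epsilon)r)\subseteq(1-\epsilon)B$, and since the radii differ by a factor less than $4$, a bounded number of applications of the doubling hypothesis for $\omega_{\Omega_j}^{x_0}$ (at radii near $r$) lets us bound $\omega_{\Omega_j}^{x_0}(2B(\xi_j,(1+2\epsilon)r))$ by a constant times $\omega_{\Omega_j}^{x_0}(B(\xi_j,(1-2\epsilon)r))$.

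The technically delicate step is to verify that the distance hypothesis $\dist(x_0,AB\cap\d\Omega_\infty)\geq\alpha r$ implies $\dist(x_0,AB(\xi_j,r')\cap\d\Omega_j)\geq\alpha' r'$ for $j$ large, where $r'$ is comparable to $r$ and $\alpha'$ is slightly less than $\alpha$. I would argue by contradiction: if a subsequence $y_j\in AB(\xi_j,r')\cap\d\Omega_j$ satisfies $|y_j-x_0|<\alpha' r'$, then by compactness $y_j\to y_*$, and because $G_{\Omega_j}(x_0,y_j)=0$ and $G_{\Omega_j}\to G_{\Omega_\infty}$ uniformly on compact subsets of $\{x_0\}^c$, one has $G_{\Omega_\infty}(x_0,y_*)=0$, so $y_*\in\complement\Omega_\infty$. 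The line segment from $y_*$ to $x_0\in\Omega_\infty$ must cross $\d\Omega_\infty$ at some point $z_*$, and careful bookkeeping using $|\xi_j-\xi|<\epsilon r$ places $z_*$ inside $AB\cap\d\Omega_\infty$ while $|z_*-x_0|\leq|y_*-x_0|<\alpha r$, contradicting the hypothesis.

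Combining everything and letting $\epsilon\to 0$ gives $\omega_{\Omega_\infty}^{x_0}(2B)\leq C(\alpha)\omega_{\Omega_\infty}^{x_0}(B)$ with constants controlled by the original $A$ and $C$. I expect the main obstacle to be the fifth step above, namely the careful handling of how $\d\Omega_j$ clusters near $\d\Omega_\infty$ within $AB$ so that the distance condition passes to the limit with constants that remain (asymptotically) unchanged; the rest is a routine application of weak convergence and iterated doubling.
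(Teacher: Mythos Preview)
Your overall strategy is the same as the paper's: use weak convergence of $\omega_{\Omega_j}^{x_0}$ together with balls $B_j=B(\xi_j,r')$ centered on $\d\Omega_j$ (with $\xi_j\to x_B$) to transfer the doubling inequality. The paper does exactly this, introducing nested scales $0<r<s<t<1$ so that $2rB\subseteq 2sB_j$ and $sB_j\subseteq tB$ and then applying a \emph{single} instance of \eqref{doubling} at the ball $sB_j$ with parameter exactly $\alpha$, which is what yields the ``same constants'' conclusion without any iteration.

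There is one genuine gap in your contradiction argument for the distance condition. From $G_{\Omega_j}(x_0,y_j)=0$ and uniform convergence you correctly get $y_*\notin\Omega_\infty$, but then you take $z_*$ on the segment $[y_*,x_0]$ and assert $z_*\in AB$. That need not hold: since $|z_*-y_*|\le|y_*-x_0|\le\alpha' r'$ and $|y_*-\xi|\le Ar'$, you only get $|z_*-\xi|\le (A+\alpha')r'$, which can exceed $Ar$ unless $r'$ is shrunk by the factor $A/(A+\alpha')$; doing so then forces extra applications of doubling and you lose the ``same $C(\alpha)$''. The paper avoids this entirely: instead of Green's functions and a segment, it uses the doubling hypothesis on the $\Omega_j$ themselves to show that for every $\ve>0$ one has $\omega_{\Omega_j}^{x_0}(B(\zeta_j,\ve))\gtrsim_{\ve}1$, hence by weak convergence $\omega_{\Omega_\infty}^{x_0}(B(\zeta,\ve))>0$ for all $\ve$, so the limit point $\zeta$ lies in $\supp\omega_{\Omega_\infty}^{x_0}=\d\Omega_\infty$ directly, and it automatically sits in $AB$ because $\zeta_j\in AsB_j$ with $s<1$.

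A smaller omission: the lemma asserts \eqref{doubling} for $\omega_{\Omega_\infty}$, which is a statement for \emph{every} pole $x\in\Omega_\infty$, not just $x_0$. The paper handles this by invoking the preceding lemma (which gives $\Omega_\infty^{x}=\Omega_\infty^{x_0}$ and $\omega_{\Omega_j}^{x}\rightharpoonup\omega_{\Omega_\infty}^{x}$ after a further subsequence) and then running the same argument with $x$ in place of $x_0$.
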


\begin{proof}
Let $B$ be centered on $\d\Omega_{\infty}^{x_{0}}$, $\alpha>0$, and $x\in \Omega_{\infty}^{x_{0}}$ such that $\dist(x,AB\cap \d\Omega_{\infty}^{x_{0}})\geq \alpha r_{B}$. Then there is $\xi_{j}\in \d\Omega_{j}$ so that $\xi_{j}\rightarrow x_{B}$. For every $0<r<s<t<1$, if $j$ large enough and $B_{j}=B(\xi_{j},r_{B})$, then $2rB\subseteq 2sB_{j}$, $sB_{j}\subseteq tB$, and 
\[
\dist(x,AsB_{j}\cap \d\Omega_{j})
\geq  \alpha r_{sB_{j}}=\alpha sr_{B}.\]
Otherwise, if for infinitely many $j$ we could find $\zeta_{j}\in B(x,\alpha sr_{B})\cap AsB_{j}$, then by passing to a subsequence, they converge to a point $\zeta\in B(x,\alpha r_{B})$. Since the $\omega_{\Omega_{j}}^{x}$ are uniformly doubling and there is a small ball containing $x_0$ that is contained in $\Omega_{j}$ for all $j$ large, $\omega_{\Omega_{j}}^{x_0}(B(\xi,\ve))\gec_{\ve,x} 1$ for all large $j$ and $\ve>0$, hence $\omega_{\Omega_{\infty}^{x_{0}}}^{x_0}(B(\xi,\ve))\gec_{\ve,x} 1$ as well, so $\zeta\in \supp \omega_{\Omega_{\infty}}^{x_0}=\d\Omega_{\infty}^{x_{0}}$, but then $\dist(x,AB\cap \d\Omega_{\infty})<\alpha r_{B}$, a contradiction. 

Thus, recalling the previous lemma,
\begin{align*}
\omega_{\Omega_{\infty}^{x_{0}}}^{x}(2rB)
& \leq \liminf_{j} \omega_{\Omega_{j}}^{x}(2rB)\\
& \leq \liminf_{j} \omega_{\Omega_{j}}^{x}(2sB_j)\\
& \stackrel{\eqref{doubling}}{\leq} \liminf_{j}  C(\alpha)\omega_{\Omega_{j}}^{x}(sB_j)\\
& \leq C(\alpha) \omega_{\Omega_{\infty}^{x_{0}}}^{x}(\cnj{tB})
.\end{align*}
Letting $t\uparrow 1$ and $r\uparrow 1$ now gives \eqref{doubling} when $\Omega=\Omega_{\infty}^{x_{0}}$.

\end{proof}

\section{Proof of \TheoremI}

This section is dedicated to the proof of Theorem \ref{thmi}. \\

The reverse implication follows using Theorem \ref{t:AH08} and semi-uniformity. Suppose $\Omega$ is semi-uniform, then Theorem \ref{t:AH08} implies \eqref{AHdoubling} for some constant constants $A$ and $A_0$, which is a priori weaker than \eqref{doubling}. Let $\alpha>0$, and let $x$ be such that 
\begin{equation}
\label{e:xA_0B}
\dist(x, A_0B\cap \d\Omega)\geq \alpha r_{B}.
\end{equation}
We split into three cases:
\begin{enumerate}
\item If $x\in \Omega\backslash A_0 B$, then we have $\omega^{x}_{\Omega}(2B)\leq A\omega^{x}_{\Omega}(B)$ immediately by \eqref{AHdoubling}.
\item Now suppose $x\in A_0 B$ and $\Omega\backslash 2A_{0}B\neq\emptyset$, then by semi-uniformity and \eqref{e:xA_0B}, we can find a Harnack chain from $x$ to a point $y\in \Omega\backslash A_{0}B$ (with length depending on $\alpha$ and $A_{0}$, which depends on the semi-uniformity constant), and so 
\[
\omega_{\Omega}^{x}(2B)\sim_{\alpha,A_{0}} \omega_{\Omega}^{y}(2B)
\stackrel{\eqref{AHdoubling}}{\leq} A \omega_{\Omega}^{y}(B)
\sim_{\alpha,A_{0}} A\omega_{\Omega}^{x}(B).
\]
\item If $x\in \Omega\backslash A_0 B$ and $\Omega\subseteq 2A_{0}B$, then by semi-uniformity, there is a Harnack chain from $x$ to a corkscrew point $y\in \frac{1}{2}B$, again with length depending on $\alpha$ and $A_{0}$. Thus,
\[
\omega_{\Omega}^{x}(B) 
\sim_{\alpha} \omega_{\Omega}^{y}(B) 
\stackrel{\eqref{e:bourgain}}{\gec} 1
\geq \omega_{\Omega}^{x}(2B) .
\]
\end{enumerate}
For the rest of this section, we will focus on showing that if $\omega_{\Omega}$ is doubling in the sense of \eqref{doubling}, then $\Omega$ is semi-uniform.

\begin{lemma}\label{corkscrew}
If \eqref{doubling} holds, then $\Omega$ has interior $c_{1}$-corkscrews with $c_{1}\in (0,1/8)$ depending on the CDC and doubling constants. 
\end{lemma}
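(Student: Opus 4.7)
The plan is to argue by contradiction using the compactness machinery of Lemma \ref{limlem}. Suppose no such $c_1 \in (0, 1/8)$ exists. Then for each integer $n \geq 8$ there is a CDC domain $\Omega_n$ satisfying \eqref{doubling} with uniform constants, and a ball $B_n = B(\xi_n, r_n)$ centered on $\d\Omega_n$ admitting no interior $(1/n)$-corkscrew. By translating $\xi_n$ to the origin and rescaling so that $r_n = 1$, we may assume $B_n = \bB$ with $0 \in \d\Omega_n$; the failure of the corkscrew condition then says every $y \in (1 - 2/n)\bB \cap \Omega_n$ has $\delta_{\Omega_n}(y) < 2/n$.

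To apply Lemma \ref{limlem}, I first need to produce a common interior ball $B(x_0, \rho) \subseteq \Omega_n$ (along a subsequence) so that the compactness lemma applies. I would apply Lemma \ref{l:holder} to $v(y) = 1 - \omega_{\Omega_n}^y(M\bB)$ (which vanishes on $\d\Omega_n \cap M\bB$) for a large fixed $M$, obtaining $\omega_{\Omega_n}^y(M\bB) \geq 1/2$ on a fixed scale neighborhood of $0$ in $\Omega_n$. Combining this with Bourgain's estimate (Lemma \ref{l:bourgain}) and \eqref{doubling} applied with a pole lying outside $AM\bB$, one forces the existence of a point $x_n \in \Omega_n$ with $|x_n|$ bounded and $\delta_{\Omega_n}(x_n) \geq \rho$ for some $\rho > 0$ depending only on the CDC and doubling constants; otherwise the doubling constant would blow up. After passing to a subsequence with $x_n \to x_0$, one has $B(x_0, \rho/2) \subseteq \Omega_n$ for all large $n$.

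Lemma \ref{limlem} together with Lemma \ref{l:doublinglimit} then produces a limit domain $\Omega_\infty$ containing $B(x_0, \rho/2)$ that satisfies the CDC and \eqref{doubling} with the same constants. The no-corkscrew hypothesis passes to the limit: if $y \in \Int \bB \cap \Omega_\infty$, then $\delta_{\Omega_\infty}(y) > 0$ (since $\Omega_\infty$ is open), and uniform convergence of Green's functions on $\overline{B(y, \delta_{\Omega_\infty}(y)/2)}$ forces $B(y, \delta_{\Omega_\infty}(y)/2) \subseteq \Omega_n$ for all large $n$, contradicting the no-$(1/n)$-corkscrew condition once $2/n < \delta_{\Omega_\infty}(y)/2$. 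Hence $\Int \bB \cap \Omega_\infty = \emptyset$, and openness gives $\d\Omega_\infty \cap \Int \bB = \emptyset$ as well.

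The final contradiction comes from weak convergence of harmonic measures: although $0 \in \d\Omega_n$ together with Bourgain and doubling ensures $\omega_{\Omega_n}^{x_0}$ carries uniformly positive mass on some small fixed ball $B(0, c) \subseteq \Int \bB$, the weak limit $\omega_{\Omega_\infty}^{x_0}$ must assign zero mass to $\Int \bB$ since $\d\Omega_\infty \cap \Int \bB = \emptyset$, contradicting Lemma \ref{limlem}. The main obstacle I anticipate is the first step: producing the common interior ball $B(x_0, \rho)$, which requires extracting macroscopic interior structure from boundary-only hypotheses, together with showing that the mass $\omega_{\Omega_n}^{x_0}(B(0,c))$ stays uniformly bounded below, since the lack of corkscrews could otherwise prevent a clean Harnack-type transfer from a point near $0$ to the fixed pole $x_0$.
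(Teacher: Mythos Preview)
Your approach has a genuine gap, and it is precisely the one you flag yourself. The step ``one forces the existence of a point $x_n\in\Omega_n$ with $|x_n|$ bounded and $\delta_{\Omega_n}(x_n)\ge\rho$ \dots\ otherwise the doubling constant would blow up'' is not a side remark but the entire content of the lemma: you are asserting that if $\Omega_n$ has no corkscrew at a fixed scale near $0$, then \eqref{doubling} fails. Your sketch (H\"older on $1-\omega^y(M\bB)$ plus Bourgain plus doubling at a far pole) does not close this: those ingredients only tell you that $\omega^y(M\bB)$ is large for $y$ near $0$ and that $\omega^{z}(M\bB)\lesssim\omega^{z}(\bB)$ for far $z$, but they do not by themselves produce a macroscopic interior ball, nor do they give the uniform lower bound $\omega_{\Omega_n}^{x_0}(B(0,c))\ge\epsilon$ you need at the end (there is no Harnack chain available from a point near $0$ to the fixed pole $x_0$, since that is again a corkscrew/connectivity statement). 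Once you actually prove that step, the compactness machinery of Lemma~\ref{limlem} becomes superfluous.

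The paper's argument is direct and avoids compactness entirely. It never looks for a common interior ball; instead, for each $j$ it picks any pole $y=y_j\in\Omega_j\setminus 2A\bB$ (available since $r_{B_j}<\diam\Omega_j/2A$), and works with the normalized Green's function $g_j=\omega_{\Omega_j}^{y}(\bB)^{-1}G_{\Omega_j}(\cdot,y)$. The CDC H\"older estimate (Lemma~\ref{l:holder}) combined with \eqref{w>G} and doubling gives $\sup_{\bB\cap\Omega_j}|g_j|\lesssim j^{-\alpha}\to 0$, since every point of $\bB\cap\Omega_j$ is within $j^{-1}$ of the boundary. The integration-by-parts identity \eqref{e:ibp} with a cutoff $\phi\in C_c^\infty(\bB)$ equal to $1$ on $\tfrac12\bB$ then yields
\[
\frac{\omega_{\Omega_j}^{y}(\tfrac12\bB)}{\omega_{\Omega_j}^{y}(\bB)}\le\int\phi\,\frac{d\omega_{\Omega_j}^{y}}{\omega_{\Omega_j}^{y}(\bB)}=\int g_j\,\triangle\phi\to 0,
\]
which directly contradicts \eqref{doubling} since $y\in\Omega_j\setminus 2A\bB$. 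This is exactly the mechanism you would need to fill your first step, and it finishes the lemma on its own.
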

The constant $c_{1}$ may be larger than $1/8$, but if a domain has $c_1$ corkscrews, then it has $c$-corkscrews for $c<c_1$, so the conclusion is still true. It will just be convenient to assume $c_{1}<1/8$ for later on.

\begin{proof}
Suppose there is a sequence of domains $\Omega_{j}$ for which \eqref{doubling} is satisfied, and a sequence of balls $B_{j}$ centred on $\d\Omega$ with $0<r_{B_{j}} < \diam \Omega_{j}/2A$ so that 
\[
\sup_{x\in B_{j}\cap \Omega} \dist(x,\d\Omega) <\frac{r_{B_{j}}}{j}.\]
Without loss of generality, by scaling and translating our domain, we can assume $0\in \d\Omega_{j}$ and $B_{j}= \bB$, so $r_{B_{j}}=1$. Then $\diam \d\Omega_{j}>2A$. By rotating we may assume that there is a fixed point $y\in \Omega_{j}\backslash 2A\bB$ for all $j$. 

By Lemmas \ref{l:bourgain} and \ref{l:holder}, if we define
\[
g_{j}(x) =\isif{ \omega_{\Omega_{j}}^{y}(\bB)^{-1} G_{\Omega_{j}}(x,y) & x\in \Omega \\ 0 & x\not\in \Omega}\]
then 
\[
|g_{j}(x)|\lec j^{-\alpha} \mbox{ for all }x\in \bB\cap \Omega\]
and so $g_{j}\rightarrow 0$ uniformly in $\bB$. However, if $\phi\in C_{c}(\bB)$ is equal to $1$ on $\frac{1}{2}\bB$, then by \eqref{doubling} (since $\dist(y,A\bB\cap \d\Omega)\geq A$) and \eqref{e:ibp}, 
\[
1\lec\frac{\omega_{\Omega_{j}}^{y}(\frac{1}{2}\bB)}{\omega_{\Omega_{j}}^{y}(\bB)} \leq \int \phi  \frac{d\omega_{\Omega_{j}}^{y}}{\omega_{\Omega_{j}}^{y}(\bB)} = \int g_{j}\triangle \phi d x \rightarrow 0\]
which is a contradiction. 

\end{proof}

\begin{lemma}\label{down}
Let $\Omega\subseteq \bR^{d+1}$ is a CDC domain, $c>0$ and assume \eqref{doubling} holds. Let $c_{1}$ be as in the previous lemma. For any ball $B$ centered on $\d\Omega$ and $B'\subseteq B\cap \Omega$ a $c$-corkscrew ball, there is a Harnack chain of length $N$ (depending on $c$, the CDC constants, and the doubling constants) whose first ball is $B'$ and whose last ball is a $c_{1}/2$-corkscrew ball for $\frac{1}{2}B$.
\end{lemma}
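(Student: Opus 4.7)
My plan is a compactness--contradiction argument patterned on the proof of Lemma \ref{corkscrew} and exploiting the limit lemmas \ref{limlem} and \ref{l:doublinglimit}. Suppose the conclusion fails: then there exist a sequence of CDC domains $\Omega_j$ with uniform CDC and doubling constants, balls $B_j$ centered on $\d\Omega_j$, and $c$-corkscrew balls $B_j'\subseteq B_j\cap\Omega_j$ such that every Harnack chain in $\Omega_j$ whose first ball is $B_j'$ and whose last ball is a $c_1/2$-corkscrew for $\tfrac12 B_j$ has length at least $j$. By scaling and translating, I normalize so $B_j=\bB$ and $0\in\d\Omega_j$. Passing to a subsequence, the centers $x_j$ of $B_j'$ converge to some $x_\infty\in\cnj{\bB}$ and $r_{B_j'}\to r_\infty\geq 2c$, so $B(x_\infty,r_\infty/2)\subseteq\Omega_j$ for $j$ large.

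Applying Lemma \ref{limlem} with base point $x_\infty$ produces a connected open limit $\Omega_\infty^{x_\infty}$ containing $B(x_\infty,r_\infty/2)$, satisfying the CDC with the same constants and (by Lemma \ref{l:doublinglimit}) the doubling condition \eqref{doubling} with the same constants. The uniform convergence of Green's functions on compacta rules out $0\in\Omega_\infty^{x_\infty}$ (otherwise $B(0,\ve)\subseteq\Omega_j$ for $j$ large, contradicting $0\in\d\Omega_j$); combined with a CDC-based argument that $\Omega_\infty^{x_\infty}$ accumulates at $0$, this yields $0\in\d\Omega_\infty^{x_\infty}$. Hence $\tfrac12\bB$ is centered on $\d\Omega_\infty^{x_\infty}$, and Lemma \ref{corkscrew} applied to $\Omega_\infty^{x_\infty}$ supplies a $c_1$-corkscrew point $y_\infty$ with $B(y_\infty,c_1)\subseteq\tfrac12\bB\cap\Omega_\infty^{x_\infty}$.

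Since $\Omega_\infty^{x_\infty}$ is open and connected and both $x_\infty,y_\infty$ lie in it, I would join them by a path $\gamma\subseteq\Omega_\infty^{x_\infty}$ and pass to a Besicovitch subcover of $\{B(z,\delta_{\Omega_\infty^{x_\infty}}(z)/2):z\in\gamma\}$ to obtain a Harnack chain $\tilde B_1,\dots,\tilde B_{N_0}$ of some fixed length $N_0$ joining them inside $\Omega_\infty^{x_\infty}$. Each $\cnj{2\tilde B_i}$ is compactly contained in $\Omega_\infty^{x_\infty}$, where $G_{\Omega_\infty^{x_\infty}}(x_\infty,\cdot)$ is bounded below; by Lemma \ref{limlem}(1), $2\tilde B_i\subseteq\Omega_j$ for all $j$ large, so the chain transfers verbatim to $\Omega_j$. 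Prepending a bounded number of balls to start at $B_j'$ (since $x_j\to x_\infty\in\tilde B_1$) and taking $B(y_\infty,c_1/2)$ as the final ball (a valid $c_1/2$-corkscrew for $\tfrac12 B_j$ once $j$ is large enough for this ball to lie in $\Omega_j$), we produce a Harnack chain in $\Omega_j$ of length $\lec N_0$, contradicting the imposed bound of length at least $j$.

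The main obstacle I anticipate is verifying the accumulation $0\in\d\Omega_\infty^{x_\infty}$: a direct appeal to Lemma \ref{l:bourgain} is obstructed because $x_\infty$ may lie far from $0$, so one will need to combine the CDC with the doubling condition and a change-of-pole estimate to transfer mass of $\omega_{\Omega_j}^{x_\infty}$ near $0$ into the limit measure, ruling out the possibility that $\Omega_\infty^{x_\infty}$ retreats wholly away from $0$. Modulo this boundary-accumulation point, the rest of the argument is a routine transfer of Harnack-chain data through the compactness limit.
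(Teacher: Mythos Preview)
Your approach is essentially the same compactness--contradiction argument the paper uses, and the transfer of the Harnack chain from $\Omega_\infty^{x_\infty}$ back to $\Omega_j$ is carried out just as you describe.

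Two points deserve mention. First, you omit a preliminary case split that the paper makes: before invoking Lemma~\ref{limlem} one must know $\inf_j \diam\d\Omega_j>0$ after normalizing $B_j=\bB$. If $r_{B_j}$ is arbitrarily large compared with $\diam\d\Omega_j$, this fails. The paper disposes of the case $\tfrac{4}{c}\diam\d\Omega<r_B$ by hand (then $\d\Omega\subseteq\tfrac{c}{4}B$, so one simply builds a short chain in the large boundary-free annulus $B\setminus\tfrac{c}{4}B$), and only afterwards runs the compactness argument under the standing assumption $\diam\d\Omega_j\geq \tfrac{c}{4}$.

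Second, your ``main obstacle'' is resolved precisely along the lines you sketch, and more simply than you fear. Since $x_{B_j'}$ is a $c$-corkscrew point in $\bB$, one has $\delta_{\Omega_j}(x_{B_j'})\gtrsim c$, so the doubling hypothesis \eqref{doubling} applies with $\alpha$ depending only on $c$ and $r$; iterating gives $\omega_{\Omega_j}^{x_{B_j'}}(\tfrac{r}{2}\bB)\gtrsim_r \omega_{\Omega_j}^{x_{B_j'}}(\bB)$. One more use of doubling together with Lemma~\ref{l:bourgain} (or the probability-measure observation when $\d\Omega_j\subseteq\bB$) gives $\omega_{\Omega_j}^{x_{B_j'}}(\bB)\gtrsim 1$. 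Harnack's inequality transfers this to pole $x_\infty$ (since $x_{B_j'}\to x_\infty$ and both lie in a fixed ball inside $\Omega_j$), and weak convergence then yields $\omega_\infty^{x_\infty}(r\bB)>0$ for every $r\in(0,1)$, hence $0\in\supp\omega_\infty^{x_\infty}=\d\Omega_\infty^{x_\infty}$. No change-of-pole estimate beyond the ordinary Harnack inequality is needed.
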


\begin{proof}
Without loss of generality, we can assume $c<\frac{1}{4}$ and $x_{B}=0$.

Suppose $\frac{4}{c}\diam \d\Omega < r_{B}$, then $\d\Omega\subseteq \frac{c}{4} B\subseteq \frac{1}{4}B$, and since $B'$ is a corkscrew ball, $2B'\subseteq B\cap \Omega$, hence $\frac{1}{2}B'\subseteq (\frac{c}{2}B)^{c}$. We can then connect $B'$ by a short Harnack chain in $B\backslash \frac{c}{4}B$ to the center of a ball $B''$ of radius $\frac{1}{16}r_{B}$ with $2B''\subseteq  \frac{1}{2} B\backslash \frac{1}{4}B$, so $B''$ is a $\frac{1}{8}$-corkscrew ball for $\frac{1}{2} B$. Since $c_{1}\leq \frac{1}{8}$, $B''$ is also a $c_{1}$-corkscrew for $\frac{1}{2}B$. Now we must prove the lemma in the case that $\frac{4}{c}\diam \d\Omega \geq  r_{B}$

Suppose there were domains $\Omega_{j}$ and balls $B_{j}$ centered on $\d\Omega$ with $0<r_{B_{j}}\leq \frac{4}{c}\diam \d\Omega_{j}$ whose harmonic measures were doubling  in the sense of \eqref{doubling} (with the same constants) and $c$-corkscrew balls $B_{j}'\subseteq B_{j}\cap \Omega_{j}$ for which the shortest Harnack chain whose first ball is $B_{j}'$ and whose last ball is a $c_{1}/2$-corkscrew ball contained in $\frac{1}{2}B$ has length at least $j$ (since $\Omega_{j}$ is connected and has the $c_{1}$-corkscrew property, this is well defined). Without loss of generality, we may assume $B_{j}=\bB$. By passing to a subsequence if necessary, we can find a ball $B(x_{0},c-1/j)\subseteq B_{j}'$ for all $j$. Again, we can pass to a subsequence so that the conclusions of Lemma \ref{limlem} hold. In particular, if $\Omega_{\infty}^{x_{0}}$ is from the lemma, then it is doubling by Lemma \ref{l:doublinglimit} with the same constants and so it also has the $c_{1}$-corkscrew property.

Using the Harnack principle, the doubling property, and Lemma \ref{e:bourgain}, we have that for all $0<r<1$, 
\[
\omega_{\infty}^{x_{0}}(r\bB)
\geq \limsup_{j\rightarrow\infty} \omega_{\Omega_{j}}^{x_{0}}(\frac{r}{2} \bB)\gec \omega_{\Omega_{j}}^{x_{B_{j}'}}(\frac{r}{2} \bB)\gec_{r} \omega_{\Omega_{j}}^{x_{B_{j}'}}(\bB)\gec 1.
\]
Thus, $\omega_{\infty}^{x_{0}}(r\bB)>0$ for all $0<r<1$, which implies
\[
0\in \supp \omega_{\infty}^{x_{0}} =\d\Omega_{\infty}^{x_{0}}.\] 
Since $\Omega_{\infty}^{x_{0}}$ is a connected $c_{1}$-corkscrew domain, there is a finite Harnack chain from a $c_{1}$-corkscrew ball $B'$ for $\frac{1}{2}\bB$ in $\Omega_{\infty}^{x_{0}}$ to $x_0$ contained in $\Omega_{\infty}^{c_{0}}$. Hence, there is a ball $B''\subseteq B'$ that is a $c_{1}/2$-corkscrew ball for $\frac{1}{2}\bB\cap \Omega_{j}$ for all $j$ large. 


Let $N$ be the length of the chain. Then this Harnack chain is contained in $\Omega_{j}$ for all $j$ sufficiently large (and by replacing them with some smaller balls, we can replace it with a Harnack chan {\it for} $\Omega_{j}$ of length no more than a multiple of $N$). But this is a contradiction for $j\gg N$.

%
%
%
\end{proof}

\begin{lemma}\label{up}
Let $\Omega\subseteq \bR^{d+1}$ have the CDC and assume \eqref{doubling} holds and $c>0$. Then for any ball $B$ centered on $\d\Omega$ and $B'\subseteq B$ be a $c$-corkscrew ball for $B$ such that $\d\Omega\backslash 4B\neq\emptyset$, there is a Harnack chain of length $N$ (depending on $c$, the CDC and doubling constants) whose first ball is $B'$ and whose last ball is a $c_{1}/2$-corkscrew ball for $2B$. 
\end{lemma}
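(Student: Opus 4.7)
The plan is to adapt the compactness/contradiction argument of Lemma \ref{down} to the ``going up'' setting, with the hypothesis $\d\Omega\backslash 4B\neq \emptyset$ taking over the role that the diameter lower bound played there. Suppose no uniform $N$ exists. Then there are CDC domains $\Omega_{j}$ with harmonic measure doubling in the sense of \eqref{doubling} under common constants, balls $B_{j}$ centered on $\d\Omega_{j}$ with $\d\Omega_{j}\backslash 4B_{j}\neq \emptyset$, and $c$-corkscrew balls $B_{j}'\subseteq B_{j}$ such that the shortest Harnack chain in $\Omega_{j}$ from $B_{j}'$ to a $c_{1}/2$-corkscrew ball for $2B_{j}$ has length at least $j$. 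After translating and rescaling, I reduce to $B_{j}=\bB$ and $0\in \d\Omega_{j}$. The condition $\d\Omega_{j}\backslash 4\bB\neq \emptyset$ forces $\Omega_{j}\backslash 2\bB\neq \emptyset$, since any boundary point outside $4\bB$ is approached by interior points of the open set $\Omega_{j}$, so Lemma \ref{l:bourgain} applies to $\bB$ uniformly in $j$.

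Passing to a subsequence, I find $x_{0}\in \bB$ with $B(x_{0},c-1/j)\subseteq B_{j}'\subseteq \Omega_{j}$ for every $j$. Lemma \ref{limlem} then yields, along a further subsequence, a connected open CDC set $\Omega_{\infty}^{x_{0}}$ containing $x_{0}$ on which $G_{\Omega_{j}}(x_{0},\cdot)\to G_{\Omega_{\infty}^{x_{0}}}(x_{0},\cdot)$ uniformly on compacta of $\{x_{0}\}^{c}$ and $\omega_{\Omega_{j}}^{x_{0}}\warrow \omega_{\Omega_{\infty}^{x_{0}}}^{x_{0}}$. By Lemma \ref{l:doublinglimit}, $\Omega_{\infty}^{x_{0}}$ satisfies \eqref{doubling} with the same constants, so Lemma \ref{corkscrew} gives $\Omega_{\infty}^{x_{0}}$ the $c_{1}$-corkscrew property. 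Exactly as in Lemma \ref{down}, for any $r\in (0,1)$ the chain
\[
\omega_{\Omega_{\infty}^{x_{0}}}^{x_{0}}(r\bB)\geq \liminf_{j\to\infty}\omega_{\Omega_{j}}^{x_{0}}\ps{\tfrac{r}{2}\bB}\gtrsim_{r}\liminf_{j\to\infty}\omega_{\Omega_{j}}^{x_{B_{j}'}}(\bB)\gec 1
\]
holds via weak convergence on open sets, Harnack applied between $x_{0}$ and $x_{B_{j}'}$ (both lying in $B_{j}'$), doubling, and Lemma \ref{l:bourgain}; consequently $0\in \supp \omega_{\Omega_{\infty}^{x_{0}}}^{x_{0}}=\d\Omega_{\infty}^{x_{0}}$.

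With $0\in \d\Omega_{\infty}^{x_{0}}$, the ball $2\bB$ is centered on $\d\Omega_{\infty}^{x_{0}}$, so the $c_{1}$-corkscrew property gives a $c_{1}$-corkscrew ball $B^{*}\subseteq 2\bB\cap \Omega_{\infty}^{x_{0}}$, and the connectedness and openness of $\Omega_{\infty}^{x_{0}}$ produce a finite Harnack chain in $\Omega_{\infty}^{x_{0}}$ of some length $N$ from $x_{0}$ to the center of $B^{*}$. The uniform convergence of $G_{\Omega_{j}}(x_{0},\cdot)$ on compacta of $\{x_{0}\}^{c}$ then guarantees that, after possibly shrinking radii slightly, these balls together with the concentric half-radius copy of $B^{*}$ (which is a $c_{1}/2$-corkscrew ball for $2\bB$) all sit inside $\Omega_{j}$ for $j$ large. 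This produces a Harnack chain in $\Omega_{j}$ of length $\lesssim N$ from $B_{j}'$ to a $c_{1}/2$-corkscrew ball for $2B_{j}$, contradicting the lower bound $j$.

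The main obstacle is the step establishing $0\in \d\Omega_{\infty}^{x_{0}}$, which is exactly where the hypothesis $\d\Omega\backslash 4B\neq \emptyset$ is indispensable: without it, $\Omega_{j}^{c}$ could be too small near $\bB$ to feed Lemma \ref{l:bourgain} uniformly in $j$, and the origin could fail to remain on the limit boundary. Once this is in hand, the remainder transcribes the argument of Lemma \ref{down} with the obvious substitution of ``$c_{1}/2$-corkscrew for $2B$'' in place of ``$c_{1}$-corkscrew for $\frac{1}{2}B$''.
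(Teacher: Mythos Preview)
Your compactness framework is correct, but there is a genuine gap at the step where you invoke the $c_{1}$-corkscrew property of $\Omega_{\infty}^{x_{0}}$ on the ball $2\bB$. Lemma~\ref{corkscrew} only produces corkscrews in balls whose radius is small compared to $\diam\d\Omega_{\infty}^{x_{0}}$ (look at its proof: the contradiction is run only for balls with $r_{B}<\diam\Omega/2A$). You have established $0\in \d\Omega_{\infty}^{x_{0}}$, but nothing in your argument prevents $\d\Omega_{\infty}^{x_{0}}$ from being, say, contained in $\tfrac{1}{10}\bB$, nor $\Omega_{\infty}^{x_{0}}$ itself from being a small set near $x_{0}$. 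In that scenario the corkscrew lemma does not apply at scale $2$, and when $c$ is small there is no a priori ball of radius $\sim c_{1}$ inside $2\bB\cap \Omega_{\infty}^{x_{0}}$ at all. Your diagnosis in the final paragraph is therefore misplaced: showing $0\in\d\Omega_{\infty}^{x_{0}}$ is the \emph{easy} use of the hypothesis (it goes through exactly as in Lemma~\ref{down}); the hard use is ruling out that the limit domain collapses.

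This is precisely where the paper's argument diverges from yours. Rather than locating the origin on $\d\Omega_{\infty}^{x_{0}}$, the paper picks $y_{j}\in\d\Omega_{j}\backslash 4\bB$ and uses doubling together with \eqref{e:bourgain} (applied to a large ball centered at $y_{j}$) to get $\omega_{\Omega_{j}}^{x_{0}}((3\bB)^{c})\gtrsim 1$ uniformly in $j$, hence $\omega_{\Omega_{\infty}^{x_{0}}}^{x_{0}}(\overline{2\bB})<\beta<1$. This forces $\Omega_{\infty}^{x_{0}}$ to contain points outside $\overline{2\bB}$ (otherwise its harmonic measure would be a probability measure supported in $\overline{2\bB}$), so a curve in $\Omega_{\infty}^{x_{0}}$ from $x_{0}$ actually exits $2\bB$. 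The paper then splits into two cases according to whether $\d\Omega_{\infty}^{x_{0}}\subseteq \tfrac{3}{2}\bB$; in the first case the annulus $2\bB\backslash\tfrac{3}{2}\bB$ is entirely in $\Omega_{\infty}^{x_{0}}$ and one takes a trivial corkscrew ball there, while in the second case $\diam\d\Omega_{\infty}^{x_{0}}$ is now large enough for Lemma~\ref{corkscrew} to bite. Your argument can be repaired by inserting exactly this step.
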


\begin{proof}
Suppose there were domains $\Omega_{j}$ and balls $B_{j}$ so that $\d\Omega_{j}\backslash 4B_{j}\neq\emptyset$ and a $c$-corkscrew ball $B_{j}'\subseteq B_{j}\cap \Omega_{j}$ so that the length of any Harnack chain from $B_{j}'$ to a $c_{1}/2$-corkscrew ball for $2B_{j}$ is at least $j$. Without loss of generality, $B_{j}=\bB$. Pass to a subsequence just as in the previous lemma, so $B_{j}$ converges to a $c$-corkscrew ball $B(x_{0},c)\subseteq \bB\cap \Omega_{\infty}$. Let $y_{j}\in \d\Omega_{j}\backslash 4\bB$. Since we are assuming \eqref{doubling} holds
\[
\omega_{\Omega_{j}}^{x_{0}}(\Omega\backslash 3\bB)\geq \omega_{\Omega_{j}}^{x_{0}}(B(y_j,|x-y_j|/4))
\stackrel{\eqref{doubling}}{\sim} \omega_{\Omega_{j}}^{x_{0}}(B(x,2|x-y_j|))
\stackrel{\eqref{e:bourgain}}{\sim }1.
\]
Thus, there is $\beta\in (0,1)$ so that 
\[
\omega_{\infty}^{x_{0}}(\cnj{2\bB})\leq \liminf_{j\rightarrow\infty} \omega_{\Omega_{j}}^{x_{0}}(3\bB)<\beta.\]

This means there is a curve in $\Omega_{\infty}^{x_{0}}$ from $x_{0}$ to $\Omega\backslash \cnj{2\bB}$, otherwise $\Omega_{\infty}^{x_{0}}\subseteq 2\bB$, and harmonic measure for bounded domains is a probability measure, but $\supp\omega_{\infty}^{x_{0}}=\d\Omega_{\infty}^{x_{0}}\subseteq \cnj{2\bB}$, so that 
\[
1=\omega_{\infty}^{x_{0}}(\bR^{d})=\omega_{\infty}^{x_{0}}(\cnj{2\bB})<\beta<1,
\]
a contradiction. 

If $\d\Omega_{\infty}\subseteq \frac{3}{2}\bB$ (and recall $c_{1}<1/8$), then this curve connects $x_{0}$ to a ball $B'$ of radius $c_{1}$ whose double is contained in  $2\bB\backslash \frac{3}{2}\bB$ (so it is a $\frac{c_{1}}{2}$-corkscrew ball for $2\bB$). Otherwise, if $\d\Omega_{\infty}\not\subseteq \frac{3}{2}\bB$, then as $\Omega_{\infty}^{c_{0}}$ has the $c_{1}$-interior corkscrew property, there is a $c_{1}$-corkscrew ball $B'\subseteq 2\bB\cap \Omega_{\infty}^{x_{0}}$, and since $\Omega_{\infty}^{x_{0}}$ is connected, the curve can be extended to connect $x_{0}$ to this ball. In either case, there is a curve $\gamma\subseteq \Omega_{\infty}^{x_{0}}$ from $x_{0}$ to the center of a $c_{1}$-corkscrew ball $B'\subseteq 2\bB$ for $\Omega_{\infty}^{x_{0}}$. For $j$ large enough, $\frac{1}{2} B'$ is a $c_{1}/2$-corkscrew ball for $\bB$ in $\Omega_{j}$. Also for $j$ large enough, this curve is also contained in $\Omega_{j}$. We can cover $\gamma$ with  boundedly many balls (depending on $\gamma$ but independent of $j$) to form a Harnack chain between $x_{0}$ and $\frac{1}{2}B'$ in $\Omega_{j}$, but this is a contradiction for $j$ large enough. 

\end{proof}

%

We now finish the proof of  Theorem \ref{thmi}. Assume $\omega_{\Omega}$ is doubling. Let $x\in \Omega$, $\xi\in \d\Omega$, and $0<r<\diam \d\Omega$. Let $\zeta\in \d\Omega$ be closest to $x$. We will show that there is a function $N$ as in Lemma \ref{SU}. 

We will also abuse notation below and write $\log$ for $\log_{+}=\max\{0,\log \}$. 


There are two cases to consider:\\

\noindent {\bf Case 1.} Suppose $\delta_{\Omega}(x)=|x-\zeta|\geq 2^{-4} |x-\xi|$ and $B=B(\xi,2|x-\xi|)$. Since $\omega$ is doubling, $x$ is a $\frac{1}{2^{6}}$-corkscrew point for the ball $B(\xi,2|x-\xi|)$, and if $r<4|x-\xi|$, we can iterate Lemma \ref{down} to find a Harnack chain from $x$ to a $c_{1}$-corkscrew point $y\in B(\xi,r)\cap \Omega$ of length at most a constant times $\log \frac{|x-\xi|}{r}+1$. 

If $r\geq 4|x-\xi|$, then $|x-\xi|\leq r/4<\diam \d\Omega/4$ and since $x$ is a corkscrew point for $B$, we can iterate using Lemma \ref{up} instead to find a Harnack chain to a corkscrew point $y\in B(\xi,r)$ with $|y-x|\gec r$ of length at most a constant times 
\[
\log\frac{r}{|x-\xi|}+1
\leq \log\frac{r}{\delta_{\Omega}(x)}+1
\lec \log\frac{|x-y|}{\delta_{\Omega}(x)}+1.\]

\noindent {\bf Case 2.} Suppose $|x-\zeta|< 2^{-4} |x-\xi|$. Let $B=B(\zeta,2|x-\zeta|)$, so $x$ is a $\frac{1}{2}$-corkscrew point for this ball. Let $k$ be the largest integer for which $\xi\not\in 2^{k+2}B$.  Since 
\[
|\xi-\zeta|
\geq |\xi-x|-|x-\zeta|
>(2^{4}-1)|x-\zeta|
\geq 2^{3}|x-\zeta|=2^{2}r_{B},\]
we know $k\geq 0$. Since $x$ is a $\frac{1}{4}$-corkscrew point for $B$, by iterating Lemma \ref{up}, for $0\leq j\leq k$, we can Harnack chains from a $c_{1}$-corkscrew point in $2^{j}B$ (that is $x$ if $j=0$) to a $c_{1}$-corkscrew point in $2^{j+1}B$ of lengths at most some constant $N$ (depending on the CDC and doubling constants). If we combine these balls, we get a Harnack chain from $x$ to a $c_{1}$-corkscrew point $x'\in 2^{k+1}B$ of total length at most
\[
N\cdot (k+1)\lec \log\frac{|x-\xi|}{|x-\zeta|}+1
= \log\frac{|x-\xi|}{\delta_{\Omega}(x)}+1.
\] 
In particular, since $x'$ is a $c_{1}$-corkscrew point in $2^{k+1}B$ and $\xi\in 2^{k+3}B$, we know $|x'-\xi|\leq 2^{k+6}r_{B}$, and so $x'$ is a corkscrew point in $B(\xi,2^{k+7}r_{B})$. Note $2^{k+7}r_{B}\sim |x-\xi|$. Indeed,

\[
2^{k+3}r_{B}>|\xi-\zeta|\geq |\xi-x|-|x-\zeta|> (1-2^{-4})|\xi-x|>(1-2^{-4})r\]
and 
\[
2^{k+2}r_{B}\leq |\xi-\zeta|\leq  |\xi-x|+|x-\zeta|< (1+2^{-4})|\xi-x|.\]


Hence, $r<2^{k+4}r_{B}$, so we can apply Lemma \ref{down} again, and using the fact that $2^{k}r_{B}\sim |x-\xi|$, we can make a Harnack chain from $x'$ to a corkscrew point $y\in B(\xi,r)$ with $|y-x|\gec r$ of length at most a constant times
\[
\log \frac{2^{k+7}r_{B}}{r} +1
\lec \log \frac{|x-\xi|}{r}+1
\]
Combing our two chains together gives us a Harnack chain from $x$ to $\xi$ of total length at most a constant times $\log \frac{|x-\xi|}{\min\{\delta_{\Omega}(x),r\}}+1$. 
\[
|x-\xi|\leq |x-y|+|y-\xi|
\leq |x-y|+r\lec |x-y|.
\]
This means the chain has length at most $\log \frac{|x-y|}{\min\{\delta_{\Omega}(x),r\}}+1$

Taking the minimum of all the possible estimates we have for the possible length of a Harnack chain gives us our desired function $N$ and semi-uniformity now follows from Lemma \ref{SU}.

\begin{remark}
Note that as a corollary of the proof, we have that, for a semi-uniform domain, the function $N$ given in Theorem \ref{SU} is $N(x)=C\min\{1,\log x+1\}$, that is, the shortest Harnach chain from $x$ to a corkscrew point $y\in B(\xi,r)$ is at most a constant times
\[
\log \frac{|x-y|}{\min\{\delta_{\Omega}(x),r\}}+1. 
\]
\end{remark}

\section{Proof of Theorem \ref{thmiii}: Part I}

\begin{definition}
For a domain $\Omega\subseteq \bR^{d+1}$, we say that points $y_{1},...,y_{n}$ are reference points for a ball $B$ centered on $\d\Omega$ if
\begin{equation}
\label{e:pseudo}
\min_{i=1,...,n}k_{\Omega}(x,y_{i})\lec \log\frac{r_{B}}{\delta_{\Omega}(x)}+1 \;\; \mbox{ for all }x\in B. 
\end{equation}
where $k_{\Omega}(x,y)$ denotes the quasihyperbolic distance between $x$ and $y$. As observed in \cite[p. 434]{AH08}, if $N_{\Omega}(x,y)$ denotes the length of the shortest Harnack chain between $x$ and $y$, then
\[
N_{\Omega}(x,y)\sim k_{\Omega}(x,y)+1.
\]
 
\end{definition}

\begin{remark}
\label{r:ref}
We first make some observations about reference points. 
\begin{enumerate}
\item If $y_{i}$ is a reference point for $B$, then  $\delta_{\Omega}(y_{i})\gec r_{B}$. 
\item By semi-uniformity, if $y_{1},...,y_{n}$ are reference points for $B$ that aren't necessarily in $B$, then using Harnack chains we can find new reference points $z_{1},...,z_{n}$ that are corkscrew points in $B$ (with different corkscrew and reference point constants). This is because semi-uniformity implies we may find Harnack chains from the $y_i$ to corkscrew points $z_i\in B$, so for $x\in B$,
\[
k_{\Omega}(x,z_i)
\leq k_{\Omega}(x,y_i)+k_\Omega(y_i,z_i)
\lec k_{\Omega}(x,y_i)+1.
\]

\item Similarly, if $y_{1},...,y_{n}$ are in $B$ already, $M\geq 1$, and $2Mr_{B} < \diam \d\Omega$, we can also find reference points $z_{1},...,z_{n}$ for $B$ outside $MB$, say, though with constants depending also on $M$. Indeed, if $z\in \d\Omega\backslash 2MB$, by Theorem \ref{SU}, we can find bounded Harnack chains from each $y_{i}$ to a corkscrew ball $z_{i}$ for $B(z,Mr_{B})\subseteq MB^{c}$, so now \eqref{e:pseudo} holds with the $z_{i}$ in place of the $y_{i}$ with constant depending on $M$. 

\item If $B$ is a ball centered on $\d\Omega$ with $r<2\diam \Omega$, we can always find $n$ reference points with $n$ at most a constant depending on the semi-uniformity. Aikawa, Hirata, and Lundh showed this held for any John domain \cite[Proposition 2.1]{AHL06}. For a general semi-uniform domain (which won't be John if it is unbounded), we prove this as follows. First, we can assume that $r_{B}<\diam \Omega/8$. Semi-uniformity and Theorem \ref{SU} imply that for any $x_{0}\in B\cap \Omega$, $\zeta\in \d\Omega$ the closest point to $x_0$ and $B'=B(\zeta,2\delta_{\Omega}(x_{0}))$, and  $k$ the maximal integer so that $2^{k}B'\subseteq 2B$, there are corkscrew points $x_{i}\in 2^{i}B'$ and Harnack chains from $x_{i}$ to $x_{i+1}$ of length at most some number $N$. Note that 
\[
k\sim \floor{\log \frac{r_{B}}{r_{B'}}}+1\lec \log\frac{r_{B}}{\delta_{\Omega}(x_0)}+1\]
and $\delta_{\Omega}(x_{k})\gec  2^{k}r_{B'}\gec r_{B}$. Let $Q_{x_0}$ be the Whitney cube in $\Omega$ to which $x_{k}$ belongs. Then $\ell(Q_{x_0})\sim r_{B}$, $Q_{x_0}\cap 2B\neq\emptyset$, and we have shown that there is a Harnack chain from the center of $Q_{x_0}$ to $x_0$ of length at most a constant times $\log\frac{r_{B}}{\delta_{\Omega}(x_0)}+1$. Since the number of Whitney cubes $Q$ satisfying $\ell(Q)\sim r_{B}$ and $Q\cap B\neq\emptyset$ is uniformly bounded, we can take their centers as our reference points for $B$. 

If $2\diam \d\Omega>r_{B}\geq \diam \d\Omega/8$, we can cover $\d\Omega$ with a bounded number of balls of radius $\diam \d\Omega/16$ and then the union of their respective reference points are a set of reference points for $B$. 

Note that this is not always possible for $r\gg \diam \d\Omega$, and the example is the same as Remark \ref{r:referee}.
\end{enumerate}
\end{remark}

The objective of this section is to prove the following:

\begin{lemma}
\label{l:mainhmlemma}
Let $\Omega\subseteq \R^{d+1}$ be a CDC semi-uniform domain, $B'\subseteq B$ two balls centered on $\d\Omega$ with $r_{B}<\diam \d\Omega$ and $y_{1},...,y_{n}\in \Omega$ a set of reference points for $2B$. There is $M>0$ depending on the CDC and semi-uniformity constants so that 
\[
\frac{\omega_{\Omega}^{x}(B')}{\omega_{\Omega}^{x}(B)}\lec \sum_{i=1}^{n} \omega_{\Omega}^{y_{i}}(B') \;\; \mbox{ for all }x\in \Omega\backslash MB .\]
In particular, if $E\subseteq B$ is a Borel set, then 
\[
\frac{\omega_{\Omega}^{x}(E)}{\omega_{\Omega}^{x}(B)}\lec \sum_{i=1}^{n} \omega_{\Omega}^{y_{i}}(E) \;\; \mbox{ for all }x\in \Omega\backslash MB.\]
\end{lemma}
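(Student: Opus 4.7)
The plan is to follow the classical Jerison--Kenig argument for such comparison estimates \cite{JK82}, with the single NTA corkscrew replaced by the collection of reference points $y_1,\dots,y_n$ from Remark \ref{r:ref}.

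\textit{Setup.} Let $u(y):=\omega_\Omega^y(B')$ and $v(y):=\omega_\Omega^y(B)$, both positive and harmonic in $\Omega$, both vanishing continuously on $\d\Omega\backslash\cnj{B}$; write $\alpha:=\sum_{i=1}^n\omega_\Omega^{y_i}(B')$. I seek $M\geq 1$ and $C>0$, depending only on the CDC and semi-uniformity constants, with
\[
u(x)\leq C\alpha\,v(x)\quad\text{for every }x\in\Omega\backslash MB.
\]
Granted this for balls $B'$, the statement for an arbitrary Borel $E\subseteq B$ should follow by outer regularity, approximating $E$ by a finite union of balls, and a Vitali-type covering to avoid overcounting $\omega_\Omega^{y_i}(\,\cdot\,)$ on the right-hand side.

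\textit{Reduction via the maximum principle.} Consider the harmonic function $h:=u-C\alpha v$ on the open set $\Omega\backslash\cnj{MB}$. On $\d\Omega\backslash MB$ we have $u=v=0$ (since $B\subseteq MB$), so $h=0$, and $h\to 0$ at infinity in the unbounded case. Hence the problem reduces to the pointwise boundary bound $u(z)\leq C\alpha\,v(z)$ for every $z\in\d(MB)\cap\Omega$. I split this into two regimes depending on the size of $\delta_\Omega(z)$. In the \emph{thick regime} $\delta_\Omega(z)\gec r_B$, Theorem \ref{SU} provides a Harnack chain of bounded length from $z$ to some reference point $y_i$, so Harnack's inequality together with \eqref{e:bourgain} (applied at $y_i$, which is a corkscrew for $B$ by Remark \ref{r:ref}(1)) give $u(z)\lec\alpha$ and $v(z)\gec 1$, so $u(z)\lec\alpha v(z)$ as desired. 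In the \emph{thin regime} $\delta_\Omega(z)\ll r_B$, let $\xi_z\in\d\Omega$ be a closest boundary point to $z$ and $B_z:=B(\xi_z,2\delta_\Omega(z))$; for $M$ sufficiently large, $2B_z\cap 2B=\emptyset$, so both $u$ and $v$ vanish on $2B_z\cap\d\Omega$. The LCE (Lemma \ref{l:carleson}) applied to $u$ on $B_z$ yields $u(z)\lec u(y_\ast)$ for some corkscrew $y_\ast$ of $B_z$, and Theorem \ref{SU} provides a Harnack chain from $y_\ast$ to some reference point $y_i$; a parallel lower bound on $v(z)$, obtained from \eqref{w>G} and Harnack applied to $G_\Omega(\cdot,y_i)$ along the same chain, is then designed to match the upper bound on $u(z)$ and thereby produce the desired inequality.

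\textit{Principal obstacle.} The subtle step is the thin regime: the Harnack chain from Theorem \ref{SU} linking $y_\ast$ to $y_i$ has length $\sim \log(r_B/\delta_\Omega(z))$, so na\"{i}vely each application of Harnack's inequality introduces a polynomial factor $(r_B/\delta_\Omega(z))^\beta$ in both the upper bound on $u$ and the lower bound on $v$. Making these factors cancel in the ratio $u(z)/v(z)$ is essentially a boundary Harnack principle in the semi-uniform CDC setting --- automatic in NTA domains --- and is the main technical input one must extract by hand from Theorem \ref{SU}, Theorem \ref{thmi}, \eqref{w>G}, the symmetry of $G_\Omega$, and the H\"{o}lder estimate of Lemma \ref{l:holder}. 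The dependence of the LCE corkscrew on the test function is absorbed precisely by passing to a bounded collection of reference points and replacing ``the'' corkscrew by $\sum_i\omega^{y_i}(B')$.
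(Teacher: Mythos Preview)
Your outline correctly identifies the difficulty but does not resolve it, and in fact the obstacle you name is a genuine obstruction to your approach as written. As the paper notes just before the substitute lemma, Aikawa has shown that for CDC John domains the boundary Harnack principle is \emph{equivalent} to uniformity; in a merely semi-uniform domain it can fail. Consequently there is no reason to expect the polynomial factors $(r_B/\delta_\Omega(z))^\beta$ appearing in your upper bound for $u(z)$ and in your lower bound for $v(z)$ to match: the exponent $\beta$ coming from Harnack along a chain of length $\sim\log(r_B/\delta_\Omega(z))$ is a crude chain constant, not the sharp H\"older exponent of Lemma~\ref{l:holder}, and the two need not agree. Your thin-regime argument therefore does not close, and the passage ``is the main technical input one must extract by hand'' is exactly the place where a new idea is required rather than a routine extraction.

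The paper proceeds differently and avoids comparing $u=\omega^{\,\cdot}(B')$ to $v=\omega^{\,\cdot}(B)$ directly. Instead it works through the Green function. First, Lemma~\ref{l:AHlemma} gives $\omega_\Omega^x(B')\lec r_{B'}^{d-1}\sum_i G_\Omega(x,z_i)$ for reference points $z_i$ of $B'$. The crucial step is then a \emph{one-sided} weak BHP substitute (the lemma containing \eqref{e:bhp}): for any nonnegative harmonic $w$ vanishing on $2M_0B\cap\d\Omega$ one has
\[
w(z)\lec\Bigl(\sum_j w(y_j)\Bigr)\sum_j r_B^{d-1}G_\Omega(z,y_j)\qquad (z\in B),
\]
proved by building an auxiliary domain $\Omega'\subseteq M_0B\cap\Omega$ out of dilated Whitney cubes along the reference-point Harnack chains and applying the maximum principle \emph{in $\Omega'$}. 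Applying this with $w=G_\Omega(x,\cdot)$ at each $z_i$, and then using \eqref{w>G} twice (once to convert $\sum_j G_\Omega(x,y_j)$ to $r_B^{1-d}\omega_\Omega^x(B)$, once to convert $G_\Omega(z_i,y_j)$ to $r_{B'}^{1-d}\omega_\Omega^{y_j}(B')$), the powers of $r_B$ and $r_{B'}$ cancel algebraically and the desired inequality follows. The point is that \eqref{e:bhp} is a bound of $w$ against Green's function rather than a comparison of two arbitrary harmonic functions, and this weaker statement \emph{is} available in semi-uniform CDC domains via the $\Omega'$ construction. That construction is the missing idea in your proposal.
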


We recall the following lemma from \cite[Lemma 3.6]{AH08}. The statement there is slightly different, but the proof is exactly the same.

\begin{lemma}
\label{l:AHlemma}
Let $\Omega\subseteq \R^{d+1}$ be a CDC domain, $B$ a ball centered on $\d\Omega$ with $r_{B}<\diam \d\Omega$, and $y_{1},...,y_{n}\in \Omega$ be reference points for $2B$. Then
\begin{equation}
\omega_{\Omega}^{x}(B)\lec r_{B}^{d-1} \sum_{i=1}^{n}G_{\Omega}(x,y_{i}) \;\; \mbox{ for }x\in \Omega\backslash 2B.
\end{equation}
The implied constant depends on the CDC constant and reference point constants. 
\end{lemma}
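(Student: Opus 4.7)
The plan is to first bound $\omega_{\Omega}^{x}(B)$ by the supremum of $G_{\Omega}(x,\cdot)$ over $2B\cap\Omega$ via a cutoff/integration-by-parts, and then reduce that supremum to an evaluation at one of the reference points through the LCE combined with Harnack's inequality along a short chain supplied by \eqref{e:pseudo}.

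First, pick $\phi\in C_{c}^{\infty}(\bR^{d+1})$ with $\phi\equiv 1$ on $B$, $\supp\phi\subseteq 2B$, $0\leq \phi\leq 1$, and $|\triangle \phi|\lec r_{B}^{-2}$. Since $G_{\Omega}\geq 0$, the representation \eqref{e:ibp} yields
\[
\omega_{\Omega}^{x}(B)\leq \int \phi\, d\omega_{\Omega}^{x} = \int_{\Omega}\triangle \phi(y)\,G_{\Omega}(x,y)\,dy\lec r_{B}^{-2}\int_{2B\cap\Omega}G_{\Omega}(x,y)\,dy \lec r_{B}^{d-1}\sup_{y\in 2B\cap\Omega}G_{\Omega}(x,y).
\]
It therefore suffices to bound $\sup_{y\in 2B\cap\Omega}G_{\Omega}(x,y)$ by $\sum_{i=1}^{n}G_{\Omega}(x,y_{i})$.

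Next, assume first that $x\in \Omega\setminus CB$ for a sufficiently large constant $C$ (say $C=8$). Then $u:=G_{\Omega}(x,\cdot)$ is non-negative, harmonic on $4B\cap\Omega$, and vanishes continuously on $4B\cap\d\Omega$ by the CDC. The LCE for semi-uniform CDC domains (\Lemma{carleson}) produces a corkscrew point $y^{*}\in 2B$ (with $\delta_{\Omega}(y^{*})\gec r_{B}$, depending on $u$) such that $\sup_{y\in 2B\cap\Omega}G_{\Omega}(x,y)\lec G_{\Omega}(x,y^{*})$. Since $y^{*}\in 2B$ is a corkscrew, the reference point property \eqref{e:pseudo} applied to $2B$ forces $\min_{i}k_{\Omega}(y^{*},y_{i})\lec 1$, so there is a short Harnack chain from $y^{*}$ to some $y_{i^{*}}$ that stays inside $2B$ and hence is well-separated from $x$. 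Harnack's inequality along this chain gives $G_{\Omega}(x,y^{*})\sim G_{\Omega}(x,y_{i^{*}})$, and combining the three estimates finishes this case.

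For the annular case $x\in(\Omega\setminus 2B)\cap CB$, semi-uniformity together with Theorem \ref{SU} (using $r_{B}<\diam\d\Omega$) produces a Harnack chain of bounded length from $x$ to some $x^{*}\in \Omega\setminus CB$, so Harnack's inequality gives $\omega_{\Omega}^{x}(B)\sim\omega_{\Omega}^{x^{*}}(B)$ and $G_{\Omega}(x,y_{i})\sim G_{\Omega}(x^{*},y_{i})$ uniformly, transferring the estimate. The main technical point is to make sure every Harnack chain used avoids the pole $x$; the separation $x\notin CB$ together with the fact that the reference-point chains stay inside $2B$ guarantee this, so the implicit constants depend only on the CDC, semi-uniformity, and reference-point constants.
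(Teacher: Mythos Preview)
Your first step—bounding $\omega_{\Omega}^{x}(B)$ by $r_B^{d-1}\sup_{2B\cap\Omega}G_{\Omega}(x,\cdot)$ via a cutoff and \eqref{e:ibp}, then invoking a Carleson-type estimate tied to the reference points—is exactly the skeleton of the argument in \cite[Lemma 3.6]{AH08} that the paper defers to. Note also that \Lemma{carleson} already yields $\sup_{B\cap\Omega}u\lec\sum_i u(y_i)$ directly, so your detour through an LCE corkscrew $y^*$ and a further short chain to some $y_{i^*}$ is redundant.

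The real gap is in the annular case. The lemma is stated for a CDC domain equipped with reference points for $2B$; semi-uniformity is \emph{not} among the hypotheses. Yet you invoke ``semi-uniformity together with Theorem~\ref{SU}'' to produce a bounded Harnack chain from $x$ to some $x^{*}\in\Omega\setminus CB$. Without semi-uniformity there is no reason such a chain exists, and even granting one, nothing prevents it from passing through a pole $y_i$, which would invalidate $G_{\Omega}(x,y_i)\sim G_{\Omega}(x^{*},y_i)$. There is also a related issue with your ``far'' case: \Lemma{carleson} requires $u$ harmonic on $MB\cap\Omega$ with $M$ depending on the reference-point constants (not $4B$ or $8B$), so for $x\in MB\setminus 2B$ the function $G_{\Omega}(x,\cdot)$ has its pole inside $MB$ and the lemma as stated does not apply.

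The argument in \cite{AH08} avoids moving the pole altogether: after the cutoff step one controls $G_{\Omega}(x,\cdot)$ on $2B\cap\Omega$ directly, combining the H\"older decay of \Lemma{holder} near $\d\Omega$ with Harnack along the reference-point chains (which, with their bounded quasi-hyperbolic length, stay inside a fixed dilate of $B$ and hence avoid $x\notin 2B$). This yields the estimate uniformly for every $x\in\Omega\setminus 2B$ without any connectivity hypothesis beyond the existence of reference points.
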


\begin{remark}
Note that by Remark \ref{r:ref}, the reference points can also be taken to be corkscrew points in $B$.
\end{remark}

\begin{lemma}
\label{l:carleson}
Let $\Omega\subseteq \R^{d+1}$ be a CDC domain and $B$ a ball centered on $\d\Omega$ with $r_{B}<\diam \d\Omega$. Let $y_{1},...,y_{n}$ be reference points for $2B$, and $M$ large enough (depending on the reference point constants) so that each $x\in 2B\cap \Omega$ can be connected to one of the $y_{i}$ by a Harnack chain of length $N_{\Omega}(x,y_i)$ in $MB$. Let $u$ be a non-negative harmonic function on $\Omega\cap MB$ vanishing continuously on $2B\cap \d\Omega$.  then
\[
\sup_{B\cap \Omega} u\lec \sum_{j=1}^{n}  u(y_{i}). 
\]
\end{lemma}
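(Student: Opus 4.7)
I plan to follow the Jerison--Kenig proof of the classical Carleson estimate in NTA domains \cite{JK82}, adapted to the semi-uniform CDC setting. The two tools are (i) the boundary H\"older decay of Lemma~\ref{l:holder}, a consequence of the CDC, which says that a non-negative harmonic function $u$ vanishing continuously on $B(\xi,r)\cap\d\Omega$ satisfies $u(x)\lec (|x-\xi|/r)^{\alpha}\sup_{B(\xi,r)\cap\Omega}u$ for some $\alpha>0$; and (ii) the Harnack-chain characterization of semi-uniformity in Theorem~\ref{SU}, which combined with the definition of reference points supplies, for each $x\in 2B\cap\Omega$, a Harnack chain from $x$ to some $y_{j}$ of length $\lec \log(r_{B}/\delta_{\Omega}(x))+1$, contained in $MB$ by the hypothesis on $M$.

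Let $S=\sup_{B\cap\Omega}u$, which is finite because $u$ extends continuously by zero to $B\cap\d\Omega$ and $\overline{B}$ is compactly contained in $MB$. Choose $x_{0}\in\overline{B\cap\Omega}$ attaining $S$; if $x_{0}\in\d\Omega$ then $S=0$, so assume $x_{0}\in 2B\cap\Omega$. Fix a small $\eta\in(0,1/2)$ depending on $\alpha$ and on the reference-point constants. \emph{Deep case.} If $\delta_{\Omega}(x_{0})\geq \eta r_{B}$, the reference-point property for $2B$ (using $N_{\Omega}\sim k_{\Omega}+1$) yields some $y_{j}$ joined to $x_{0}$ by a Harnack chain of length $\lec_{\eta}1$ lying inside $MB$, and Harnack's inequality gives $u(x_{0})\lec_{\eta} u(y_{j})\leq \sum_{i}u(y_{i})$. \emph{Shallow case.} If $\delta_{\Omega}(x_{0})<\eta r_{B}$, let $\xi_{0}\in\d\Omega$ be closest to $x_{0}$; then $\xi_{0}\in(1+\eta)B$ and $B(\xi_{0},r_{B}/2)\subseteq 2B$, so Lemma~\ref{l:holder} gives
\[
u(x_{0})\;\lec\;(2\eta)^{\alpha}\sup_{2B\cap\Omega}u.
\]

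To close the argument, I apply the same dichotomy to $T:=\sup_{2B\cap\Omega}u$, which is finite by compactness of $\overline{2B}$ in $MB$. In the deep subcase the sup is attained at a point of depth $\gec \eta r_{B}$ from $\d\Omega$, and a bounded-length Harnack chain inside $MB$ to some $y_{j}$ produces $T\lec_{\eta}\sum_{i}u(y_{i})$; combining with the shallow-case estimate at $x_{0}$ yields $S\lec(2\eta)^{\alpha}\sum_{i}u(y_{i})\lec\sum_{i}u(y_{i})$ and we are done. In the shallow subcase the argument is iterated: each application of Lemma~\ref{l:holder} contributes a fixed factor $C(2\eta)^{\alpha}<1$ (for $\eta$ small), while forcing the maximizing value of $u$ to grow by a fixed reciprocal factor at each step; since $u$ is uniformly bounded on compact subsets of $MB$, the iteration must terminate at a deep point in boundedly many steps, and composing the geometric series of H\"older factors with the final Harnack comparison to a $y_{j}$ produces the desired bound. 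The principal obstacle is keeping the iteration inside the region of harmonicity: when the maximizing point $z^{*}$ lies near $\d(2B)$ the natural H\"older ball around the closest boundary point may spill outside $2B$, where $u$ is not guaranteed to vanish on $\d\Omega$. The resolution is exactly the hypothesis on $M$ together with Theorem~\ref{SU}: by choosing $M$ large enough (which is always possible by semi-uniformity), every point of $2B\cap\Omega$ reaches some $y_{j}$ via a Harnack chain inside $MB$, so at each iteration step one can detour to a deep interior point through $MB\cap\Omega$ without exiting the set where $u$ is harmonic and vanishes on the portion of $\d\Omega$ we need.
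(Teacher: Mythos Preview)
Your overall strategy is the paper's strategy: both follow Jerison--Kenig \cite[Lemma~4.4]{JK82}, combining the CDC H\"older decay (Lemma~\ref{l:holder}) with Harnack chains to the reference points. However, the iteration you describe has a genuine gap. As you set it up, the first H\"older application bounds $\sup_{B\cap\Omega}u$ by $C\eta^{\alpha}\sup_{2B\cap\Omega}u$, and you then propose to ``apply the same dichotomy to $T=\sup_{2B\cap\Omega}u$'' and iterate. Iterating in this direction forces the supremum to be taken over ever-larger balls, but $u$ is only assumed to vanish on $2B\cap\d\Omega$, so Lemma~\ref{l:holder} is unavailable after the first step. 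You correctly flag this as ``the principal obstacle,'' but your proposed resolution---detouring through $MB\cap\Omega$ via a Harnack chain---addresses the Harnack step, not the H\"older step, and does not keep the H\"older balls inside $2B$. Enlarging $M$ does not help either.

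The ingredient you are missing, and which the paper isolates explicitly before invoking \cite{JK82}, is the quantitative \emph{depth-versus-value} relation: normalizing $\max_{i}u(y_{i})=1$, the reference-point property and Harnack's inequality along chains in $MB$ give a constant $M_{2}>1$ such that $u(y)>M_{2}^{h}$ for $y\in 2B\cap\Omega$ forces $\delta_{\Omega}(y)<M_{1}^{-h}r_{B}$ (here $M_{1}$ is the H\"older-decay constant). With this in hand the iteration runs the \emph{opposite} way, with \emph{shrinking} balls: if $u(x_{k})>M_{2}^{N+k}$ then $\delta_{\Omega}(x_{k})<M_{1}^{-N-k}r_{B}$, so one may apply Lemma~\ref{l:holder} in the small ball $B(\xi_{k},M_{1}^{-N-k+1}r_{B})$ to find $x_{k+1}$ with $u(x_{k+1})>M_{2}^{N+k+1}$. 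Since $|x_{k+1}-x_{k}|\lec M_{1}^{-N-k+1}r_{B}$ and these increments sum to $\lec M_{1}^{-N+1}r_{B}$, all the $x_{k}$ and all the H\"older balls remain inside $2B$ automatically (for $N$ large), and $u(x_{k})\to\infty$ contradicts the boundedness of $u$ on $\overline{2B}\cap\Omega$. No further appeal to $M$ is needed at this stage; the role of $M$ is solely to make the Harnack chains (hence the depth-versus-value relation) available.
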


\begin{proof}
The proof of this is almost exactly like that of \cite[Lemma 4.4]{JK82}, we just point out the required modifications in its proof. First, \cite[Lemma 4.1]{JK82} still holds in CDC domains, as it is just Lemma \ref{l:holder}. In particular, there is $M_{1}$ so that for any $\xi\in \d\Omega$ and $s<r_{B}$, 
\[
\sup\{u(x): x\in B(\xi,M_{1}^{-1}s)\cap \Omega \}<\frac{1}{2} \sup\{u(x): x\in B(\xi,s)\cap \Omega \}.
\]

Next, if we assume $\max u(y_{i})=1$, then using Harnack chains and the fact that the $y_{1},...,y_{n}$ are reference points for $2B$, one can show that there is $M_{2}>1$ depending on $M_{1}$ and the reference point constants so that if $u(y)>M_{2}^{h}$ for an integer $h$ and $y\in 2B\cap \Omega$, then $\delta_{\Omega}(y)<M_{1}^{-h}$. The proof now follows that of \cite[Lemma 4.4]{JK82} word by word.
\end{proof}

A domain $\Omega$ satisfies the {\bf boundary Harnack principle (BHP)} if there is $M\geq 1$ so that, if $u,v$ are non-negative harmonic functions vanishing continuously on $MB\cap \d\Omega$ and $x_{0}$ is a corkscrew point in $B$, then
\[
\frac{u(x)}{v(x)}\sim \frac{u(x_{0})}{v(x_{0})} \;\; \mbox{for all} \;\; x\in B\cap \Omega.
\]
This is shown for NTA domains in \cite[Lemma 4.10]{JK82} and was a key ingredient in Jerison and Kenig's proof of \eqref{e:jkw}, see \cite[Lemma 4.11]{JK82}. However, Aikawa has shown that, if $\Omega$ is a CDC John domain, the BHP is equivalent to $\Omega$ being a uniform domain \cite{Aik06}, so we can't expect such an estimate to hold in our setting. The following lemma serves as a weak substitute for the BHP in semi-uniform domains, and its proof is based on that of \cite[Lemma 4.10]{JK82}. 

\begin{lemma}
Let $\Omega\subseteq \R^{d+1}$ be a semi-uniform CDC domain. Then there are constants $2M_{0}<M_{1}$ depending on the semi-uniformity constants so that the following holds. Let $B$ a ball centered on $\d\Omega$ with $r_{B}<\diam \d\Omega$, and $y_{1},...,y_{n}\in B$ be reference points for $B$.  Let $u$ be a non-negative harmonic function on $M_{1}B\cap \Omega$ that vanishes continuously on $2M_{0}B\cap \d\Omega$. Then
\begin{equation}
\label{e:bhp}
u(x)\lec \ps{\sum_{i=1}^{n} u(y_{i})} \sum_{i=1}^{n} r_{B}^{d-1}G_{\Omega}(x,y_{i}) \;\; \mbox{ for all }x\in B. 
\end{equation}
\end{lemma}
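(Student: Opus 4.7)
Plan: The statement is a one-sided, multi-pole boundary Harnack principle, so the natural approach is to follow the proof of the BHP for NTA domains in \cite[Lemma 4.10]{JK82}, replacing the NTA Harnack chain axiom by semi-uniformity (Theorem \ref{SU}) and replacing the single corkscrew point by the reference set $y_1,\dots,y_n$. First I would normalize $\sum_i u(y_i)=1$ and apply the Carleson estimate (Lemma \ref{l:carleson}) on the enlarged ball $M_0 B$; this is legal because $u$ vanishes continuously on $2M_0B\cap \d\Omega$ and is harmonic on the larger $M_1B\cap \Omega$, so choosing $M_1$ a sufficiently large multiple of $M_0$ makes Carleson's Harnack-chain reach fit. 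Carleson then produces $\sup_{M_0B\cap \Omega}u\lec 1$; a short intermediate step uses semi-uniformity to identify the reference points for $2M_0B$ required by Lemma \ref{l:carleson} with the given reference points for $B$, via bounded Harnack chains.

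With $v(z):=\sum_i r_B^{d-1} G_\Omega(z,y_i)$, I would then prove $u\leq Cv$ on $B\cap \Omega$ by the maximum principle applied to $Cv-u$ on the domain $D := M_0 B\cap \Omega\setminus \bigcup_i \cnj{B(y_i,\ve r_B)}$, with $\ve>0$ small depending only on the corkscrew/semi-uniformity constants. Both functions are harmonic on $D$, so it suffices to verify $u\leq Cv$ on $\d D$, which splits into three pieces. On $M_0B\cap \d\Omega$ both $u$ and $v$ vanish continuously by CDC regularity. On each inner sphere $\d B(y_i,\ve r_B)\cap \Omega$, Harnack's inequality near the corkscrew-like point $y_i$ yields $r_B^{d-1}G(z,y_i)\gec \ve^{1-d}$, so $v\gec \ve^{1-d}$ while $u\lec 1$, and choosing $C\gec \ve^{d-1}$ handles this piece. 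On the outer sphere $\d(M_0B)\cap \Omega$, the subcase $\delta_{\Omega}(z)\gec r_B$ is immediate: Theorem \ref{SU} produces a Harnack chain of bounded length from $z$ to some reference $y_j$, giving at once $u(z)\sim u(y_j)\leq 1$ and $r_B^{d-1}G(z,y_j)\sim 1$.

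The main obstacle is the remaining subcase $z\in \d(M_0B)\cap \Omega$ with $\delta_\Omega(z)\ll r_B$. The H\"older estimate (Lemma \ref{l:holder}) immediately yields $u(z)\lec (\delta_\Omega(z)/r_B)^\alpha$, and a matching lower bound for $v(z)$ is required. A full boundary Harnack principle, which would equate the decay rates of $u$ and Green's function, is not available in the CDC semi-uniform setting, so I would replace it with the following local-scale substitute: apply Lemma \ref{l:AHlemma} at the small ball $B(\zeta,\delta_\Omega(z))$, where $\zeta\in \d\Omega$ is the closest boundary point to $z$ and $z$ itself serves as a reference point for this ball, and combine it with Bourgain's estimate (Lemma \ref{l:bourgain}) at scale $r_B$ together with the logarithmic-length Harnack chain from scale $\delta_\Omega(z)$ up to scale $r_B$ (whose existence is essentially the remark following the proof of Theorem \ref{thmi}) to relate $G(z,y_j)$ to $u(z)$ through the chain of comparisons. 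Once $u\leq Cv$ is verified on all of $\d D$, the maximum principle gives $u\leq Cv$ on $D$, and a direct Harnack comparison handles the remaining points of $B\cap B(y_i,\ve r_B)$. The delicate boundary analysis on the outer sphere when $\delta_\Omega(z)\ll r_B$ is thus the principal technical content of the proof.
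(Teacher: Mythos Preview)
Your overall strategy---normalize, apply Carleson, then compare $u$ to $v:=\sum_i r_B^{d-1}G_\Omega(\cdot,y_i)$ via the maximum principle---is the right shape, but the domain on which you run the maximum principle is wrong, and this creates a genuine gap at exactly the point you flag as ``the principal technical content.''

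On $\d(M_0B)\cap\Omega$ with $\delta_\Omega(z)\ll r_B$, you need $u(z)\lec v(z)$. The H\"older estimate (Lemma \ref{l:holder}) gives an \emph{upper} bound $u(z)\lec (\delta_\Omega(z)/r_B)^\alpha$, but the very same estimate applies to $v$, so it cannot separate them. What you actually need is a \emph{lower} bound $v(z)\gec (\delta_\Omega(z)/r_B)^\alpha$ with the \emph{same} exponent $\alpha$. Your proposed route---Lemma \ref{l:AHlemma} at the small ball plus doubling/Bourgain to climb from scale $\delta_\Omega(z)$ to scale $r_B$---produces at best $v(z)\gec (\delta_\Omega(z)/r_B)^\beta$ where $\beta$ depends on the doubling constant, and there is no reason for $\beta\leq\alpha$. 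Matching the decay rates of two positive harmonic functions vanishing on $\d\Omega$ is precisely the boundary Harnack principle, and the paper points out (citing Aikawa) that the BHP is \emph{equivalent} to uniformity for CDC John domains; it genuinely fails in the merely semi-uniform setting. So this step cannot be completed with the tools you list.

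The paper avoids this obstacle entirely by changing the comparison domain. Instead of $M_0B\cap\Omega$, it builds an auxiliary domain $\Omega'\subseteq M_0B\cap\Omega$ as a union of dilated Whitney cubes lying along the Harnack chains from each cube meeting $2B$ back to the reference points $y_i$. The point is that $\d\Omega'\cap\Omega$ then consists only of Whitney-cube faces, and the portion $L$ of it near $\d\Omega$ can be covered by finitely many balls $B_j$ of radius $\eta r_B$ for which the $y_i$ are reference points \emph{with respect to $\Omega'$} (the tube structure supplies the needed chains). Lemma \ref{l:AHlemma} applied in $\Omega'$ then bounds $\omega_{\Omega'}^x(L)$ directly by $\sum_i r_B^{d-1}G_{\Omega'}(x,y_i)\leq v(x)$, with no boundary comparison of $u$ and $v$ ever required. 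On $\d\Omega'\setminus L$ every point is at distance $\gec r_B$ from $\d\Omega$ and hence reaches some $y_i$ by a bounded chain, which is the easy case you already handled. The construction of $\Omega'$ is the idea your argument is missing.
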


\begin{proof}

Let $\cW$ denote the Whitney cubes in $\Omega$. Pick $M_{0}$ large enough (depending on the semi-uniformity constants) so that each $x\in 2B\cap \Omega$ can be connected to one of the $y_{i}$ by a Harnack chain of length $N_{\Omega}(x,y_i)$ so that $2Q\subseteq M_{0}B$ for each Whitney cube $Q$ for $\Omega$ that intersects the Harnack chain. Note that by semi-uniformity, we also know that the $y_{1},...,y_{n}$ are also reference points for $2M_{0}B$ with different reference point constants: each point $x\in 2M_{0} B\cap \Omega$ can be connected by a Harnack chain of length $\lec \log\frac{2M_{0}r_{B}}{\delta_{\Omega}(x)}+1$ to a corkscrew point in $B$, and this can be extended by a bounded number of balls to one of the $y_{i}$ of total length $N_{\Omega}'(x,y_{i})\sim N_{\Omega}(x,y_{i})$, say. Now pick $M_{1}$ large enough (depending on the new reference point constants) so that each $x\in 2M_{0}B\cap \Omega$ can be connected to a $y_{i}$ by a Harnack chain of length $N_{\Omega}'(x,y_{i})$ contained in $M_{1}B$.

Without loss of generality, $\sum_{i=1}^{n} u(y_{i})=1$. For each $Q\in \cW$ intersecting $2B$, there is $\{R_{i}^{Q}\}_{i=1}^{N_{Q}}$ a chain of cubes, the first containing one of the $y_{j}$, the last equalling $Q$, where 
\[
N_{Q}\lec 1+\log\frac{r_{B}}{\ell(Q)}.\] 
Let $\lambda>1$ be small and
\[
\Omega' := \bigcup_{Q\in \cW \atop Q\cap 2B\neq\emptyset} \bigcup_{i=1}^{N_{Q}} \lambda R_{i}^{Q} \supseteq 2B.
\]
Note that by construction and our choice of $M_{0}$ that
\[
2B\cap \Omega \subseteq \Omega' \subseteq M_{0}B\cap \Omega.
\]

By Lemma \ref{l:carleson} and our choice of $M_{1}$, 
\[
\sup_{M_{0}B\cap \Omega} u\lec \sum u(y_{i})\leq 1.\]
In particular, by the maximum principle we have
\begin{equation}
\label{e:u<w}
u(x) \leq \omega_{\Omega'}^{x}(\d\Omega'\cap \Omega) \;\;\mbox{ for all }x\in \Omega'.
\end{equation}
Let $\{B_{i}\}_{i=1}^{m}$ be a finite collection of balls (with $m$ depending only on $d$) of radius $\eta r_{B}$ (where $\in (0,\eta)$ will be chosen later) centered along $\d\Omega'$ that cover 
\[
L=\{x\in \d\Omega': \delta(x)<\eta^{2}r_{B}\}.
\]

See Figure \ref{f:outball}.

\begin{figure}[!ht]
\includegraphics[width=350pt]{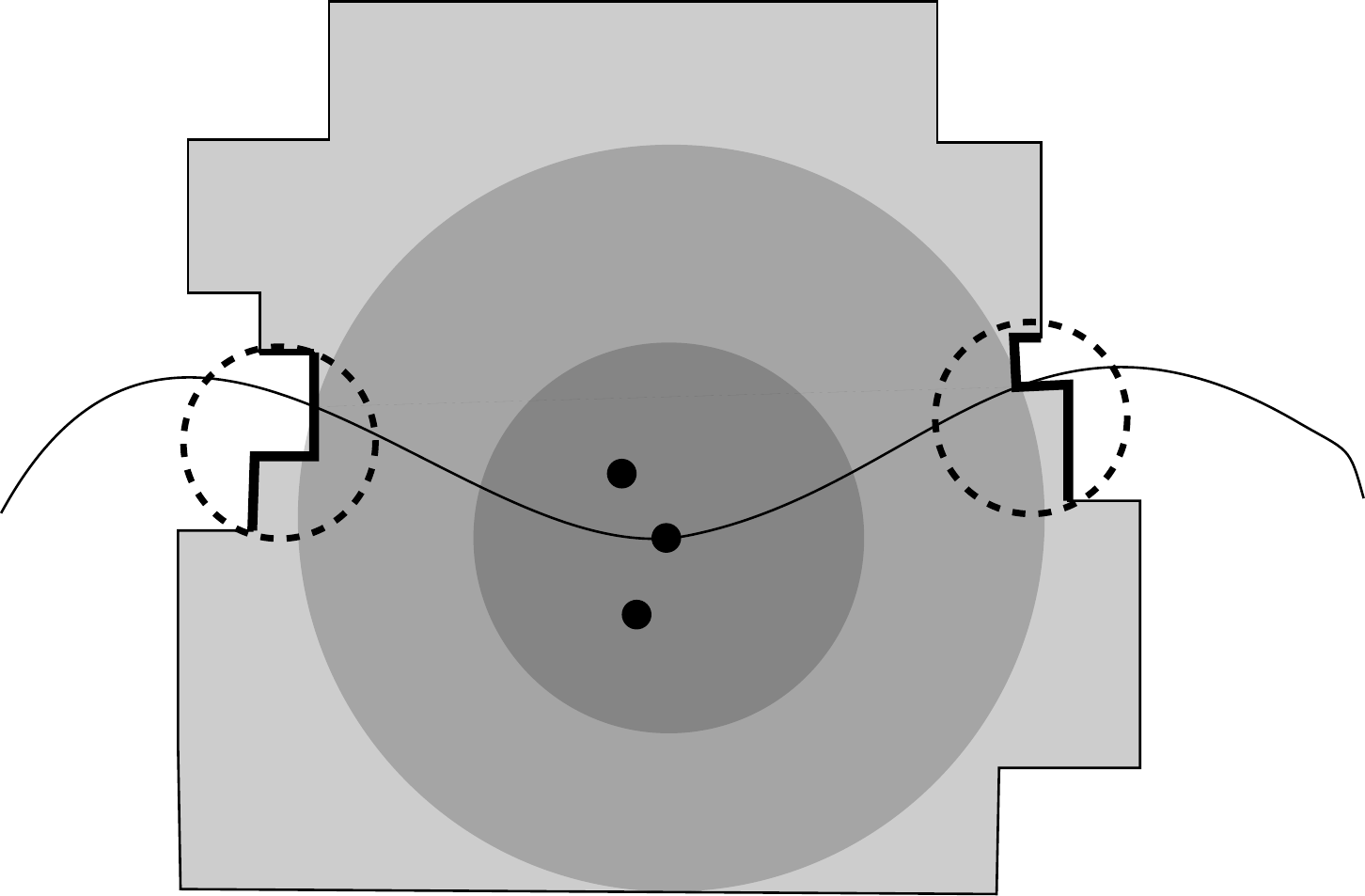}
\begin{picture}(0,0)(350,0)
\put(25,100){$B_{i}$}
\put(50,25){$\Omega'$}
\put(160,10){$2B$}
\put(200,75){$B$}
\put(292,100){$B_{\ell}$}
\put(150,60){$y_{j}$}
\put(150,120){$y_{k}$}
\put(0,90){$\d\Omega$}
\end{picture}
\caption{The dark lines denote the set $L\subseteq \d\Omega'$.}
\label{f:outball}
\end{figure}

{\bf Claim:} for each $i$, the points $y_{1},...,y_{n}$ are reference points for $2B_{i}$ with respect to $\Omega'$. 

Let $x\in B_{i}\cap \Omega'$, so $x\in \lambda Q$ for some $Q\in \cW$. 
Then $Q=R_{i}^{Q''}$ for $i$ and some cube $Q''\in \cW$ such that $Q''\cap B\neq\emptyset$ and $R_{1}^{Q''}$ contains $y_{j}$ for some $j\in \{1,2,...,n\}$. Moreover, $i\lec 1+\log \frac{r_{B}}{\ell(Q)}$. Thus, we can find a Harnack chain in $\bigcup_{k=1}^{i} \lambda R_{k}^{Q''} \subseteq \Omega'$ from $y_{j}$ to $x_{Q}$ (the center of $Q$) of length at most a constant times $1+\log\frac{r_{B}}{\ell(Q)}$. If $\delta_{\Omega'}(x)\geq \ve  \ell(Q)$, then  we can add on a finite number (depending on $\ve$) of balls to complete this chain into a Harnack chain from $y_{j}$ to $x$. If $\delta_{\Omega'}(x)<\ve \ell(Q)$, then for $\ve>0$ small enough (depending on $\lambda$), $\delta_{\lambda Q}(x)\sim \delta_{\Omega'}(x)$, and since $\lambda Q$ is chord-arc, we can connect $x$ to $x_{Q}$ by a Harnack chain of length at most a constant times
\[
\log \frac{\ell(Q)}{\delta_{\lambda Q}(x)}+1\sim  \log \frac{\ell(Q)}{\delta_{\Omega'}(x)}+1\]
and then we can connect $y$ to $x_{Q}$ by a Harnack chain of length at most a constant times $ \log \frac{r_{B}}{\delta_{\Omega'}(x)}+1$.

In either case, by summing up the estimates for the lengths, we obtain a Harnack chain of length at most $\log \frac{r_{B}}{\delta_{\Omega'}(x)}+1$ from $x$ to some $y_{i}$. This completes the claim. 

For $\eta$ small enough, we can ensure that, for each $j$, $B\cap 2B_{j}=\emptyset$, and so Lemma \ref{l:AHlemma}, the maximum principle implies that for all $x\in B$,
\begin{equation}
\label{e:omegaL1}
\omega_{\Omega'}^{x}(L)
\leq \sum_{j=1}^{m}\omega_{\Omega'}^{x}(B_{j})
\lec \sum_{j=1}^{m}\sum_{i=1}^{n} r_{B_{j}}^{d-1} G_{\Omega'}(x,y_{i})
\lec \sum_{i=1}^{n} r_{B}^{d-1} G_{\Omega}(x,y_{i}).
\end{equation}

If $x\in \d\Omega'\backslash L$, then by construction, there is a short Harnack chain from $x$ to one of the $y_{i}$, and thus to a point $z_{i}\in \d B(y_{i},\ve cr_{B})$, where $c$ is the corkscrew constant and $\ve>0$ is small enough so that $B(y_{i},2\ve cr_{B})$ are disjoint (we can replace them with other reference points so that this happens by Remark \ref{r:ref}). Since 
\[
 1 
\lec r_{B}^{d-1}G_{\Omega}(z_{i},y_{i})
\lec r_{B}^{d-1}G_{\Omega}(x,y_{i}),\]
we have $\sum_{j} r_{B}^{d-1}G_{\Omega}(x,y_{j})\gec 1$ for $x\in \d\Omega'\backslash L\cup \bigcup \d B(y_{j},\ve cr_{B})$, and so by the maximum principle on $\Omega'\backslash \bigcup \d B(y_{j},\ve cr_{B})$ that
\[
\omega_{\Omega'}^{x}(\d\Omega'\backslash L)\lec \sum_{i=1}^{n}  r_{B}^{d-1} G_{\Omega}(x,y_{i}) \;\; \mbox{ for all }x\in \Omega'.
\]
Combining the above estimates and using the bounded overlap of the $B_{j}$, we obtain that for $x\in B\cap \Omega$,
\[
u(x)
\stackrel{\eqref{e:u<w}}{\leq} 
\omega_{\Omega'}^{x}(\d\Omega\cap \Omega)
\leq \omega_{\Omega'}^{x}(L) + \omega_{\Omega'}^{x}(\d\Omega'\backslash L)
\lec   \sum_{i=1}^{n}r_{B}^{d-1} G_{\Omega}(x,y_{i})
\]
and this gives \eqref{e:bhp}.
\end{proof}

\begin{proof}[Proof of Lemma \ref{l:mainhmlemma}]
Let $y_{1},...,y_{n}\in \frac{1}{4}B$ be reference points for $B$ and $z_{1},...,z_{n}\in B'$ be reference points for $B'$. Then for $x\in \Omega\backslash MB$ and $M$ large enough, Lemma \ref{l:AHlemma} implies
\[
\omega_{\Omega}^{x}(B')
\lec r_{B'}^{d-1}  \sum_{i=1}^{n} \underbrace{G_{\Omega}(x,z_{i})}_{=: u(z_{i})}.
\]
Note that by \eqref{e:bhp},
\[
u(z_{i})
\lec \ps{\sum_{j=1}^{n} u(y_{j})}\ps{\sum_{j=1}^{n} r_{B}^{d-1} G_{\Omega}(z_{i},y_{j})}
\]
By \eqref{w>G} and the doubling property,
\[
u(y_{i}) = G_{\Omega}(x,y_{i}) 
\lec r_{B}^{1-d} \omega_{\Omega}^{x}(B).
\]
And similarly, 
\[
G_{\Omega}(z_{i},y_{j})
\lec r_{B'}^{1-d}\omega_{\Omega}^{y_{j}}(B').\]

So the above estimates combined give
\[
\omega_{\Omega}^{x}(B')
\lec \omega_{\Omega}^{x}(B)\sum_{i=1}^{n} \omega_{\Omega}^{y_{i}}(B').\]

\end{proof}

\begin{remark}
It's natural to ask whether we can get away with just one $y_{j}$ in the above estimate instead of having to sum over all reference points, or in other words, whether $\omega_{\Omega}^{x_{1}}(B)\sim \omega_{\Omega}^{x_{2}}(B)$ for any two corkscrew points $x_{1},x_{2}\in B$, but this is not the case. If we consider the von Koch snowflake, or any  NTA domain $\Omega_{1}$ whose exterior domain $\Omega_{2}=(\Omega_{1}^{c})^{\circ}$ is also NTA and whose common boundary is purely unrectifiable boundary, then $\omega_{\Omega_{1}}^{x_{1}}$ and $\omega_{\Omega_{2}}^{x_{2}}$ must be mutually singular. If we remove a ball $B_{0}$ from the boundary of this domain, we now have a connected domain $\Omega=\Omega_{1}\cup \Omega_{2}\cup B_{0}$, and if $x_{1}$ and $x_{2}$ are two corkscrew points for some fixed ball $B$ centered on the boundary away from $B_{0}$ (for $\Omega_{1}$ and $\Omega_{2}$ respectively), then as the ball shrinks, $\omega_{\Omega}^{x_{i}}\warrow \omega_{\Omega_{i}}^{x_{i}}$. In particular, we can find a ball $B'\subseteq B$ so that $\omega_{\Omega_{1}}^{x_{1}}(B') \ll \omega_{\Omega_{2}}^{x_{2}}(B')$, say, and then this will imply, for $B_{0}$ small enough, $\omega_{\Omega}^{x_{1}}(B') \ll \omega_{\Omega}^{x_{2}}(B')$.
\end{remark}

\section{Proof of Theorem \ref{thmiii}: Part II}

The objective of this section is to prove the counterpart to Lemma \ref{l:mainhmlemma}.

\begin{lemma}
\label{l:mainhmlemma2}
Let $\Omega\subseteq \R^{d+1}$ be a semi-uniform CDC domain,  $B$ a centered on $\d\Omega$ with $r_{B}<\diam \d\Omega/4$, $E\subseteq \d\Omega\cap B$ Borel, and $x_{1},...,x_{n}\in \Omega$ a set of reference points for $B$. There is $M$ depending on the CDC and doubling constants and integer $i$ so that 
\[
\frac{\omega_{\Omega}^{x}(E)}{\omega_{\Omega}^{x}(B)}\gec  \min_{i=1,,...,n}\omega_{\Omega}^{x_{i}}(E) \mbox{ for all }x\in \Omega\backslash MB.
\]

\end{lemma}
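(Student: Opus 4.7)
My plan is to prove the reverse inequality of Lemma~\ref{l:mainhmlemma} by comparing the harmonic function $u(y):=\omega^y_\Omega(E)$ with the rescaled Green's functions $w_i(y):=\omega^{x_i}_\Omega(E)\,r_B^{d-1}G_\Omega(y,x_i)$ via the maximum principle on $\Omega\setminus\overline{B(x_i,\alpha r_B)}$ for a suitably small $\alpha$, and then converting the resulting Green's-function lower bound into one involving $\omega^x_\Omega(B)$ using Lemma~\ref{l:AHlemma}.

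By Remark~\ref{r:ref}(2) I may assume (at the cost of adjusting the reference-point constants) that each $x_i$ is a corkscrew point in $B$, so $\delta_\Omega(x_i)\gec r_B$. Pick $\alpha>0$ small enough that $\overline{B(x_i,2\alpha r_B)}\subseteq \Omega$ for every $i$, and take $M=2$, so that every $x\in\Omega\setminus MB$ lies outside each $B(x_i,\alpha r_B)$. Fix $i$ and set $V_i:=\Omega\setminus\overline{B(x_i,\alpha r_B)}$; both $u$ and $w_i$ are nonnegative and harmonic on $V_i$. On $\partial\Omega$, $w_i\equiv0$ because $G_\Omega(\cdot,x_i)$ vanishes on $\partial\Omega$, and $u\geq0$. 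On $\partial B(x_i,\alpha r_B)$, Harnack inside $B(x_i,2\alpha r_B)\subseteq\Omega$ gives $u(y)\gec_\alpha \omega^{x_i}_\Omega(E)$, while the standard Green's-function upper bound yields $r_B^{d-1}G_\Omega(y,x_i)\lec_\alpha 1$ (via comparison with the whole-space Green's function when $d\geq 2$; for $d=1$, using the local logarithmic singularity plus a bounded harmonic correction controlled by \eqref{w>G} at scale $\delta_\Omega(x_i)\sim r_B$). Therefore, for a small but fixed $c>0$ (depending on $\alpha$ and the CDC constants), $u-cw_i\geq 0$ on all of $\partial V_i$, and the maximum principle gives
\[
\omega^x_\Omega(E)\gec \omega^{x_i}_\Omega(E)\,r_B^{d-1}G_\Omega(x,x_i)\qquad \text{for every }x\in\Omega\setminus MB.
\]

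To combine these estimates, I invoke Lemma~\ref{l:AHlemma}. The $x_i$ also serve as reference points for $2B$ (with possibly enlarged constants): any $y\in 2B$ can be connected by semi-uniformity to a corkscrew point in $B$, which is in turn Harnack-close to some $x_i$. Moreover, by Remark~\ref{r:ref}(4) the number $n$ is bounded by a constant depending only on the semi-uniformity. Thus Lemma~\ref{l:AHlemma} yields $\omega^x_\Omega(B)\lec r_B^{d-1}\sum_{i=1}^{n}G_\Omega(x,x_i)$. Summing the preceding display over $i$ and bounding each $\omega^{x_i}_\Omega(E)\geq\min_j\omega^{x_j}_\Omega(E)$,
\[
n\,\omega^x_\Omega(E)\gec \min_{j}\omega^{x_j}_\Omega(E)\sum_{i=1}^{n}r_B^{d-1}G_\Omega(x,x_i)\gec \min_j\omega^{x_j}_\Omega(E)\cdot\omega^x_\Omega(B),
\]
and dividing by $\omega^x_\Omega(B)$ (and absorbing the bounded factor $n$) yields the claim.

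The main technical obstacle is the Green's-function upper bound on the sphere $\partial B(x_i,\alpha r_B)$, particularly in dimension $d=1$ where there is no global whole-space Green's function to compare to. Here one exploits the local representation of $G_\Omega(\cdot,x_i)$ as a logarithmic singularity plus a bounded harmonic correction, the boundedness of the correction being supplied by \eqref{w>G} applied at a ball of radius comparable to $\delta_\Omega(x_i)\sim r_B$. The remaining work is careful bookkeeping of constants depending on $\alpha$, the CDC and semi-uniformity constants, and the (bounded) number of reference points. Note that no form of boundary Harnack is used---only Harnack inside a definite interior ball around each $x_i$ and the maximum principle on the exterior region $V_i$---so the argument does not run into the known failure of BHP in semi-uniform (non-uniform) domains.
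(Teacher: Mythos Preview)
Your argument is correct and takes a genuinely different---and more direct---route than the paper's proof.

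The paper argues by constructing an auxiliary subdomain $\Omega'\subseteq\Omega$ assembled from dilated Whitney cubes along Harnack chains connecting $\Omega\setminus B$ to the reference points. It applies Lemma~\ref{l:AHlemma} inside $\Omega'$, converts the resulting Green-function sum into harmonic measure of small balls on $\partial\Omega'$ via \eqref{w>G}, and then uses the strong Markov property together with the observation that $\partial\Omega\cap B\subseteq(\Omega')^c$ (so $\omega_\Omega^x(B)\leq\omega_{\Omega'}^x(2B)$) to transfer back to $\Omega$. Your proof bypasses all of this: you work entirely in $\Omega$, comparing $\omega_\Omega^{\,\cdot}(E)$ directly to $r_B^{d-1}G_\Omega(\cdot,x_i)$ on $\Omega\setminus\overline{B(x_i,\alpha r_B)}$ via the maximum principle, and then close with a single application of Lemma~\ref{l:AHlemma} in $\Omega$. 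The gain is brevity and transparency; you avoid the auxiliary-domain construction and the strong Markov step. The paper's route, on the other hand, is perhaps more robust in that it never needs a pointwise upper bound on $G_\Omega$ near its pole---it only uses \eqref{w>G} and Lemma~\ref{l:AHlemma}, both of which are dimension-uniform---whereas your argument leans on $r_B^{d-1}G_\Omega(y,x_i)\lesssim_\alpha 1$ on $\partial B(x_i,\alpha r_B)$, which in the planar case $d=1$ requires the extra justification you sketch. That point is standard but worth making fully explicit: one clean way is to note that $G_\Omega(\cdot,x_i)-G_{B(x_i,\delta_\Omega(x_i))}(\cdot,x_i)$ is nonnegative harmonic on the corkscrew ball, hence by Harnack its value at any $y\in\partial B(x_i,\alpha r_B)$ is comparable to its value on $\partial B(x_i,\tfrac12\delta_\Omega(x_i))$, and the latter is controlled by \eqref{w>G}.
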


For this, we will need different estimates on Green's function. 

\begin{lemma}
Let $\d\Omega\subseteq \R^{d+1}$ be a semi-uniform domain and $B$ a ball centered on $\d\Omega$. Let $x_{1},...,x_{n}\in \Omega\backslash 2B$ be reference points for $B$, and for each $x\in \Omega$, let $\{Q_{i}(x)\}_{i=1}^{N(x)}$ be a Harnack chain of cubes from one of the $x_{1},...,x_{n}$ to $x$ so that $N(x)\lec  \log \frac{r_{B}}{\delta_{\Omega}(x)}+1$ (which exists by semi-uniformity). Let
\[
\Omega'=\bigcup_{x\in \Omega\backslash B}\bigcup_{i=1}^{N(x)}\lambda Q_{i}(x).
\]
Then $x_{1},...,x_{n}$ are reference points for $\Omega'\cap 2B$. 
\end{lemma}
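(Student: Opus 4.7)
The plan is to imitate the structure of the analogous claim proved inside the last lemma of the previous section (the subclaim that the reference points $y_j$ are reference points for $2B_i$ with respect to the $\Omega'$ built there). Fix an arbitrary $x \in 2B \cap \Omega'$. By the definition of $\Omega'$ there exist $z \in \Omega \setminus B$ and $j \in \{1,\dots,N(z)\}$ with $x \in \lambda Q_j(z)$, where the chain $Q_1(z),\dots,Q_{N(z)}(z)$ starts at some reference point $x_{k(z)}$. The prefix $Q_1(z),\dots,Q_j(z)$ is a Harnack chain in $\Omega$ connecting $x_{k(z)}$ to the center $x_{Q_j(z)}$, and replacing each $Q_i(z)$ by a concentric ball of radius $\sim \ell(Q_i(z))$ gives a Harnack chain in $\Omega'$: the balls sit inside $\lambda Q_i(z) \subseteq \Omega'$, and on them $\delta_{\Omega'} \leq \delta_\Omega \sim \ell(Q_i(z))$ while also $\delta_{\Omega'} \geq \delta_{\lambda Q_i(z)} \sim \ell(Q_i(z))$. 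Using the fact that $\ell(Q_i(z))$ changes geometrically along such a chain, the length of this prefix is $\lesssim \log\frac{r_B}{\ell(Q_j(z))}+1$.

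It remains to extend this chain inside $\Omega'$ from $x_{Q_j(z)}$ to $x$ and to convert $\log(r_B/\ell(Q_j(z)))$ into $\log(r_B/\delta_{\Omega'}(x))$. I would split into two subcases depending on a small parameter $\varepsilon=\varepsilon(\lambda)$. If $\delta_{\Omega'}(x) \geq \varepsilon \ell(Q_j(z))$, a bounded (depending on $\varepsilon$) number of additional balls suffices to reach $x$ and $\log(r_B/\ell(Q_j(z))) \lesssim \log(r_B/\delta_{\Omega'}(x))+1$, so the total length is controlled. If instead $\delta_{\Omega'}(x) < \varepsilon \ell(Q_j(z))$, then because $\lambda Q_j(z)\subseteq \Omega'$ the point $x$ must lie in a thin collar of $\partial(\lambda Q_j(z))$, and one argues that $\delta_{\Omega'}(x) \sim \delta_{\lambda Q_j(z)}(x)$; then, as $\lambda Q_j(z)$ is chord-arc, one can connect $x_{Q_j(z)}$ to $x$ inside $\lambda Q_j(z)$ by a chain of length $\lesssim \log\frac{\ell(Q_j(z))}{\delta_{\lambda Q_j(z)}(x)}+1 \sim \log\frac{\ell(Q_j(z))}{\delta_{\Omega'}(x)}+1$. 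Concatenating with the first piece telescopes to the desired bound $\lesssim \log\frac{r_B}{\delta_{\Omega'}(x)}+1$.

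The main obstacle is the identity $\delta_{\Omega'}(x) \sim \delta_{\lambda Q_j(z)}(x)$ in the small-$\delta_{\Omega'}(x)$ regime. A priori $\Omega'$ is a union of enlarged Whitney cubes of $\Omega$ from many Harnack chains, so another piece $\lambda Q_{j'}(z')$ abutting $\lambda Q_j(z)$ might push $\partial\Omega'$ closer to $x$ than $\partial(\lambda Q_j(z))$; this is precisely what must be ruled out. Since neighboring Whitney cubes of $\Omega$ have comparable side lengths, any such abutting cube has $\ell(Q_{j'}(z')) \sim \ell(Q_j(z))$, so the resulting portion of $\partial\Omega'$ near $x$ stays at distance $\sim \ell(Q_j(z))$, leaving $\delta_{\Omega'}(x) \sim \delta_{\lambda Q_j(z)}(x)$ once $\varepsilon$ is chosen small depending on $\lambda$. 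This geometric point is really the only new content; the remainder is the standard logarithmic bookkeeping for Whitney-type Harnack chains already used in the previous section, combined with the reference-point property of the $x_i$ for $B$ in $\Omega$ itself.
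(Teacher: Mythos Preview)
Your argument is essentially identical to the paper's: locate $x\in\lambda Q$ for some cube $Q=Q_j(z)$ in the union, follow the chain prefix back to a reference point with length $\lesssim\log(r_B/\ell(Q))+1$, then split on whether $\delta_{\Omega'}(x)\geq\varepsilon\ell(Q)$, invoking the chord-arc property of $\lambda Q$ together with $\delta_{\Omega'}(x)\sim\delta_{\lambda Q}(x)$ in the small case. One minor correction to your last paragraph: since $\lambda Q\subseteq\Omega'$, adjoining other dilated cubes can only push $\partial\Omega'$ \emph{farther} from $x$, so $\delta_{\Omega'}(x)\geq\delta_{\lambda Q}(x)$ holds trivially and the genuine concern is the reverse inequality (that $\delta_{\Omega'}(x)$ might be much larger, making the $\lambda Q$-chain too long); the paper in fact simply asserts this comparability without justification, so you are not missing any argument the paper actually supplies.
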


\begin{proof}
Let $x\in \Omega'\cap 2B$. Then $x\in \lambda Q_{i}(y)\cap 2B$ for some $y\in \Omega\backslash B$, where $ Q_{1}$ contains some reference point $x_{j}$. Let $Q=Q_{i}(y)$. Then there is a Harnack chain from $x_{Q}$ to $x_i$ of length $
\lec \log \frac{r_{B}}{\ell(Q)}+1$ (just by following the chain of $Q_{k}$ back up to $x_{j}$).
%
If $\delta_{\Omega'}(x)\geq  \ve\ell(Q)$ for some $\ve>0$, then it is easy to find a Harnack chain of bounded length from $x$ to $x_{Q}$ (of length depending on $\ve$). If $\delta_{\Omega'}(x)< \ve \ell(Q)$, then for $\ve>0$ small enough, $\delta_{\Omega'}(x)\sim \delta_{\lambda Q}(x)$ and there is a Harnack chain in $\lambda Q$ from $x$ to $x_{Q}$ of length at most a constant times
\[
\log\frac{\ell(Q)}{\delta_{\lambda Q}(x)}+1
\lec \log\frac{\ell(Q)}{\delta_{\Omega'}(x)}+1.\]
Connecting these chains give a Harnack chain from $x$ to $x_{i}$ of length at most $C\log\frac{r_B}{\delta_{\Omega'}(x)}+1$, and this finishes the proof. 
\end{proof}

\begin{proof}[Proof of Lemma \ref{l:mainhmlemma2}]
Let $x\in \Omega\backslash  4B$. By Remark \ref{r:ref}, since $4r_{B}<\diam \d\Omega$, we can assume that our reference points are outside $2B$. Let $y_{i}\in \d \Omega'$ be a corkscrew point in $\Omega$ accessible from $x_{i}$, so that $\delta_{\Omega}(y_{i})\sim r_{B}$ and there is a Harnack chain of bounded length from $y_{i}$ to $x_{i}$ (we can find these since semi-uniformity implies the existence of a Harnack chain from $y_{i}$ to the center of $B$, and so one of these balls must cross $\d\Omega'$). See Figure \ref{f:inball}. 

\begin{figure}[!ht]
\includegraphics[width=350pt]{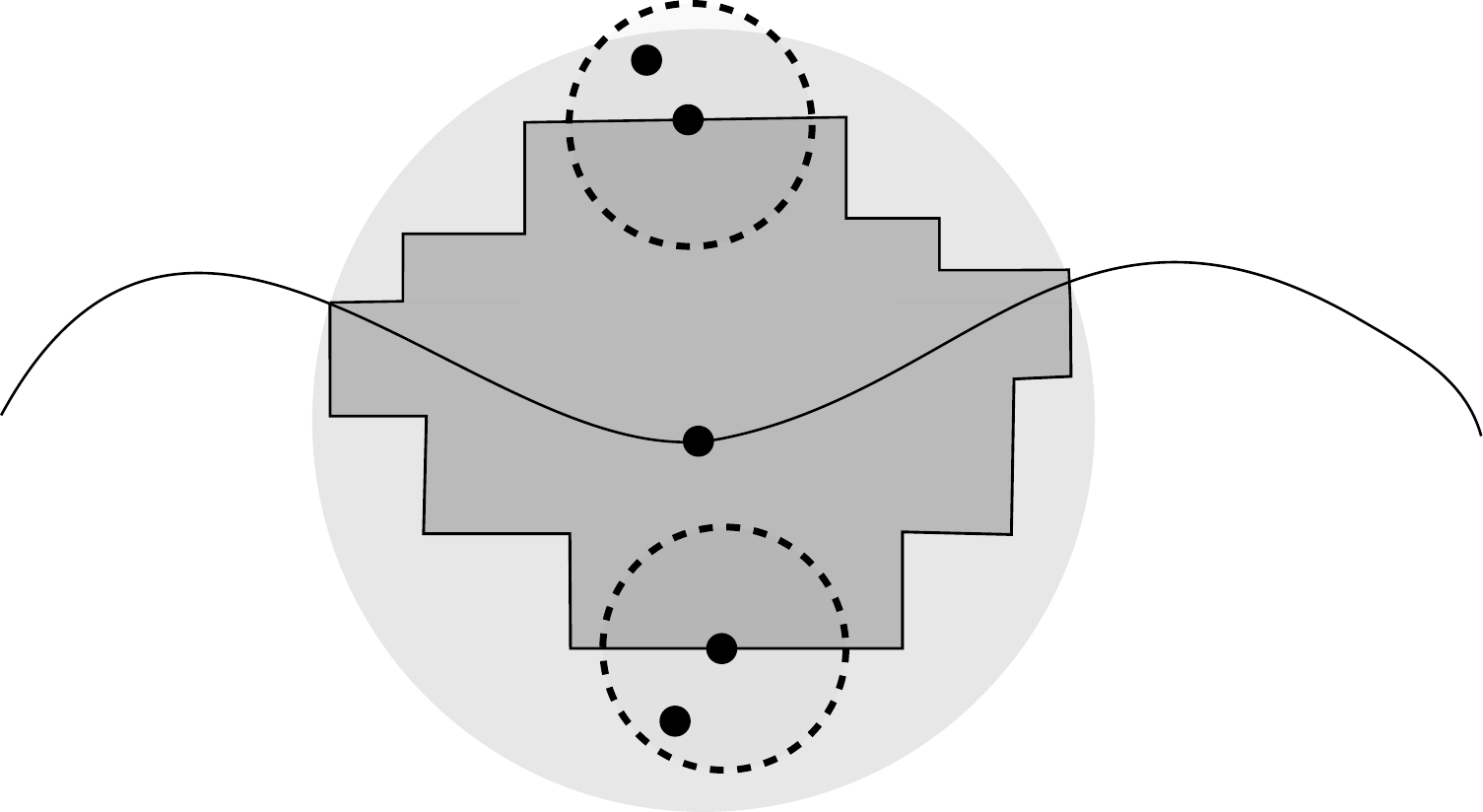}
\begin{picture}(0,0)(350,0)
\put(150,110){$(\Omega')^{c}$}
\put(150,150){$y_{i}$}
\put(100,50){$B$}
\put(300,100){$\d\Omega$}
\put(160,20){$z_{j}$}
\put(155,175){$z_{i}$}
\put(160,47){$y_{j}$}
\end{picture}
\caption{Observe that $\d\Omega\cap B\subseteq (\Omega')^{c}$, so by the maximum principle, $\omega_{\Omega'}^{x}(B)\geq \omega_{\Omega}^{x}(B)$ for $x\in \Omega'$.}
\label{f:inball}
\end{figure}

In particular, we may find a Harnack chain from $x_{i}$ to a corkscrew point $z_{i}\in B(y_{i},\frac{\delta_{\Omega}(y_{i})}{10})\cap \Omega'$. 

By the previous lemma, $x_{1},...,x_{n}$ are all reference points in $\Omega'$ for $2B$. By Lemma \ref{l:AHlemma} (using the fact that $x\not\in 2(2B)$) and using Harnack chains from $x_{i}$ to $z_{i}$ and the fact that $r_{B}\sim \delta_{\Omega}(y_{i})$,
\begin{align*}
\omega_{\Omega_{B}}^{x}({2B})
& \lec r_{B}^{d-1} \sum_{i=1}^{n} G_{\Omega'}(x,x_{i})
\sim  r_{B}^{d-1} \sum_{i=1}^{n} G_{\Omega'}(x,z_{i})\\
& \stackrel{\eqref{w>G}}{\lec} \sum_{i=1}^{n} \omega_{\Omega'}^{x}(B(y_{i},\delta_{\Omega}(y_{i})/2)).
\end{align*}
In particular, there is $j$ so that 
\begin{equation}
\label{e:w>w}
 \omega_{\Omega'}^{x}(B(y_{j},\delta_{\Omega}(y_{j})/2))\gec \omega_{\Omega'}^{x}(2B).
 \end{equation}
 Using Harnack chains, we know $\omega_{\Omega}^{y}(E)\sim \omega_{\Omega}^{y_{i}}(E)$ for all $y\in B(y_{i}, \delta_{\Omega}(y_{i})/2)$. Also, since $\d\Omega \cap B\subseteq (\Omega')^{c}$, we know $\omega_{\Omega}(B)$ vanishes continuously along $\d\Omega'\backslash \cnj{B}$ and is at most $1$ on $\cnj{B}\cap \d\Omega'$. Thus, by the maximum principle,
 \begin{equation}
 \label{e:w<wb}
\omega_{\Omega}^{z}(B)\leq \omega_{\Omega'}^{z}(\cnj{B}) \leq \omega_{\Omega'}^{z}(2{B})  \mbox{ for all }z\in \Omega'.
\end{equation}

Hence, by the strong Markov Property of harmonic measure and the maximum principle
\begin{align*}
\omega_{\Omega}^{x}(E)
&  =\int_{\d \Omega'\cap \Omega} \omega_{\Omega}^{y}(E)d\omega_{\Omega'}^{x}(y)
\geq \int_{B(y_{j},\delta_{\Omega}(y_{j})/2)} \omega_{\Omega}^{y}(E)d\omega_{\Omega'}^{x}(y)\\
& \gec  \int_{B(y_{j},\delta_{\Omega}(y_{j})/2)} \omega_{\Omega}^{y_{j}}(E)d\omega_{\Omega'}^{x}(y)
=\omega_{\Omega}^{y_{j}}(E)\omega_{\Omega}^{x}(B(y_{j},\delta_{\Omega}(y_{j})/2))\\
& \stackrel{\eqref{e:w>w}}{\gec} \omega_{\Omega}^{y_{j}}(E)\omega_{\Omega'}^{x}({2B})
\stackrel{\eqref{e:w<wb}}{\geq}  \omega_{\Omega}^{y_{j}}(E)\omega_{\Omega}^{x}(B)
\sim  \omega_{\Omega}^{x_{j}}(E)\omega_{\Omega}^{x}(B).
\end{align*}

\end{proof}

\begin{proof}[Proof of Theorem \ref{thmiii}]
The first inequality in Theorem \ref{thmiii} follows from Lemma \ref{l:mainhmlemma} by choosing some reference points $y_{1},...,y_{n}$ that are also corkscrew points for $B$ (recall Remark \ref{r:ref}). The second inequality follows from  \ref{l:mainhmlemma2} when $r_{B}<\frac{1}{4}\diam \d\Omega$. If $\diam \d\Omega>r_{B}\geq \diam \d\Omega$, things are a little easier: by Remark \ref{r:ref}, and assuming $0\in \d\Omega$ and $\diam \d\Omega=1$, we can find Reference points $y_{i}$ for $2\bB$ that are contained inside $\bB$. In particular, every $x\in \d 2 B$ can be connected to a reference point $y_{i}$ by a short Harnack chain. Also, for each such $x$, $\omega_{\Omega}^{x}(B)\sim \omega_{\Omega}^{x}(2\bB)\gec 1$ by \eqref{e:bourgain} and the doubling condition. These two observations imply
\[
\omega_{\Omega}^{x}(E)
\gec  \min_{i} \omega_{\Omega}^{y_{i}}(E) 
\gec \omega_{\Omega}^{x}(B) \min_{i}\omega_{\Omega}^{y_{i}}(E) \mbox{ for $x\in \d 2\bB$}
\]
Thus, the second inequality Theorem \ref{thmiii} holds in this case as well by the maximum principle.
\end{proof}

\section{Chord-arc subdomains of semi-uniform domains with UR boundary }

We recall the construction of cubes on a metric space, originally due to David and Christ (\cite{Dav88}, \cite{Christ-T(b)}), but the current formulation is from Hyt\"onen and Martikainen \cite{HM12}. This construction works for any doubling metric space $X$, but we state it for the case $X=\d\Omega$, where $\d\Omega$ is the boundary of some domain. 

{
 \begin{definition}
We say that a set $X$ is {\it $\delta$-separated} or a {$\delta$-net} if for all $x,y\in X$ we have $|x-y|\geq \delta$. 
\end{definition}
}

 \begin{theorem}
Let $X$ be a metric space and let $X_{k}$ be a nested sequence of maximal $\rho^{k}$-nets for $X$ where $\rho<1/1000$ and let $c_{0}=1/500$. For each $n\in\bZ$ there is a collection $\cD_{k}$ of ``cubes,'' which are Borel subsets of $E$ such that the following hold.
\begin{enumerate}
\item For every integer $k$, $X=\bigcup_{Q\in \cD_{k}}Q$.
\item If $Q,Q'\in \cD=\bigcup \cD_{k}$ and $Q\cap Q'\neq\emptyset$, then $Q\subseteq Q'$ or $Q'\subseteq Q$.
\item For $Q\in \cD$, let $k(Q)$ be the unique integer so that $Q\in \cD_{k}$ and set $\ell(Q)=5\rho^{k(Q)}$. Then there is $\zeta_{Q}\in X_{k}$ so that
\begin{equation}\label{e:containment}
B(\zeta_{Q},c_{0}\ell(Q) )\subseteq Q\subseteq B(\zeta_{Q},\ell(Q))=:B_{Q}
\end{equation}
and
\[ X_{k}=\{\zeta_{Q}: Q\in \cD_{k}\}.\]
\end{enumerate}
\item If $\mu$ is a doubling measure on $E$, $\tau\in (0,1)$, $Q\in \cD_{k}$ for some $k$, and 
\[
(1-\tau)Q:=\{x\in Q: \dist(x,E\backslash Q)\geq \tau \ell(Q)\},
\]
then $\mu(Q\backslash (1-\tau)Q)\lec \tau^{\alpha}$ for $\alpha$ and implied constant depending on the doubling constant for $\mu$. 
\label{t:Christ}
\end{theorem}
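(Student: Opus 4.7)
The plan is to carry out the Hyt\"onen-Martikainen refinement of the David-Christ dyadic cube construction. First, arrange the maximal nets $X_k$ so that $X_k \subseteq X_{k+1}$ for all $k \in \bZ$; this is possible by Zorn's lemma together with the observation that any maximal $\rho^k$-net extends to a maximal $\rho^{k+1}$-net when $\rho<1$. Fix a total ordering on $\bigcup_k X_k$, and for each $\zeta \in X_{k+1}$ select a \emph{parent} $a(\zeta) \in X_k$ as the $\leq$-least point of $X_k$ minimizing $|\zeta - \cdot|$; in particular $a(\zeta)=\zeta$ whenever $\zeta \in X_k$. This endows $\bigcup_k X_k$ with a tree structure whose level-$k$ nodes are precisely $X_k$.

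For $\zeta \in X_k$ define the descendant set $D_\zeta := \bigcup_{\ell \geq k} \{\eta \in X_\ell : a^{\ell-k}(\eta) = \zeta\}$, and set $Q_\zeta := \cnj{D_\zeta}$, assigning shared boundary points to exactly one cube via the fixed ordering so that $\{Q_\zeta\}_{\zeta \in X_k}$ is a Borel partition of $X$. Property (1) is then immediate, and (2) follows from transitivity of the descendant relation. For the outer containment in (3), the bound $|a(\eta) - \eta|\leq \rho^{\ell-1}$ for $\eta\in X_\ell$ (by maximality of $X_{\ell-1}$) telescopes to
\[
|\eta - \zeta_Q| \leq \sum_{j=k}^{\ell-1} \rho^{j} \leq \frac{\rho^{k}}{1-\rho},
\]
which is much smaller than $\ell(Q)=5\rho^k$ when $\rho < 1/1000$, yielding $Q_{\zeta_Q} \subseteq \cnj{B}(\zeta_Q,\ell(Q))$. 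For the inner containment, suppose $|x - \zeta_Q| < c_0\ell(Q) = \rho^k/100$ and take $\eta \in X_\ell$ with $|\eta-x|<\rho^\ell$ (possible by maximality). The level-$(k{+}1)$ ancestor $\eta_{k+1}$ then lies within $\rho^{k+1}/(1-\rho) + \rho^\ell + \rho^k/100$ of $\zeta_Q$, which for $\ell$ large and $\rho<1/1000$ is strictly less than half the $\rho^k$-separation of $X_k$; hence the parent rule forces the level-$k$ ancestor to be $\zeta_Q$, so $\eta\in D_{\zeta_Q}$, and letting $\ell\to\infty$ gives $x\in Q_{\zeta_Q}$.

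For the thin-boundary property (4), fix $Q \in \cD_k$ and $\tau \in (0,1)$, and choose $j$ with $\rho^{k+j} \sim \tau\ell(Q)$. The boundary strip $Q \setminus (1-\tau)Q$ is covered by those subcubes of $\cD_{k+j}$ whose centers lie within $\tau\ell(Q)+\rho^{k+j}$ of $X\setminus Q$. The Hyt\"onen-Martikainen strategy is an inductive \emph{good/bad} scheme: at each intermediate scale $k+i$, call a subcube \emph{good} if its defining ball sits at distance at least, say, $\rho^{k+i}/2$ from the boundary of its parent; doubling of $\mu$ guarantees that good subcubes carry at least a fixed fraction $\eta>0$ of their parent's mass, uniformly in scale. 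Since only descendants of cubes that are \emph{bad} at every intermediate scale contribute to the strip, iterating $j \sim \log_{1/\rho}(1/\tau)$ times produces
\[
\mu(Q \setminus (1-\tau)Q) \lec (1-\eta)^{j} \mu(Q) \lec \tau^{\alpha}\mu(Q),
\]
with $\alpha = -\log(1-\eta)/\log(1/\rho) > 0$.

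The main obstacle is (4). Properties (1)-(3) reduce to bookkeeping of the tree and a convergent geometric series, but the thin-boundary estimate requires propagating smallness across $\log_{1/\rho}(1/\tau)$ scales using doubling of $\mu$. The cleanest execution, due to Hyt\"onen and Martikainen, randomizes the parent selection, bounds the \emph{expected} measure of the boundary layer uniformly in $Q$, and then extracts a good deterministic realization of the ordering.
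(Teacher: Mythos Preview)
The paper does not prove this theorem at all: it is quoted as a known result, with the construction attributed to David \cite{Dav88} and Christ \cite{Christ-T(b)} and the present formulation to Hyt\"onen--Martikainen \cite{HM12}. So there is no ``paper's own proof'' to compare against; you are filling in a proof the author deliberately omitted.

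Your sketch is essentially correct and follows the standard route. Properties (1)--(3) come out exactly as you say: the tree structure from the parent map, the outer containment from telescoping $|a(\eta)-\eta|\le\rho^{\ell-1}$, and the inner containment from the separation of $X_k$ forcing $\zeta_Q$ to be the unique nearest level-$k$ point to anything in $B(\zeta_Q,c_0\ell(Q))$. For (4), your good/bad scheme is sound once one observes that the nesting $X_k\subseteq X_{k+1}$ guarantees every parent $P$ has a child centered at $\zeta_P$, and the inner-ball property $B(\zeta_P,c_0\ell(P))\subseteq P$ forces that child to be good; doubling then gives it mass $\ge\eta\,\mu(P)$, so the bad children carry at most $(1-\eta)\mu(P)$. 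Since ``good at some intermediate level $k+i\le k+j$'' implies distance $\gtrsim\rho^{k+i}\ge\rho^{k+j}\sim\tau\ell(Q)$ from $X\setminus Q$, only cubes with every ancestor bad can meet the strip, and iterating gives $(1-\eta)^j\sim\tau^\alpha$.

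One remark: your final paragraph suggesting that randomization is the ``cleanest execution'' of (4) is misleading here. The randomization in \cite{HM12} serves a different purpose (producing cubes adapted to a \emph{non-doubling} measure, or getting small boundaries with respect to a fixed measure in the absence of doubling). When $\mu$ is doubling, the deterministic argument you already gave --- exploiting that the center child is always good --- is complete and no probabilistic selection is needed. You should either drop that paragraph or clarify that it addresses a more general setting than what the theorem asserts.
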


We next recall a theorem from \cite{HMM14} (which is a bilateral version of the Coronization theorem of David and Semmes \cite{DS}).

\begin{lemma}
\label{l:HMM}
Let $\Omega\subseteq \R^{d+1}$ be a domain with UR boundary, $K\gg 1,0<\eta\ll 1$, and let $\sigma=\cH^{d}|_{\d\Omega}$. Let $\cD$ denote the cubes from Theorem \ref{t:Christ} for $E=\d\Omega$. Then we may partition $\cD=\cG\cup \cB$ such that the following hold:
\begin{enumerate}
\item The cubes in $\cB$ satisfy a Carleson packing condition:
\[
\sum_{Q\subseteq R\atop Q\in \cB} \sigma(Q) \lec \sigma(R) \;\; \mbox{ for all }R\in \cD.\]
\item $\cG=\bigcup_{S\in \cF} S$ where each $S\in \cF$ is a {\it stopping-time region}, meaning 
\begin{enumerate}[(a)]
\item $S$ contains a unique maximal element $Q(S)$ so that $Q\subseteq Q(S)$ for all $Q\in S$.
\item If $Q\in S$ and if $Q\subseteq R\subseteq Q(S)$, then $R\in S$,
\item If $Q\in S$, either all of its children are in $S$ or none of them
are. This last property ensures that every $x\in Q(S)$ is either contained in infinitely many cubes from $S$ or it is contained in a minimal cube that does not properly contains other cubes from $S$. We denote the set of minimal cubes $m(S)$. 
\end{enumerate}
\item The cubes $\{Q(S):S\in \cF\}$ have a Carleson packing condition:
\[
\sum_{Q(S)\subseteq R\atop S\in \cF} \sigma(Q(S)) \lec \sigma(R) \;\; \mbox{ for all }R\in \cD.
\]
\item For each $S\in \cF$, there is a $d$-dimensional $\eta$-Lipschitz graph $\Gamma_{S}$ so that, for all $Q\in S$, 
\begin{equation}
\label{e:closegraph}
\sup_{x\in K^2B_{Q}\cap E}\dist(x,\Gamma_{S})+\sup_{X\in K^2B_{Q}\cap \Gamma_{S}}\dist(x,E)<\eta \ell(Q).
\end{equation}
\end{enumerate}
\end{lemma}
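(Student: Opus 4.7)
My plan is to prove this bilateral coronization via a stopping-time argument built on the \emph{bilateral weak geometric lemma} for uniformly rectifiable sets. Recall that UR is characterized (by David--Semmes) by a unilateral weak geometric lemma for $\beta$-numbers, and this can be upgraded to a bilateral version: for each $\eta,K$ there is a Carleson measure bound on the set of cubes $Q$ for which there is \emph{no} $d$-plane $P_Q$ with
\[
\sup_{x\in KB_Q\cap \d\Omega}\dist(x,P_Q)+\sup_{x\in KB_Q\cap P_Q}\dist(x,\d\Omega)<\eta\ell(Q).
\]
Call such a cube $Q$ \emph{bilaterally $(\eta,K)$-good (BWGL-good)} and let $\cB_0$ be the set of bad cubes; by the bilateral WGL, $\cB_0$ satisfies a Carleson packing bound. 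To each BWGL-good cube $Q$, associate such an approximating plane $P_Q$, and write $\vec n_Q$ for its unit normal.

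Next I would define the stopping-time regions. Fix thresholds $\eta\ll \eta_1\ll 1$ and $K_1\gg K$. Starting from the top cube $R\in \cD$, grow a region $S(R)$ downward: include $R$ in $S(R)$, then include a child $Q'$ of a cube $Q\in S(R)$ if $Q'$ is BWGL-good and $|\vec n_{Q'}-\vec n_R|<\eta_1$ and $\dist(z_{Q'},P_R)<\eta_1 \ell(Q')$ (so the approximating planes have barely tilted from the one at the top). When a cube fails one of these conditions, stop; the cube's parent is in $S(R)$, and the cube itself becomes the top of a new stopping region, to be treated the same way. Any cube in $\cB_0$ is placed in $\cB$ directly. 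The stopping cubes $Q(S)$ for which the failure was caused by significant tilting of $\vec n$ relative to $\vec n_R$ satisfy a Carleson packing bound because the function $Q\mapsto |\vec n_Q-\vec n_{Q^{(1)}}|$ (comparing to the parent) satisfies a Carleson measure estimate: this is another standard consequence of UR, coming from the $L^2$-control on $\beta$-numbers and the corresponding quadratic control on plane oscillations. Combined with the Carleson bound on $\cB_0$, this yields the packing bound on $\{Q(S)\}$.

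Within a stopping-time region $S$ with top $R=Q(S)$, all BWGL planes $P_Q$ for $Q\in S$ are graphs over a fixed $d$-plane $P_R$ with slope $\lesssim \eta_1$. To build $\Gamma_S$, I would project the points $\{z_Q:Q\in S\}\cup\{z_Q:Q\in m(S)\}$ onto $P_R$, obtain their heights over $P_R$, and extend this height function to a Lipschitz function $f_S:P_R\to P_R^\perp$ defined on all of $P_R$ using a Whitney-type extension (McShane/Whitney with modulus at most $C\eta_1$); a refinement, as in Section 2 of David--Semmes or in \cite{HMM14}, is needed so that the graph behaves well off of $\pi_{P_R}(\bigcup_{Q\in S} B_Q)$, where one interpolates using $P_R$ itself (the graph of $0$). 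For $\eta_1$ chosen suitably small in terms of $\eta$ this produces an $\eta$-Lipschitz graph $\Gamma_S$.

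The main technical obstacle will be verifying the bilateral closeness estimate \eqref{e:closegraph} at every scale of $Q\in S$ simultaneously. One direction, $\sup_{x\in K^2B_Q\cap \d\Omega}\dist(x,\Gamma_S)<\eta\ell(Q)$, uses the BWGL condition at $Q$ (forcing $\d\Omega\cap K^2 B_Q$ near $P_Q$), combined with the bound $\dist(P_Q,\Gamma_S)\lesssim \eta_1\ell(Q)$ valid throughout $K^2 B_Q$ (from the controlled tilting and the way $f_S$ is defined over the projection of $B_Q$). The converse direction, $\sup_{x\in K^2B_Q\cap \Gamma_S}\dist(x,\d\Omega)<\eta\ell(Q)$, requires checking that the values we interpolated — in particular where $\Gamma_S$ ``coasts" over regions of $\d\Omega$ that have dropped out of $S$ via descendant stopping cubes or minimal cubes — still lie close to $\d\Omega$. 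This is where the argument is delicate: if a descendant $Q'$ stopped due to tilting rather than bilateral failure, $\d\Omega$ near $Q'$ is still close to $P_{Q'}$, hence close to $P_R$ (tilt at most $\eta_1$), hence close to $\Gamma_S$; if $Q'\in\cB_0$, one absorbs that contribution into the packing estimate using the minimality of $\cB$. Choosing $\eta\ll\eta_1\ll 1$ and $K_1\gg K$ carefully makes both bounds hold, completing the decomposition.
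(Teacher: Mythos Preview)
The paper does not prove this lemma at all: it is simply quoted from \cite{HMM14} (as a bilateral version of the David--Semmes coronization), so there is no ``paper's own proof'' to compare against. Your sketch is essentially the correct strategy and matches the approach taken in \cite{HMM14}: start from the BWGL characterization of UR (stated in the paper as Theorem~\ref{theor:UR-2}), run a stopping-time argument on BWGL-good cubes with a tilt condition on the approximating planes, build $\Gamma_S$ as a Lipschitz graph over $P_{Q(S)}$ by Whitney extension of the centers' heights, and verify \eqref{e:closegraph}.

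One point where your outline is a bit loose is the Carleson packing for the top cubes $\{Q(S)\}$. You attribute it to a Carleson bound on $Q\mapsto |\vec n_Q-\vec n_{Q^{(1)}}|$ coming from ``$L^2$-control on $\beta$-numbers''. In the actual argument one does not need such a square-function estimate on normals: the stopping cubes that arise from tilting are handled by a pigeonhole/John--Nirenberg-type mechanism (a long chain of tilt-stopping cubes forces large cumulative angle, which in turn forces many BWGL-bad cubes along the way, and those are already Carleson-packed). Also, in your last paragraph the phrase ``one absorbs that contribution into the packing estimate using the minimality of $\cB$'' is not how the bilateral closeness over stopped children in $\cB_0$ is obtained; rather, one uses that the \emph{parent} of such a cube is still BWGL-good at a comparable scale, so $\partial\Omega$ is bilaterally close to $P_{\text{parent}}$ (hence to $\Gamma_S$) on $K^2B_{Q'}$. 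With those two adjustments your plan goes through and reproduces \cite{HMM14}.
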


The main objective of this section is to prove the following lemma. 

\begin{lemma}
\label{l:CAD}
Let $\Omega\subseteq \R^{d+1}$ be semi-uniform with UR boundary and $\cD=\cG\cup \cB$ the decomposition as in Lemma \ref{l:HMM}. For each $\delta>0$, $Q_{0}\in \{Q(S):S\in \cF\}$, and $x_{0}$ a corkscrew point for $B_{Q_{0}}$, there is a CAD $\Omega_{0}\subseteq MB_{0}$ (with constants depending on $\delta$ and the UR constants of $\d\Omega$) so that 
\[\sigma(Q_{0}\backslash \d\Omega_{0})<\delta \sigma(Q_0)\]
and
\[
\delta_{\Omega_{0}}(x_{0})\sim \diam \Omega_{0}\sim \ell(Q_{0}). 
\]
\end{lemma}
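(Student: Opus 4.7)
The plan is to build $\Omega_0$ as a sawtooth-type region anchored to the stopping-time region $S$ with $Q(S)=Q_0$, using the Lipschitz graph $\Gamma_S$ supplied by Lemma~\ref{l:HMM} as a scaffolding. I would start by fixing parameters $K$ large and $\eta$ small, both to be chosen at the end in terms of $\delta$, the UR constants, and the semi-uniformity constants of $\Omega$. After rotating, assume $\Gamma_S=\{(y,A(y)):y\in\R^d\}$ for an $\eta$-Lipschitz function $A$, with $\zeta_{Q_0}$ near the origin; since $x_0$ is a corkscrew for $B_{Q_0}$ and $\d\Omega$ is $\eta\ell(Q_0)$-close to $\Gamma_S$ in $K^2B_{Q_0}$, for $\eta$ small $x_0$ lies on one side of $\Gamma_S$ at distance comparable to $\ell(Q_0)$; WLOG $x_0\in\{t>A(y)\}$.

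Next I would associate to each $Q\in S$ with $\ell(Q)$ not too small a Whitney region $W_Q^+$ consisting of the Whitney cubes $R$ of $\Omega$ with $\ell(R)\sim\ell(Q)$ whose centers lie ``above'' the graph, say in $\{A(y)+c\ell(Q)<t<A(y)+C\ell(Q)\}\cap KB_Q$. By~\eqref{e:closegraph} and the $\eta$-closeness, the cubes in $W_Q^+$ all lie inside $\Omega$, and by standard Whitney arguments the union
\[
\Omega_0 \;:=\; \mathrm{Int}\Bigl(\,\overline{\bigcup_{Q\in S,\ \ell(Q)\geq \tau\ell(Q_0)} W_Q^+}\,\Bigr)
\]
(for a small truncation parameter $\tau=\tau(\delta)$ to be chosen) is a connected open subset of $\Omega$ containing $x_0$ with $\delta_{\Omega_0}(x_0)\sim\ell(Q_0)$, contained in $MB_{Q_0}$ for $M=M(K)$.

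Then I would verify that $\Omega_0$ is CAD. Exterior corkscrews are essentially free: the other side $\{t<A(y)\}$ provides a large ball disjoint from $\overline{\Omega_0}$ near every boundary point of $\Omega_0$, because $\d\Omega_0$ is trapped in an $\eta$-neighborhood of $\Gamma_S$ at the relevant scales. Interior corkscrews and Harnack chains for $\Omega_0$ follow from the uniform Whitney structure, mirroring the arguments for sawtooths in~\cite{HM14,HMU14,BH17}; the fact that $\Omega$ is only semi-uniform (not uniform) is harmless here because inside a single stopping time region bordered by a Lipschitz graph $\Omega$ locally looks uniform. The ADR of $\d\Omega_0$ splits into the piece on $\d\Omega$ (ADR by assumption) and the ``internal walls'' glued in near cubes of $m(S)$ and near the truncation scale $\tau\ell(Q_0)$; both are locally bi-Lipschitz images of pieces of $\Gamma_S$ together with graph caps, so ADR transfers.

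The main obstacle, and the point that forces the careful choice of $\tau$ and $\eta$, will be the measure bound $\sigma(Q_0\setminus\d\Omega_0)<\delta\sigma(Q_0)$. Points of $Q_0$ missing from $\d\Omega_0$ fall into three types: (i) points lying in cubes $Q\subseteq Q_0$ with $\ell(Q)<\tau\ell(Q_0)$ that belong to $\cB$ or to descendant stopping time regions, i.e.\ $Q\subseteq Q(S')$ for some $S'\neq S$ with $\ell(Q(S'))<\tau\ell(Q_0)$; (ii) points inside minimal cubes $Q\in m(S)$ whose local $\d\Omega$ folds below $\Gamma_S$; (iii) points on the outer ``collar'' near the truncation. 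The Carleson packing in Lemma~\ref{l:HMM}(1) and (3) together with ADR controls the total $\sigma$-mass in (i) by an expression going to $0$ with $\tau$ and with the Carleson packing constants, while (ii) is controlled using $\eta$-closeness: for $Q\in m(S)$ with $\ell(Q)\geq\tau\ell(Q_0)$, all but an $\eta$-fraction of $\sigma(Q)$ sees $\Omega_0$ on one side, so contributes at most $O(\eta\sigma(Q_0))$ after summing via ADR. Type (iii) is a bounded number of Lipschitz pieces of small measure. Choosing first $\eta$ small and then $\tau$ small as functions of $\delta$ gives $\sigma(Q_0\setminus\d\Omega_0)<\delta\sigma(Q_0)$ and closes the argument. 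The delicate bookkeeping here, making the sawtooth both connected and a CAD while cutting off cleanly across $m(S)$ so that the hidden piece of $Q_0$ is genuinely small, is where I expect most of the work to lie.
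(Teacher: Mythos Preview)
There is a genuine gap in your measure estimate, and it forces a different architecture for $\Omega_0$ than the one you propose.

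You build $\Omega_0$ as a sawtooth over the \emph{single} stopping-time region $S$ with $Q(S)=Q_0$, and then claim that the mass of $Q_0$ not captured by $\d\Omega_0$ is small. But the only points of $\d\Omega$ that your sawtooth can touch are those $\xi\in Q_0$ with $d_S(\xi)=0$, i.e.\ points lying in infinitely many cubes of $S$. The complement of this set in $Q_0$ is exactly $\bigcup_{Q\in m(S)}Q$, and nothing in Lemma~\ref{l:HMM} prevents this union from having full measure in $Q_0$: a stopping-time region may stop immediately almost everywhere. Your claimed control on ``type (i)'' points via the Carleson packing of $\cB$ and $\{Q(S')\}$ is incorrect---those packing conditions bound the \emph{sum} $\sum_{Q(S')\subseteq Q_0}\sigma(Q(S'))$ by $C\sigma(Q_0)$, not by something tending to $0$ with $\tau$, and they say nothing about the measure of the union of minimal cubes of $S$. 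Likewise your ``type (ii)'' argument cannot work: at a minimal cube $Q\in m(S)$, your domain caps off at height $\sim\ell(Q)$ above $\Gamma_S$ and simply does not meet $\d\Omega$ inside $Q$ at all, so no $\eta$-fraction is recovered there. A side symptom of the gap is that your construction never uses semi-uniformity, which is a hypothesis of the lemma.

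The paper's proof repairs exactly this by iterating over \emph{many} stopping-time regions. One fixes $N=N(\delta)$ and, via Chebyshev against the packing conditions, shows that the set $E_0\subseteq Q_0$ of points contained in more than $N$ cubes from $\cB\cup\{Q(S')\}\cup\bigcup m(S')$ has $\sigma(E_0)\lesssim N^{-1}\sigma(Q_0)$. Off $E_0$, every point lies in infinitely many cubes of some stopping-time region $S'$ at depth $\le N$ below $S_0$. For each such $S'$ one builds a Lipschitz-graph domain $\Omega_{S'}^{\pm}$ as you do, and then---this is where semi-uniformity enters---connects the chosen $\Omega_{S'}$ to its parent $\Omega_{S''}$ by a tube of fattened Whitney cubes along a Harnack chain. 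The union of all these connected pieces is $\Omega_0$; its boundary meets $\d\Omega$ in (the closure of) a set $G$ with $\sigma(Q_0\setminus G)<\delta\sigma(Q_0)$, and the CAD verification (corkscrews, uniformity, ADR) is carried out for this multi-piece domain. The essential missing idea in your plan is this descent through $N$ generations of stopping-time regions together with the semi-uniform gluing; a single-region sawtooth cannot in general capture a $(1-\delta)$-fraction of $Q_0$.
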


%


The rest of this section is dedicated to the proof of \ref{l:CAD}. The arguments below take inspiration not just from \cite{BH17} and \cite{HM14}, but also from \cite[Chapter 16]{DS}. We understand that there are many constructions of chord-arc subdomains for uniform domains and NTA domains, and though we are working in semi-uniform domains, the details are similar and in some cases identical to steps in these other cases (see for example \cite{HM14}). However, to spare the reader the task of checking these references to adapt the steps themselves, we present all of them here so there is no ambiguity.\\

First we give a vague sketch of the proof. According to the corona decomposition, we can decompose the cubes in $Q_{0}$ into bad cubes $\cB$ and stopping time regions $S\in \cF$. For each $S\in \cF$, $\d\Omega$ is well approximated by a graph $\Gamma_{S}$ near cubes in $S$. Thus, we can easily construct two Lipschitz domains $\Omega_{S}^{+}$ and $\Omega_{S}^{-}$ above and below the graph whose boundaries are close to $\d\Omega$ near cubes in $S$.  In particular, if $x\in Q_{0}$ is contained in infinitely many $Q\in S$, then $x\in \d\Omega_{S}^{\pm}$. We then connect some of these into one CAD as follows: first, $Q_{0}=Q(S_{0})$ for some stopping-time region $S_{0}$, so we add either of $\Omega_{S_{0}}^{\pm}$ to our CAD (whichever contains the corkscrew point $x_{0}$, and one of them should since $\d\Omega$ is very flat inside $KB_{Q_{0}}$ because of \ref{e:closegraph}). For the stopping-time regions $S$ just below $S_{0}$, semi-uniformity implies we may connect at least one of the $\Omega_{S}^{\pm}$ (say it is $\Omega_{S}^{+}$) up to $\Omega_{S_{0}}$ by a Harnack chain, and we declare $\Omega_{S}$ to be $\Omega_{S}^{+}$ plus this chain. We continue so forth, adding CADs $\Omega_{S}$ for $N$ levels of stopping-time regions $S$ to construct one large CAD $\Omega_{0}$. We pick $N$ large enough so that, by the Carleson packing condition, most of $Q_{0}$ will be contained in at least one of the $\d\Omega_{S}$.

We have to be more careful than this, however, since we could add two CADs $\Omega_{S}$ and $\Omega_{S'}$ corresponding to stopping-time regions $S$ and $S'$ for which $Q(S)$ and $Q(S')$ are adjacent, and if $\d\Omega_{S}$ and $\d\Omega_{S'}$ contain all of $Q(S)$ and $Q(S')$, this could cause a pinch in the domain $\Omega_{0}$. To remedy this, we also remove a small neighborhood of the boundaries of the $Q(S)$ and remove cubes from our stopping-times that fall into these gaps. In this way, the points in $\d\Omega_{S}$ and $\d\Omega_{S'}$ that are close to $\d\Omega$ will be far enough away from each other. \\

We proceed with the proof. Let $\delta>0$. Fix $S_{0}\in \cF$ and set $Q_{0}=Q(S_{0})$. Let $N$ be large (we will choose it shortly). Let 
\[
\cT=\cB\cup\{Q(S):S\in \cF\}\cup\bigcup_{S\in \cF}m(S).
\]
and
\[
E_{0}=\{x\in Q_{0}:\sum_{Q\in \cT \atop Q\subseteq Q_{0}} \one_{Q}\geq N\}.
\]
Then
\[
\sigma(E_{0})
\leq \frac{1}{N}\int \sum_{Q\in \cT \atop Q\subseteq Q_{0}}\one_{Q}d\sigma
\leq \frac{1}{N}\sum_{Q\in \cT \atop Q\subseteq Q_{0}}\sigma(Q)
\lec \frac{\sigma(Q_{0})}{N}.
\]
We fix $N$ so that 
\[
\sigma(E_{0})<\frac{\delta}{2} \sigma(Q_{0}).
\]

Recall from Theorem \ref{t:Christ} that, for $\tau>0$,

\[
\sigma(Q\backslash (1-\tau)Q)\lec \tau^{\alpha}\sigma(Q). 
\]

%
%
%
 Let 
 
\[
E_{1}=\bigcup_{Q\in \cT \atop Q\subseteq Q_{0}}  Q\backslash (1-\tau)Q.
\]
Then 
\[
\sigma(E_{1})
\leq \sum_{S\in \cF\atop Q(S)\subseteq Q_{0}} \sigma(Q(S)\backslash (1-\tau))Q(S))
\lec \sum_{S\in \cF\atop Q(S)\subseteq Q_{0}}  \tau^{\alpha} \sigma(Q(S))
\lec \tau^{\alpha} \sigma(Q_{0}).
\]
So for $\tau$ small enough
\[
\sigma(E_{1})<\frac{\delta}{2}\sigma(Q_{0}).\]
Let 
\[
G=Q_{0}\backslash (E_{0}\cup E_{1}).
\]
Then
\begin{equation}
\label{e:outGsmall}
\sigma(Q_{0}\backslash G)<\delta \sigma(Q_{0}). 
\end{equation}

For $S\in \cF$ with $Q(S)\subseteq Q_{0}$, let $Q_{j}$ be the cubes that have a sibling not intersecting $G$ and set 
\[
\tilde{S}=\{Q\in S:Q\not\subseteq  Q_{j} \mbox{ for some }j\}
\]
and 
\[
\tilde{\cF}=\{\tilde{S}:S\in \cF,\;\; Q(S)\subseteq Q_{0}\}.
\]
\begin{remark}
\label{r:remark}
We make a few remarks:
\begin{enumerate}
\item Note that for $\delta$ small, there is $c(\delta)\in (0,1)$ so that any $Q\subseteq Q_{0}$ with $\ell(Q)\geq \ell(Q_{0})$ intersects $G$.
\item Each $S\in \tilde{\cF}$ are stopping-time regions. 
\item Each $x\in G$ is contained in infinitely many cubes from one $S\in \tilde{\cF}$, and is only contained in $N$ many $Q(S)$ with $S\in \tilde{\cF}$. 
\item Finally, 
\begin{multline}\label{e:taudist}
\mbox{if} \;\; Q, R\in \cT \;\;\mbox{and} \;\; Q\cap R=\emptyset , \mbox{ then }  \\
\dist(Q\cap G,R\cap G)>\tau\max\{\ell(Q),\ell(R)\}.
\end{multline}

\end{enumerate}
\end{remark}

\def\tS{\tilde{S}}
For $n\geq 0$, let $\cF_{0}=\{S_{0}\}$ (recall $S_{0}$ is the stopping time region with $Q(S_{0})=Q_{0}$) and $\cF_{n}$ denote those $\tS\in \tilde{\cF}$ that are properly contained in $n$ many cubes of the form $Q(\tilde{S}')\subseteq Q_{0}$ where $\tilde{S}'\in \cF$ and $Q(\tilde{S})\cap G\neq\emptyset$. %
So in particular, $\cF_{0}=\{\tS_{0}\}$ and $\cF_{n}$ are those stopping-times $\tS\in \tilde{\cF}$ that are properly contained in $n$ other $Q(\tS)\subseteq Q_{0}$ with $\tS\in \tilde{\cF}$. 

Let $S\in \tilde{\cF}$, and let $\hat{S}\in \cF$ be so that $\tilde{\hat{S}}=S$. For $x\in \R^{d+1}$, define
\[
d_{S}(x)=\inf_{Q\in S}(\dist(x,Q)+\ell(Q)).\] 

Note that $d_{S}$ is $1$-Lipschitz: indeed, if $x,y\in \R^{d+1}$ and $Q\in S$, then
\[
d_{S}(x)\leq \dist(x,Q)+\ell(Q)\leq |x-y|+\dist(y,Q)+\ell(Q),\]
and infimizing over all $Q\in S$ gives $d_{S}(x)\leq |x-y|+d_{S}(y)$, which proves the claim.

Let $\Gamma_{S}:= \Gamma_{\hat{S}}$, where $\Gamma_{\hat{S}}$ is from Lemma \ref{l:HMM}. Assume $\Gamma_{S}$ is a graph over a $d$-plane $P_{S}$ of a function $f_{S}$. Let $e_{S}$ be a normal unit vector to $P_{S}$, $\pi_{S}$ the projection into $P_{S}$, and for $x\in \R^{d+1}$, define
\[
F_{S}(x)=\pi_{S}(x)+ f_{S}(\pi_{S}(x))e_{S}.
\]
Then each $x\in \Gamma_{S}$ can be written as 
\[
x=F_{S}(x).\]

Set
\[
U_{S}^{+} = \{x+te_{S}\in KB_{Q(S)}:x\in P_{S},\;\;   t>f_{{S}}(x)+d_{S}(F_{S}(x))\}
\]
and
\[
U_{S}^{-} = \{x+te_{S}\in KB_{Q(S)}:x\in P_{S},\;\;   t<f_{{S}}(x)-d_{S}(F_{S}(x))\}
\]
Note that these two domains are Lipschitz domains. See Figure \ref{f:OmegaS1}.

\begin{figure}[!ht]
\includegraphics[width=200pt]{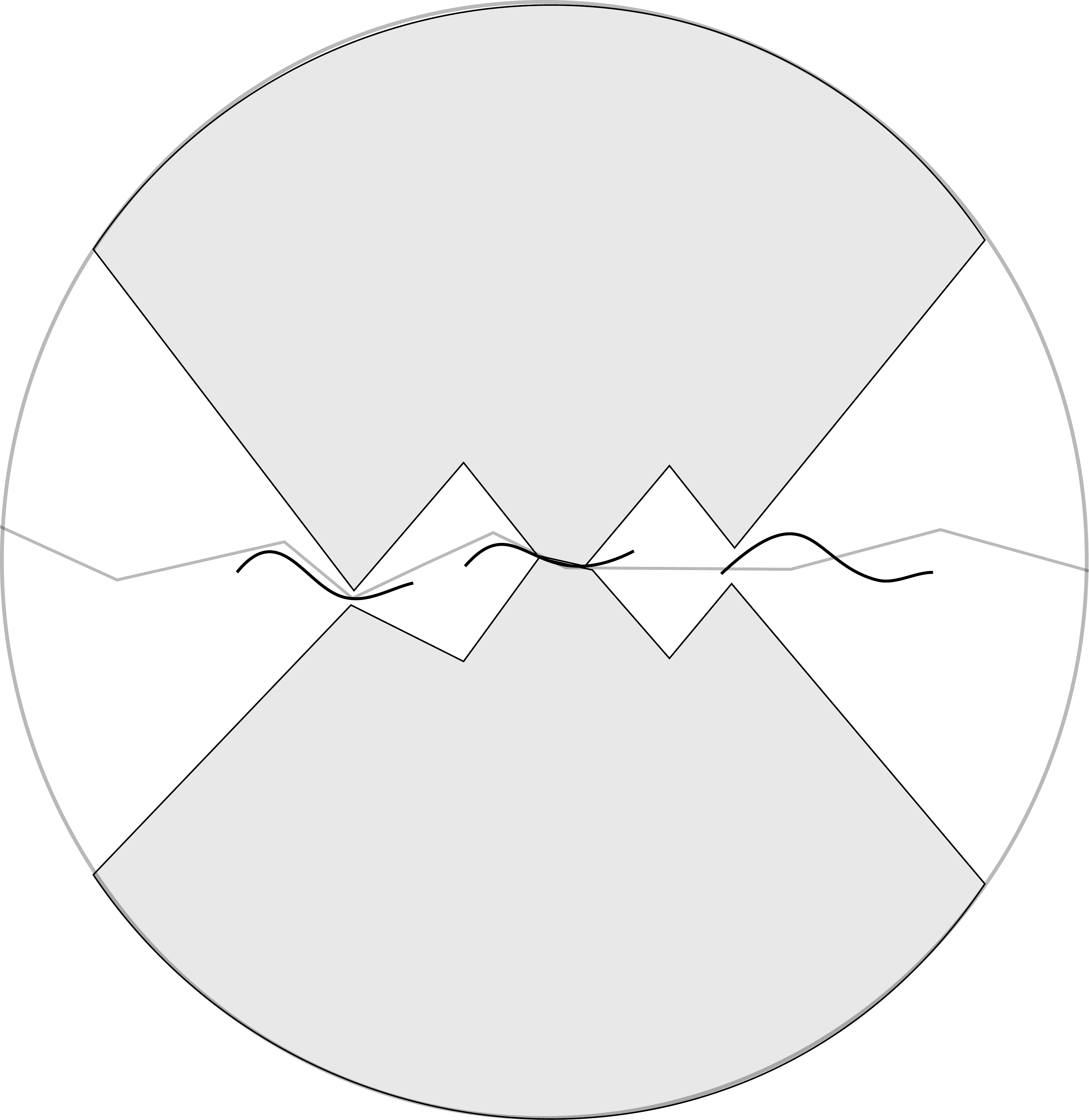}
\begin{picture}(0,0)(200,0)
\put(100,150){$U_{S}^{+}$}
\put(100,50){$U_{S}^{-}$}
\put(175,110){$\Gamma_{S}$}
\put(150,85){$\d\Omega$}
\put(-40,50){$KB_{Q(S)}$}
\end{picture}
\caption{The domains $U_{S}^{\pm}$ above and below $\d\Omega$ in $KB_{Q(S)}$.}
\label{f:OmegaS1}
\end{figure}

Let $\cW$ denote the Whitney cubes in $\Omega$ and define
\[
\cW_{S}^{\pm}=\{Q\in \cW :  Q\cap \cnj{U_{S}^{\pm}}\neq\emptyset\}
\]
and for $\lambda>1$ close to one, let
\[
\Omega_{S}^{\pm}=\bigcup_{Q\in \cW_{S}^{\pm}} \lambda Q.
\]
See Figure \ref{f:OmegaS2}. It is not hard to show that $\Omega_{S}^{\pm}$ are CADs. 

\begin{figure}[!ht]
\includegraphics[width=300pt]{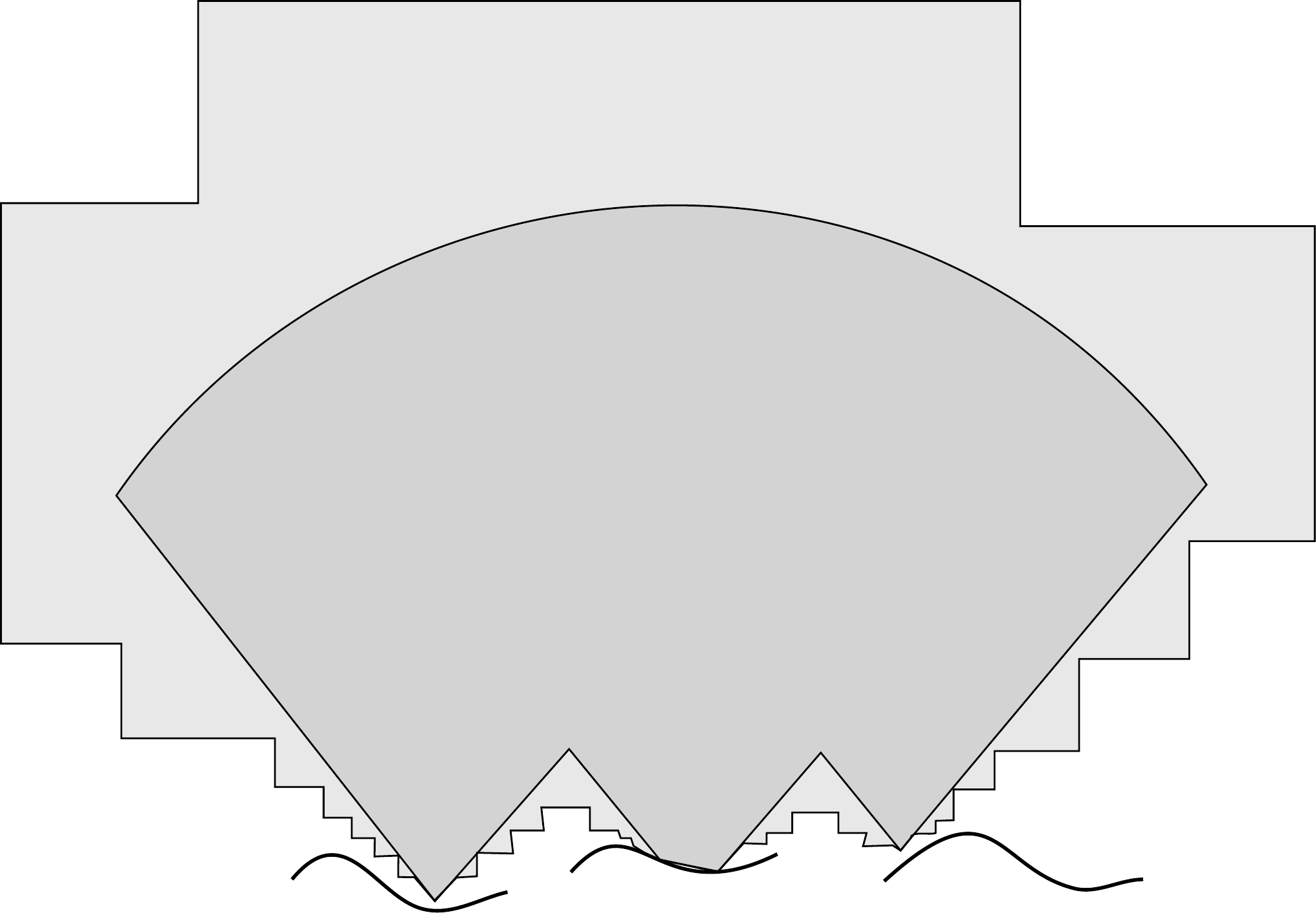}
\begin{picture}(0,0)(350,0)
\put(200,100){$U_{S}^{+}$}
\put(200,175){$\Omega_{S}^{+}$}
\put(80,0){$\d\Omega$}
\end{picture}
\caption{The domain $\Omega_{S}^{+}$ is obtained by fattening $U_{S}^{+}$ with fattened Whitney cubes.  }
\label{f:OmegaS2}
\end{figure}

For $Q \in m(S)$, recall from Theorem \ref{t:Christ} that $\zeta_{Q}$ is the center of $Q$. Let $\gamma_{Q}^{\pm}$ be the center of the cube $I_{Q}^{\pm} \in \cW_{S}^{\pm}$ for which 
\[
F_{S}(\pi_{S}(\zeta_{Q}))\pm 
 d_{S}(F_{S}(\zeta_{Q}))e_{S} 
\in  I_{Q}^{\pm}.
\]
Note that $F_{S}(\pi_{S}(\zeta_{Q}))\in \Gamma_{S}\cap \frac{K}{2}B_{Q(S)}$ by definition and for $K\gg 1$, so the vector above is in $U_{S}^{\pm}\cap \frac{K}{2}B_{Q(S)}$, hence $I_{Q}^{\pm}$ is well defined.

Note that by the interior corkscrew property, at least one of the domains $\Omega_{S}$ must be in $\Omega$, and both could be. We will pick exactly one of them for each $S\in \cF$ as follows. 

Pick a corkscrew point for $B_{Q_{0}}$. For $\eta>0$ small enough, we can guarantee it is also a corkscrew point for either $\Omega_{\tilde{S_{0}}}^{\pm}$, say it is $\Omega_{\tilde{S_{0}}}^{+}$ and set
\[
\Omega_{{S_{0}}}=\Omega_{{S_{0}}}^{+} \;\;\; \mbox{ and }\;\;\; U_{S_{0}}=U_{S_{0}}^{+}.
\]
Let $n\geq 0$ and suppose we have defined $\Omega_{S}$ for all $S\in {\cF}_{n}$. Let $S\in {\cF}_{n+1}$. Note that by assumption, at most $N$ ancestors of $Q(S)$ are not in some other stopping time $S'\in \tilde{S}$. In particular, if $S'\in \cF_{n}$ is such that $Q(S)\subseteq Q(S')$, there is $\hat{Q}(S)\in m(S')$ so that $\hat{Q}(S)\supseteq Q(S)$ and $\ell(\hat{Q}(S))/\ell(Q(S))\lec_{N} 1$.

By semi-uniformity, there is a Harnack chain from $x_{S}$ (which we define to be whichever of $\gamma_{\hat{Q}(S)}^{\pm}$ is in $\Omega_{S'}$)  to a corkscrew in $B_{Q(S)}$. See Figure \ref{f:omegaS3} below. However, by \eqref{e:closegraph} and since $\Gamma_{S}$ is $\eta$-Lipschitz, for $\eta>0$ small enough, we know this corkscrew point is also a corkscrew point for either $\Omega_{S}^{\pm}$, we suppose it is $\Omega_{S}^{+}$ and call the corkscrew point $x(S)$. 


Let $T_{S}'$ be the union of the balls in this Harnack chain and let 
\[
T_{S}=\bigcup_{Q\in \cW \atop Q\cap T_{S}'\neq\emptyset} \lambda Q. 
\]

\begin{figure}[!ht]
\includegraphics[width=300pt]{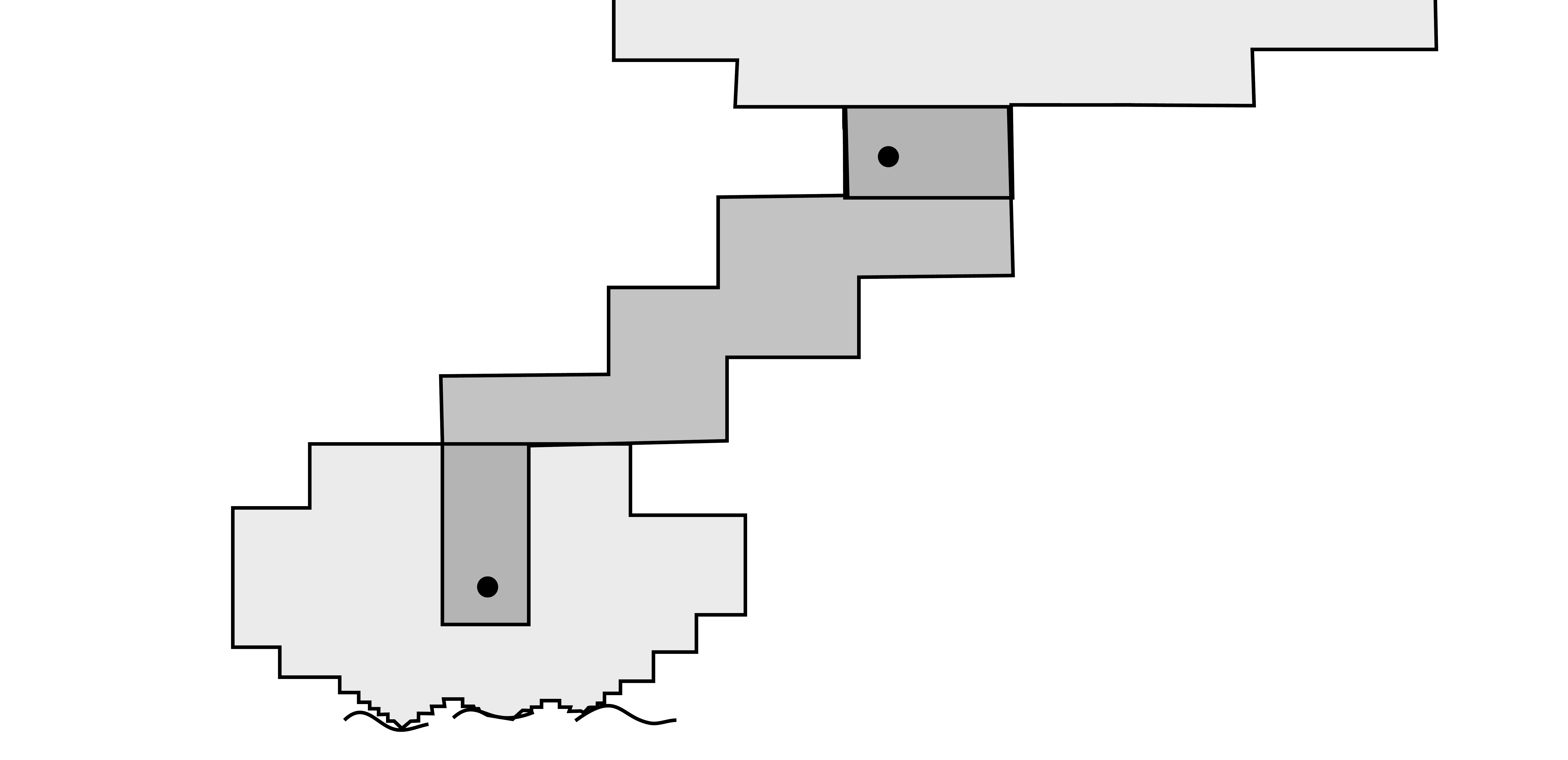}
\begin{picture}(0,0)(350,0)
\put(100,25){$\Omega_{S}^{+}$}
\put(190,80){$T_{S}$}
\put(260,130){$\Omega_{S'}$}
\put(220,110){$x_{S}$}
\end{picture}
\caption{This shows how $\Omega_{S}$ constructed by adding to $\Omega_{S}^{+}$ a path of cubes $T_{S}$ from $\Omega_{S}^{+}$ to the domain $\Omega_{S'}$ above it.}
\label{f:omegaS3}
\end{figure}

We now set
\[
\Omega_{S}= \Omega_{S}^{+}\cup T_{S}.
\]
Note that
\[
x_{S}\in \Omega_{S'}\cap \Omega_{S}\]
and is a corkscrew point for each of these domains in the sense that 
\[
\delta_{\Omega_{S}}(x_{S})\sim 
\delta_{\Omega_{S'}}(x_{S})
\sim \delta_{\Omega}(x_{S})\sim \ell(Q(S)).\] 
It is also not hard to show that each $\Omega_{S}$ is a CAD. 

Now we define
\[
\Omega_{0}=\bigcup_{S\in \tilde{\cF}} \Omega_{S}.
\]
Note that $\diam \Omega_{0}\sim \ell(Q_{0})$, and it is not hard to see that 
\[
\d\Omega_{0}\cap \d\Omega = \cnj{G}.
\]
Also, let $\cW_{0}$ be all cubes $Q\in \cW$ that intersect $U_{S}\cup T_{S}$ for some $S\in \tilde{\cF}$, then we have by definition that
\[
\Omega_{0}=\bigcup_{Q\in \cW_{0}} \lambda Q.
\]

\begin{lemma}
For $S\in \tilde{\cF}$,
\begin{equation} 
\label{e:closegraph2}
\dist(x,\d\Omega)\lec \eta d_{S}(x) \mbox{ for all }x\in \Gamma_{S}\cap K^2B_{Q(S)}
\end{equation}
and
\begin{equation} 
\label{e:closegraph3}
\dist(x,\Gamma_{S})\lec \eta d_{S}(x) \mbox{ for all }x\in \d\Omega \cap K^2B_{Q(S)}
\end{equation}
\end{lemma}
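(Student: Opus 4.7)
The plan is to prove both estimates by the same device: given $x$, locate a single cube $R \in S$ with $\ell(R) \sim d_{S}(x)$ and $\dist(x,R) \lesssim d_{S}(x)$ so that $x$ lies in a slightly enlarged Christ ball $CB_{R}$, and then invoke the flatness estimate \eqref{e:closegraph} on that $R$ directly. The Lipschitz nature of $d_{S}$ is not really needed; what matters is the comparability $\ell(R) \sim d_{S}(x)$.

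First I would check a trivial bound for $d_{S}(x)$ on the region in question: for $x \in K^{2}B_{Q(S)}$, taking the infimum with $Q = Q(S)$ gives $d_{S}(x) \leq \dist(x,Q(S)) + \ell(Q(S)) \lesssim K^{2}\ell(Q(S))$. Next, given $x$, select $Q^{*} \in S$ nearly attaining the infimum, i.e. $\dist(x,Q^{*}) + \ell(Q^{*}) \leq 2 d_{S}(x)$. Then I would split into two cases. If $d_{S}(x) \geq \ell(Q(S))$, take $R = Q(S)$; the hypothesis $x \in K^{2}B_{Q(S)}$ is automatic and $\ell(R) \leq d_{S}(x)$ by the case assumption. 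If $d_{S}(x) < \ell(Q(S))$, take $R$ to be the smallest ancestor of $Q^{*}$ with $\ell(R) \geq d_{S}(x)$; such an $R$ exists and satisfies $R \subsetneq Q(S)$, hence $R \in S$ by property (b) of stopping-time regions, with $\ell(R) \leq \rho^{-1}d_{S}(x)$ (or $\ell(R) = \ell(Q^{*}) \leq 2d_{S}(x)$ if $R = Q^{*}$). In either sub-case, $\dist(x,R) \leq \dist(x,Q^{*}) \leq 2 d_{S}(x)$, so
\[
|x - \zeta_{R}| \leq \dist(x,R) + \ell(R) \lesssim d_{S}(x) \lesssim \ell(R),
\]
i.e. $x \in C B_{R}$ for an absolute constant $C$. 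Since $K$ was chosen $\gg 1$, we have $C \leq K^{2}$.

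With $R \in S$ and $x \in K^{2}B_{R}$ in hand, the first estimate is immediate: $x \in \Gamma_{S} \cap K^{2}B_{R}$ (using the hypothesis $x \in \Gamma_{S}$), so \eqref{e:closegraph} gives $\dist(x,\d\Omega) < \eta\ell(R) \lesssim \eta d_{S}(x)$. For the second estimate, $x \in \d\Omega \cap K^{2}B_{R}$ and \eqref{e:closegraph} yields $\dist(x,\Gamma_{S}) < \eta \ell(R) \lesssim \eta d_{S}(x)$.

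The only subtlety is the case split and the verification that the chosen ancestor $R$ is still contained in $Q(S)$ so that property (b) places it in $S$; beyond that, the argument is essentially a bookkeeping exercise that converts the ``for every $Q \in S$'' statement of \eqref{e:closegraph} into a single well-chosen scale. There is no genuine obstacle, just a choice of constants (we absorb the factors from $\rho^{-1}$ and $C$ into the $\lesssim$ of the conclusion, and we tacitly assume $K$ was fixed large enough in Lemma \ref{l:HMM} to dominate $C$).
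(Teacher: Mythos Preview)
Your proof is correct and follows essentially the same approach as the paper: both arguments locate a single cube $R\in S$ with $x\in K^{2}B_{R}$ and $\ell(R)\lesssim d_{S}(x)$, then invoke \eqref{e:closegraph}. The only difference is bookkeeping---the paper splits on whether $\ell(Q')>\dist(x,Q')$ for the near-minimizing cube $Q'$ and then climbs to the \emph{largest} ancestor with $\ell(Q)\leq\dist(x,Q)$, whereas you split on whether $d_{S}(x)\geq\ell(Q(S))$ and climb to the \emph{smallest} ancestor with $\ell(R)\geq d_{S}(x)$; both routes land on a cube of the right scale. One small slip: in your Case~2 the ancestor $R$ need not satisfy $R\subsetneq Q(S)$ (when $\rho\,\ell(Q(S))<d_{S}(x)<\ell(Q(S))$ you get $R=Q(S)$), but this is harmless since you only need $R\subseteq Q(S)$ to conclude $R\in S$.
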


\begin{proof}
First we'll prove \eqref{e:closegraph2}, the proof of \eqref{e:closegraph3} is identical. Let $x\in \Gamma_{S}\cap K^2B_{Q(S)}$ and $Q'\in S$ be such that 
\[
\ell(Q')+\dist(x,Q') \leq 2d_{S}(x).\]
We divide into a few cases.
\begin{enumerate}
\item If $\ell(Q')>\dist(x,Q')$, then $x\in K^2B_{Q}$ for $K\gg 1$, and so \eqref{e:closegraph} implies
\[
\dist(x,\d\Omega) <\eta\ell(Q')\leq 2\eta d_{S}(x).\]
\item Now suppose $\ell(Q')\leq \dist(x,Q')$. Replace $Q'$ with its largest ancestor $Q\in S$ for which $\ell(Q)\leq \dist(x,Q)$, so we still have 
\[
\ell(Q)+\dist(x,Q)\leq 2\dist(x,Q)\leq 2\dist(x,Q')\leq 4d_{S}(x).\]
\begin{enumerate}[(a)]
\item If $Q\neq Q(S)$, then $\ell(Q)\sim \dist(x,Q)$, so for $K$ large enough, $x\in K^2B_{Q}$, and so \eqref{e:closegraph} implies
\[
\dist(x,\d\Omega)<\eta \ell(Q)\lec \eta d_{S}(x).
\]
\item If $Q=Q(S)$, then 
\[
\ell(Q)\leq \dist(x,Q)\leq \diam K^2B_{Q(S)} = 2K^2\ell(Q(S))=2K^2\ell(Q).
\]
Thus, since $x\in K^2B_{Q(S)}$, \eqref{e:closegraph} again implies
\[
\dist(x,\d\Omega)<\eta \ell(Q(S))=\eta\ell(Q)\leq 2\eta d_{S}(x).
\]
\end{enumerate}\end{enumerate}

\end{proof}

\begin{lemma}
\[
\dist(x,G)\lec  \dist(x,\d\Omega)\mbox{ for all }x\in \Omega_{0}.
\]
In particular, this follows from
\begin{equation}
\label{e:gs}
d_{S}(x)\sim \dist(x,G\cap Q(S))\sim \dist(x,\d\Omega) \mbox{ for all }x\in \Omega_{S}.
\end{equation}

\end{lemma}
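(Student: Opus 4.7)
The plan is to prove the stronger second display (the $\sim$ chain for $x\in\Omega_S$); the global bound $\dist(x,G)\lec\dist(x,\d\Omega)$ on $\Omega_0$ then follows instantly, since $\Omega_0=\bigcup_{S\in\tilde{\cF}}\Omega_S$ and $G\cap Q(S)\subseteq G$. I would proceed in two steps, and in each step split by whether $x$ lies in the graph-based piece $\Omega_S^+$ or in the Harnack-chain tube $T_S$.

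\emph{Step 1.} ($d_S(x)\sim\dist(x,\d\Omega)$.) For $x\in\Omega_S^+$, choose $Q_W\in\cW_S^+$ with $x\in\lambda Q_W$ and pick a witness $z\in Q_W\cap\cnj{U_S^+}$. Writing $z=\pi_S(z)+te_S$, set $h:=t-f_S(\pi_S(z))\ge d_S(F_S(\pi_S(z)))$. The $1$-Lipschitz property of $d_S$ gives $d_S(z)\le 2h$; the $\eta$-Lipschitz structure of $\Gamma_S$ gives $\dist(z,\Gamma_S)\ge h/\sqrt{1+\eta^2}$; and \eqref{e:closegraph2}--\eqref{e:closegraph3} let one replace $\Gamma_S$ by $\d\Omega$ up to $O(\eta)$ error. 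Collecting these (for $\eta$ small) yields $d_S(z)\sim\dist(z,\d\Omega)\sim h$, and $|x-z|\lec\ell(Q_W)\sim\dist(z,\d\Omega)$ together with Lipschitzness of $d_S$ transfers the estimate to $x$. For $x\in T_S$, the Harnack chain from $x_S$ to a corkscrew in $B_{Q(S)}$ connects points whose distances to $\d\Omega$ and whose mutual distance are all $\sim\ell(Q(S))$, so the chain has bounded length and every Whitney cube it crosses has sidelength $\sim\ell(Q(S))$; both $d_S(x)$ and $\dist(x,\d\Omega)$ are then $\sim\ell(Q(S))$ directly.

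\emph{Step 2.} ($d_S(x)\sim\dist(x,G\cap Q(S))$.) The inequality $d_S(x)\lec\dist(x,G\cap Q(S))$ is immediate from Step 1 since $G\cap Q(S)\subseteq\d\Omega$. For the reverse direction, set $r=d_S(x)$ and pick $Q^*\in S$ with $\dist(x,Q^*)+\ell(Q^*)\le 2r$, so $Q^*\subseteq Q(S)$ and every point of $Q^*$ lies within $4r$ of $x$. If $Q^*\cap G\ne\emptyset$ any such point works. Otherwise, writing $S=\tilde{S'}$ for some $S'\in\cF$, the definition of $\tilde{S'}$ forces every ancestor of $Q^*$ in $S'$ to have all of its siblings meeting $G$; ascending at most one generation (or a bounded number, controlled by the depth cap $E_0$, if $Q^*\in m(S)$) produces a cube $R\subseteq Q(S)$ with $\ell(R)\sim\ell(Q^*)$, $\dist(x,R)\lec r$, and $R\cap G\ne\emptyset$. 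The trimming by $E_1$ guarantees that the selected point of $G$ lies in the $(1-\tau)$-interior of $R$, so no extra distance is incurred. For $x\in T_S$ the argument is softer, as $T_S$ sits within $\sim\ell(Q(S))$ of $Q(S)$ and $Q(S)\cap G\ne\emptyset$ by Remark \ref{r:remark}(1), so a point of $G\cap Q(S)$ lies within $\lec\ell(Q(S))\sim d_S(x)$ of $x$.

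The main obstacle is Step 2: producing a cube $R\subseteq Q(S)$ meeting $G$ at scale comparable to $\ell(Q^*)$ when $Q^*$ itself misses $G$. This is precisely what the definition of $\tilde S$ (removing descendants of cubes whose siblings miss $G$) is engineered to provide, in tandem with the depth cap $E_0$ (which bounds how many generations one might have to ascend) and the boundary-strip cap $E_1$ (which keeps the good point away from $\d R$ so the distance bound is preserved); none of these three trims can be dropped.
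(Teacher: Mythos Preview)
Your Step 1 is correct and essentially mirrors the paper's argument (you even handle the tube $T_S$ explicitly, which the paper's write-up glosses over). But you have overcomplicated Step 2 and misidentified the main obstacle.

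The point you are missing is that the trimmed stopping time is engineered so that \emph{every} cube $T\in S$ (for $S\in\tilde{\cF}$) already meets $G$. Unwind the definition: a cube is removed in passing from $S'$ to $\tilde{S'}$ as soon as it lies in some $Q_j$, i.e.\ some cube whose parent has a child missing $G$; so for a surviving cube, every child of its parent meets $G$, including the cube itself. Your ``otherwise'' case in Step 2 is therefore vacuous. The paper simply picks $T\in S$ with $\dist(x,T)+\ell(T)<2d_S(x)$, notes $T\cap G\ne\emptyset$, and concludes $\dist(x,G\cap Q(S))\le\dist(x,T)+\diam T\lec d_S(x)$ in one line.

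The reasoning you supply in the vacuous branch is also off. The set $E_0$ caps the number of cubes from $\cT$ (tops $Q(S)$, minimal cubes, bad cubes) containing a given point; it says nothing about how many generations one must ascend \emph{within} a single stopping time, so it would not control the ascent you describe. And even granting your hypothetical, a single sibling step would already produce a cube meeting $G$, so no iterated ascent is needed. Likewise the trim $E_1$ plays no role in this lemma; its purpose is the separation estimate \eqref{e:taudist} between disjoint cubes of $\cT$, which is used later in the uniformity proof (Lemma \ref{l:omegauniform}), not here. So of your three ``trims'', only the definition of $\tilde S$ is actually used in this lemma, and it makes Step 2 immediate. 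The real content of the lemma is your Step 1.
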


\begin{proof}

Suppose $x\in \Omega_{S}$, then $x\in \lambda I $ for some $I\in \cW_{0}$ so that $I\cap U_{S}\neq\emptyset$. Let $y\in I\cap U_{S}$ and $y'=F_{{S}}(\pi_{S}(y))\in \Gamma_{S}$. \\

We first claim that 
\begin{equation}
\label{e:disty-y'}
\dist(y,\d\Omega)\sim |y-y'|.
\end{equation}
Note that since $y\in U_{S}^{+}$,
\begin{align}
|y-y'|
& \geq (y-y')\cdot e_{S}
=y\cdot e_{S} - F_{S}(\pi_{S}(y))\cdot e_{S} \notag \\
& >f_{S}(\pi_{S}(y))+d_{S}(F_{S}(y))-f_{S}(\pi_{S}(y))
=d_{S}(F_{S}(y)).
\label{e:y-y'<ds}
\end{align}
This and the fact that $\Gamma_{{S}}$ is a Lipschitz graph imply
\begin{equation}
\label{e:y-y'<gammas}
d_{S}(y') \leq  |y-y'|\sim \dist(y,\Gamma_{{S}}).
\end{equation}
%
%
By \eqref{e:closegraph} we have $\dist(\zeta_{Q(S)},\Gamma_{S})<\eta \ell(Q(S))$, and since $y\in KB_{Q(S)}$,
\begin{equation}
\label{e:y-zeta<K+1}
|y-y'|\sim \dist(y,\Gamma_{S})
\leq |y-\zeta_{Q(S)}|+\eta \ell(Q(S))
< (K+1)\ell(Q(S)),
\end{equation}
and so
\begin{equation}
\label{e:y'inBS}
|y'-\zeta_{Q(S)}|\leq |y'-y|+|y-\zeta_{Q(S)}|
\stackrel{\eqref{e:y-zeta<K+1}}{\leq}
 \lec K\ell(Q(S)).
\end{equation}
Thus, for some constant $C$ and for $K$ large enough (depending on $C$), 
\[
y'\in \Gamma_{S}\cap CKB_{Q(S)}\subseteq K^{2}B_{Q(S)}.\] 
Hence, for $\eta\ll1$,  \eqref{e:closegraph2} implies
\[
\dist(y,\d\Omega)\leq |y-y'|+\dist(y',\d\Omega)
<|y-y'|+C\eta d_{S}(y')
\leq 2|y-y'|.
\]
So it suffices to show $|y-y'|\lec \dist(y,\d\Omega)$. 

If $B(y,|y-y'|)\cap \d\Omega=\emptyset$, then the inequality follows immediately, so assume instead that there is 
\begin{equation}
\label{e:zeta}
\zeta\in B(y,|y-y'|)\cap \d\Omega.
\end{equation} Since $y,KB_{Q(S)}$ and $y'\in CKB_{Q(S)}$, we have 
\[
\zeta\in (C+1)KB_{Q(S)}\subseteq K^{2}B_{Q(S)}\]
 so by \eqref{e:closegraph3},
\[
\dist(\zeta,\Gamma_{{S}})
< \eta d_{S}(\zeta)
\leq \eta (d_{S}(y')+|y'-\zeta|)
\stackrel{\eqref{e:zeta}}{\leq} \eta( d_{S}(y')+2|y'-y|)
\stackrel{\eqref{e:y-y'<ds}}{\lec} \eta |y-y'|.
\]
Let $z=F_{S}(\pi_{S}(\zeta))$. 
Since $\Gamma_{S}$ is a $\eta$-Lipschitz graph, the previous inequality implies $|\zeta-z|\lec \eta |y-y'|$ and so
\begin{align*}
e_{S}(y'-\zeta)
& \leq e_{S}\cdot (y'-z) + C\eta |y-y'|
 \leq \eta |\pi_{S}(y')-\pi_{S}(z)|+ C\eta |y-y'|\\
& \leq \eta |y'-\zeta|+ C\eta |y-y'|
\lec \eta|y-y'|.
\end{align*}
Thus, for all $\zeta\in B(y,|y-y'|)\cap \d\Omega$, 
\[
|y-\zeta|
\geq e_{S}\cdot (y-\zeta)
\geq e_{S}\cdot (y-y')+ e_{S}\cdot (y'-\zeta)
\geq |y-y'|-C\eta |y-y'|
\gec |y-y'|.
\]
Infimizing over all $\zeta\in B(y,|y-y'|)\cap \d\Omega)$, we get
\[
\dist(y,\d\Omega)
=\dist(y,B(y,|y-y'|)\cap \d\Omega)
\gec |y-y'|.
\]
And this proves \eqref{e:disty-y'}. 
%

Since $I$ is a Whitney cube, $\dist(z,\d\Omega)\sim \diam(I)$ for all $z\in \lambda I$, and so
\[
d_{S}(x)
\leq d_{S}(y')+|y'-y|+|y-x|
\stackrel{\eqref{e:y-y'<ds}}{\leq} 2|y'-y|+\diam I
\stackrel{\eqref{e:disty-y'}}{\lec} \dist(y,\d\Omega)
\]
and so
\[
d_{S}(x) \lec \dist(y,\d\Omega)\sim \dist(x,\d\Omega)
\leq \dist(x,G\cap Q(S)).\]
Moreover, if $T\in S$ is such that $\dist(x,T)+\ell(T)<2d_{S}(x)$, then $T\cap G\neq\emptyset$, so we have that
\[
\dist(x,G\cap Q(S))
\leq \dist(x,T)+\diam T\lec  d_{S}(x).
\]

\end{proof}

\begin{lemma}
\label{l:omegacorkscrews}
The domain $\Omega_{0}$ has the interior corkscrew property. 
\end{lemma}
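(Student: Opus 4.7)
The plan is to show, for each ball $B=B(y,r_B)$ with $y\in \d\Omega_0$ and $0<r_B<\diam \d\Omega_0$, the existence of $x\in B\cap \Omega_0$ satisfying $B(x,cr_B)\subseteq \Omega_0$ for some uniform $c>0$. I split the argument into two cases according to whether $y$ lies on $\d\Omega$ or not.

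\emph{Case 1: $y\in \cnj G=\d\Omega_0\cap \d\Omega$.} By continuity I may assume $y\in G$. Using Remark \ref{r:remark}(3) together with $r_B\lec \diam \d\Omega_0 \sim \ell(Q_0)$, I locate a stopping region $\tilde S\in \tilde{\cF}$ (climbing to a suitable ancestor region if necessary) and a cube $Q\in \tilde S$ with $y\in Q$ and $\ell(Q)\sim \epsilon r_B$ for a small constant $\epsilon$ to be chosen. Without loss of generality $\Omega_{\tilde S}=\Omega_{\tilde S}^+$, and I set
\[
x=F_{\tilde S}(\pi_{\tilde S}(y))+\beta r_B\, e_{\tilde S},\qquad \epsilon\ll\beta<1.
\]
Combining $|y-F_{\tilde S}(\pi_{\tilde S}(y))|\lec \eta\ell(Q)$ (from \eqref{e:closegraph3}) with the bound $d_{\tilde S}(F_{\tilde S}(\pi_{\tilde S}(y)))\lec \ell(Q)\sim\epsilon r_B$ shows that $x$ lies at vertical distance $\gec r_B$ above the shifted graph bounding $U_{\tilde S}^+$, that $|x-y|<r_B$, and that $B(x,cr_B)\subseteq U_{\tilde S}^+\subseteq \Omega_{\tilde S}\subseteq \Omega_0$ for a suitable small $c$ (the containment in $KB_{Q(\tilde S)}$ is automatic for $K$ large since $|x-\zeta_{Q(\tilde S)}|\lec \ell(Q(\tilde S))$).

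\emph{Case 2: $y\in \d\Omega_0\setminus \d\Omega$.} Let $d=\dist(y,\d\Omega)>0$. Since $\Omega_0=\bigcup_{Q\in\cW_0}\lambda Q$, there is $Q\in\cW_0$ with $y\in \d(\lambda Q)$, and the Whitney condition forces $\ell(Q)\sim d$. If $r_B\leq c_0 d$ for a small constant $c_0$, I simply move from $y$ into the interior of $\lambda Q$ by distance $\beta r_B$ along the inward normal to the face containing $y$, producing $x$ with $B(x,cr_B)\subseteq \lambda Q\subseteq \Omega_0$. If instead $r_B>c_0 d$, then $Q\in \cW_{\tilde S}^\pm$ for some $\tilde S\in \tilde{\cF}$, so $Q$ meets $\overline{U_{\tilde S}}$ and is hence adjacent to cubes of $\tilde S$ of side $\sim d$; descending through $\tilde S$ to a cube that meets $G$ produces a point $\xi\in \cnj G$ with $|y-\xi|\lec d\leq c_0^{-1}r_B$. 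For $c_0$ small enough $B(\xi,r_B/4)\subseteq B$, and applying Case 1 at $\xi$ to this ball yields a corkscrew $x\in B\cap \Omega_0$ with $B(x,c'r_B)\subseteq \Omega_0$.

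The main obstacle is the sub-case $r_B>c_0 d$ of Case 2: I need to verify that $\cnj G$ always sits within a controlled multiple of $d$ of any $y\in \d\Omega_0\setminus \d\Omega$, equivalently that any cube $R\in \tilde S$ adjacent to a given $Q\in \cW_0$ admits a $G$-meeting descendant in $\tilde S$. This should follow from the Carleson packing estimates for $\cB$ and $\{m(S)\}_{S\in\cF}$ together with the fact that $U_S$ is built so that every cube of $\cW_S^\pm$ is positioned near cubes of $S$, but tracking it carefully—especially quantifying the descent step—is the crux of the argument.
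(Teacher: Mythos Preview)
Your case structure mirrors the paper's, and your Case~2 reduction to Case~1 is exactly the paper's strategy. However, the ``crux'' you flag --- that $\dist(y,\cnj G)\lec \dist(y,\d\Omega)$ for $y\in\d\Omega_0\setminus\d\Omega$ --- is not something you need to rederive: it is precisely the content of the preceding lemma, equation~\eqref{e:gs}, which states $d_S(x)\sim\dist(x,G\cap Q(S))\sim\dist(x,\d\Omega)$ for all $x\in\Omega_S$. That estimate extends to $\d\Omega_0$ by continuity and gives you the nearby point $\xi\in\cnj G$ immediately. Your attempted inline argument for this also has a gap: you assume $Q\in\cW_{\tilde S}^\pm$, i.e.\ that $Q$ meets some $\cnj{U_{\tilde S}}$, but cubes of $\cW_0$ can instead meet only the connecting tube $T_S$, in which case they are not a priori near the graph $\Gamma_{\tilde S}$; the proof of \eqref{e:gs} handles this uniformly.

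In Case~1 your approach also diverges from the paper's. You try to place a corkscrew directly above the graph $\Gamma_{\tilde S}$ at height $\beta r_B$, which requires simultaneously finding $\tilde S\in\tilde\cF$ and $Q\in\tilde S$ with $y\in Q$, $\ell(Q)\sim\epsilon r_B$, \emph{and} $\ell(Q(\tilde S))\gec r_B$ (so that $x\in KB_{Q(\tilde S)}$). Such a pair need not exist for a single $\tilde S$: the minimal cube of $\tilde S$ through $y$ may have side much larger than $\epsilon r_B$, and the stopping region below it may have top much smaller than $r_B$. The paper sidesteps this by instead choosing $S$ with $\ell(Q(S))$ just above $r/M^2$ and then exploiting that each $\Omega_S$ is already a CAD: either $\Omega_S\subseteq B(y,r/2)$ gives $x_S$ as the corkscrew, or one runs a cigar curve in $\Omega_S$ from a deep point out through $\d B(y,r/2M)$ and takes the crossing point. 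This avoids any delicate scale-matching within a single stopping region.
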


\begin{proof}

Let $z\in \d\Omega_{0}$ and $0<r<\diam \Omega_{0}$. Then 
\[
d_{S_{0}}(x)\leq \ell(Q_{0})+\diam \Omega_{0}\lec \ell(Q_{0}),\]
and so $d_{S_{0}}(x)<M\ell(Q_{0})$ for some number $M$ depending on the semi-uniformity of $\Omega$. 

Let $x\in \d\Omega_{0}$ and $0<r<\diam \d\Omega_{0}$. We split into several cases. \\

\begin{enumerate}
\item If $x\in \d\lambda I$ for some $I\in \cW_{0}$ and $r\leq M  \ell(I)$, then we can pick any ball inside $B(x,r)\cap \lambda I$ of radius comparable to $ r$. \\

\item  Suppose $x\in \d\lambda I$ for some $I\in \cW_{0}$ and $M\ell(I)<r<\diam \d\Omega_{0}$. Note that by \eqref{e:gs},
\[
\dist(x,G)
\lec \dist(x,\d\Omega)
\lec \ell(I)\leq M^{-1} r,
\]
hence for $M$ large enough, we can find $y\in G\cap B(x,\frac{r}{2})$. Thus, we just need to find an interior corkscrew for $B(y,r/2)$, since it will then also be one for $B(x,r)$.

Let $S\in \tilde{\cF}$ be so that $y\in Q(S)$ and $\ell(Q(S))$ is minimal such that $\ell(Q(S))> r/M^{2}$. 

\begin{enumerate}[(a)]
\item[(2.a)] If $\ell(Q(S))< r/M$, then for $M$ large enough, $\Omega_{S}\subseteq B(x,r)$ and $x_{S}$ is a corkscrew point for $B(x,r)$.
\item[(2.b)] If $\ell(Q(S))\geq r/M$, then recall from Remark \ref{r:remark} that since $y\in G$, $y$ is contained in infinitely many cubes from some stopping time in $\tilde{F}$. 
\begin{enumerate}[(i)]
\item[(2.b.i)] If this stopping time is not $S$, then there is  $S'\tilde{F}$ a maximal stopping time for which $Q(S')\subsetneq Q(S)$ and $y\in Q(S')$. Then $\ell(Q(S'))\leq r/M^{2}$ by the minimality of $S'$, and so $\Omega_{S'}\subseteq B(y,\frac{r}{4M})$ for $M$ large enough. Since $\Omega_{S}$ is chord-arc, it is uniform and there is a cigar curve $\gamma$ of bounded turning between $x_{S'}$ and $x_{S}$. Let $z\in \d B(y,\frac{r}{2M})\cap \gamma$. Then
\begin{align*}
\delta_{\Omega}(z)
& \geq \delta_{\Omega_{S}}(z)
\gec \min \{|z-x_{S'}|,|z-x_{S}|\}\\
& \geq \min\ck{\frac{r}{2N}-\frac{r}{4M},\frac{r}{M}-\frac{r}{2M}}\gec \frac{r}{M}.
\end{align*}
Thus, $z$ is a corkscrew point for $B(y,r/2)$ with constant depending on $M$.
\item[(2.b.ii)] If it is $S$, then $y\in \d \Omega_{S}$, and we can connect $y$ to $x_{S}$ directly with a cigar curve of bounded turning in $\Omega_{S}$ and then the proof is just as in the previous case. 
\end{enumerate}
\end{enumerate}

\item If $x\in \cnj{G}=\d\Omega\cap \d\Omega_{0}$, then we repeat the argument in Case 2 with $x$ in place of $y$. 
\end{enumerate}
\end{proof}

\begin{lemma}
\label{l:omegauniform}
The domain $\Omega_{0}$ is uniform. 
\end{lemma}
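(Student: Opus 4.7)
The plan is to verify the Harnack chain condition of \Theorem{l:ahmnt}; the interior corkscrew property is already \Lemma{l:omegacorkscrews}. Given $x,y\in\Omega_{0}$, by construction there exist $S_{x},S_{y}\in\tilde{\cF}$ with $x\in\Omega_{S_{x}}$ and $y\in\Omega_{S_{y}}$. I will build the Harnack chain in three pieces: connect $x$ to the corkscrew $x_{S_{x}}\in\Omega_{S_{x}}$, climb the tree of stopping-time regions from $S_{x}$ to the minimal common ancestor $S^{*}$ of $S_{x}$ and $S_{y}$ and then descend to $S_{y}$, and finally connect $x_{S_{y}}$ to $y$ inside $\Omega_{S_{y}}$.

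For the first and third legs, each $\Omega_{S}$ was already verified to be a CAD, hence uniform, so \Theorem{l:ahmnt} applied inside $\Omega_{S_{x}}$ gives a Harnack chain (in $\Omega_{S_{x}}\subseteq\Omega_{0}$) of length $\lesssim \log(|x-x_{S_{x}}|/\min\{\delta_{\Omega_{S_{x}}}(x),\delta_{\Omega_{S_{x}}}(x_{S_{x}})\})+1$, and similarly for $y$. Because $x_{S_{x}}$ is a corkscrew for $\Omega_{S_{x}}$ of scale $\ell(Q(S_{x}))$, the denominator reduces to $\min\{\delta_{\Omega_{S_{x}}}(x),\ell(Q(S_{x}))\}$. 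Crucially, the Whitney-cube description $\Omega_{0}=\bigcup_{I\in\cW_{0}}\lambda I$ with $I\in\cW$ forces $\delta_{\Omega_{0}}(z)\sim \delta_{\Omega}(z)\sim \delta_{\Omega_{S_{x}}}(z)$ for $z$ in the relevant part of $\Omega_{S_{x}}$, so this bound translates directly into a bound involving $\delta_{\Omega_{0}}(x)$.

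For the middle leg, the definition of $\tilde{\cF}$ and the cutoff set $E_{0}$ ensure that each point of $G$ lies in at most $N$ cubes of the form $Q(S)$, so the path from $S_{x}$ up to $S^{*}$ and down to $S_{y}$ in the tree has at most $2N$ nodes. Consecutive nodes are glued by the tubes $T_{S}$, which are thickenings of semi-uniform Harnack chains of bounded length by construction; concatenating these yields a Harnack chain in $\Omega_{0}$ from $x_{S_{x}}$ to $x_{S_{y}}$ of length $\lesssim N$. Since the balls in this portion all have radii $\sim \ell(Q(S_{i}))$ for the intermediate nodes, and $\ell(Q(S^{*}))\lesssim \max\{\ell(Q(S_{x})),\ell(Q(S_{y}))\}$, the entire middle piece sits at scale $\lesssim \ell(Q(S^{*}))$.

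The main obstacle is assembling these three estimates into a single bound of the form $N_{\Omega_{0}}(x,y)\lesssim \log\bigl(|x-y|/\min\{\delta_{\Omega_{0}}(x),\delta_{\Omega_{0}}(y)\}\bigr)+1$. This will require splitting into cases based on how $|x-y|$ compares to $\max\{\ell(Q(S_{x})),\ell(Q(S_{y}))\}$: when $|x-y|$ is comparable to this maximum, the triangle inequality $|x-x_{S_{x}}|\lesssim \ell(Q(S_{x}))\lesssim |x-y|$ (and similarly for $y$) absorbs the first and third legs into the target logarithm, while the middle leg contributes only an additive constant depending on $N$ and hence on $\delta$. When $|x-y|$ is much smaller, one shows that $S_{x}$ and $S_{y}$ must lie within bounded tree-distance, so the middle leg is even shorter and the two outer pieces can be combined using uniformity of a single $\Omega_{S}$ containing (a neighborhood of) both $x$ and $y$.
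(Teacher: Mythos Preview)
Your overall architecture matches the paper's: reduce to the Harnack chain criterion, use that each $\Omega_{S}$ is a CAD for the within-region legs, and concatenate through the tree of stopping-time regions. But there is a genuine gap in the middle leg and in the final accounting.

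You assert that the middle piece, climbing from $S_{x}$ up to the common ancestor $S^{*}$ and down to $S_{y}$, has length $\lesssim N$, and that $\ell(Q(S^{*}))\lesssim \max\{\ell(Q(S_{x})),\ell(Q(S_{y}))\}$. Neither is true in general. The tubes $T_{S}$ do have bounded length, but going from $x_{S_{i}}$ (a corkscrew at scale $\ell(Q(S_{i}))$) to $x_{S_{i+1}}$ (a corkscrew at scale $\ell(Q(S_{i+1}))$) inside the CAD $\Omega_{S_{i+1}}$ costs $\log\bigl(\ell(Q(S_{i+1}))/\ell(Q(S_{i}))\bigr)$, and nothing bounds this ratio: a single stopping-time region $S'$ can have $\ell(Q(S'))$ arbitrarily large compared to the side length of a minimal cube in $m(S')$. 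The bound $\ell(\hat{Q}(S))/\ell(Q(S))\lesssim_{N}1$ from the construction controls the \emph{minimal} cube of the parent containing $Q(S)$, not the top cube $Q(S')$. So the middle leg telescopes to roughly $\log\bigl(\ell(Q(S^{*}))/\min\{\ell(Q(S_{x})),\ell(Q(S_{y}))\}\bigr)+N$, and you must show that the largest scale traversed is controlled by $|x-y|$.

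This is exactly the hard part, and it is where the $\tau$-separation property \eqref{e:taudist}---built into the construction by removing the boundary layers $Q\setminus(1-\tau)Q$ from $G$---becomes essential. The paper first reduces to $x=x_{Q}$, $y=x_{R}$ with $\dist(\lambda Q,\lambda R)\gtrsim\max\{\ell(Q),\ell(R)\}$, then in the nested case proves $\ell(Q(S_{n-1}))\lesssim |x_{Q}-x_{R}|$ and in the disjoint case proves $|x_{Q}-x_{R}|\gtrsim \tau^{2}\max\{\ell(Q_{S}),\ell(Q_{S'})\}$, in both instances using \eqref{e:taudist} to force points of $G$ in different top cubes to be $\tau$-separated at the relevant scale. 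Your proposal never invokes this property, and your fallback ``when $|x-y|$ is much smaller, $S_{x}$ and $S_{y}$ must lie within bounded tree-distance'' is precisely the statement that needs the $\tau$-separation to be justified.
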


\begin{proof}
Let $x,y\in \Omega_{0}$, $\ve = \min\{\delta_{\Omega_{0}}(x),\delta_{\Omega_{0}}(y)\}$, and $\Lambda = |x-y|$. By Lemmas \ref{l:ahmnt} and \ref{l:omegacorkscrews}, it suffices to show that there is a Harnack chain between every such $x$ and $y$ of length at most $C\log\frac{\Lambda}{\ve}+1$. As in \cite{HM14}, we will first make some reductions. \\

{\bf Case 1.} First, we may assume that any cubes $Q,R\in \cW_{0}$ containing $x$ and $y$ respectively satisfy 
\begin{equation}
\label{e:QRfar}
\dist(\lambda Q, \lambda R)\gec \max\{\ell(Q),\ell(R)\}.
\end{equation}
Since otherwise $Q$ and $R$ must be adjacent and $\lambda Q\cup \lambda R$ forms an NTA domain and it is easy to find a Harnack chain in $\Omega_{0}$ between $x$ and $y$ 

{\bf Case 2.} Next, we claim it suffices to assume $x$ and $y$ are centers of cubes in $\cW_{0}$. 

Indeed, if $x\in \lambda Q$ and $\delta_{\Omega_{0}}(x)\gec \ell(Q)$, then it is not hard to find a short Harnack chain from $x$ to $x_{Q}$. If $\delta_{\Omega_{0}}<\ell(Q)$,  then we may connect $x$ to a point $x'\in B(x,\delta_{\Omega_{0}}(x))\cap \lambda Q$ of distance at least a constant times $\delta_{\Omega_{0}}(x)$ from $\d\lambda Q$, and thus there is a Harnack chain from that point to $x_{Q}$ of length depending on $\log \diam \lambda Q / \delta_{\Omega_{0}}(x) \lec \log  \Lambda/\ve $. Similarly, if $y\in \lambda R$ for some $R\in \cW_{0}$, we can connect $y$ to $x_{R}$ by a chain of length depending on $\Lambda$. Then, by assumption, we can connect $x_{Q}$ and $x_{R}$ by a chain of length at most a constant times
\[
\log \frac{|x_Q-x_{R}|}{\min\{\delta_{\Omega_{0}}(x_{Q}),\delta_{\Omega_{0}}(x_{R})\}}+1
\lec \log \frac{\Lambda}{\ve}+1
\]
where in the last inequality we used the fact that \eqref{e:QRfar} implies $|x_{Q}-x_{R}|\sim |x-y|$. Combining these two chains together gives the desired chain between $x$ and $y$. 

{\bf Case 3.} We now assume $x=x_{Q}$ and $y=x_{R}$ for some cubes $Q,R\in \cW_{0}$ satisfying \eqref{e:QRfar}. \\

{\bf Case 3.a.} Suppose $x_{Q},x_{R}\in \Omega_{S}$. Then since $\Omega_{S}$ is a CAD, it is not hard to find the desired Harnack chain between $x_{Q}$ and $x_{R}$.\\

{\bf Case 3.b.}  Suppose $x_{Q}\in  \Omega_{S_{1}}$ and $x_{R}\in  \Omega_{S_{n}}\backslash \Omega_{S_{n-1}}$ for some $S_{1},S_{n}\in \tilde{\cF}$ where $Q(S_{1})\subseteq Q(S_{n})$ and there are $S_{2},...,S_{n-1}$ so that $Q(S_{i})\subseteq Q(S_{i+1})$. 

Note that since $Q\in \cW_{S}$ and $\lambda Q\subseteq \Omega_{S}\subseteq \Omega_{0}$,
\[
\ell(Q)\lec \delta_{\Omega_{S}}(x_{Q})
\leq  \delta_{\Omega_{0}}(x_{Q})
\leq \delta_{\Omega}(x_{Q})
\sim \ell(Q).
\]

Since $\Omega_{S_{1}}$ is a CAD, there is a Harnack chain from $x_{Q}$ to $x_{S_{1}}$ of length
\[
\lec 
\log\frac{\ell(Q(S_{1}))}{\delta_{\Omega_{S_{1}}}(x_{Q})}+1
\lec \log\frac{\ell(Q(S_{1}))}{\min\{\ell(Q),\ell(R)\}}+1.
\]
For each $i$, there is a Harnack chain in $\Omega_{S_{i+1}}$ from $ x_{S_{i}}$ to $x_{S_{i+1}}$ of length
\[
\lec \log\frac{ |x_{S_{i}}-x_{S_{i+1}}|}{\min\{\delta_{\Omega_{S_{i+1}}}(x_{S_{i}}),\delta_{\Omega_{S_{i+1}}}(x_{S_{i+1}})\}}+1
\lec \log\frac{\ell(Q(S_{i+1}))}{ \ell(Q(S_{i}))}+1.
\]
Finally, we can also show there is a Harnack chain in $\Omega_{S_{n}}$ between $x_{S_{n-1}}$ to $x_{R}$ of length
\[
\lec \log\frac{|x_{R}-x_{S_{n-1}}|}{\min\{\delta_{\Omega_{S_{n}}}(x_{S_{n-1}}),\delta_{\Omega_{S_{n}}}(x_{R})\}}+1
\leq  \log\frac{|x_{R}-x_{Q}| + \ell(Q(S_{n-1}))}{\min\{\ell(R),\ell((Q(S_{n-1}))\}}+1.
\]
Here, we used the fact that $|x_{Q}-x_{S_{n-1}}|\lec \ell(Q(S_{n-1}))$. Adding up these inequalities, we get that there is a Harnack chain from $x_{R}$ to $x_{Q}$ of length no more than
\begin{multline*}
\lec \log\frac{\ell(Q(S_{1}))}{\min\{\ell(Q),\ell(R)\}}+
\sum_{i=1}^{n-2}\ps{ \log\frac{\ell(Q(S_{i+1}))}{ \ell(Q(S_{i}))}+1}\\
+\log\frac{|x_{R}-x_{Q}| + \ell(Q(S_{n-1}))}{\min\{\ell(R),\ell((Q(S_{n-1}))\}}\\
\lec \log\frac{\ell(Q(S_{n-1}))}{\min\{\ell(Q),\ell(R)\}}+\log\frac{|x_{R}-x_{Q}| + \ell(Q(S_{n-1}))}{\min\{\ell(R),\ell(Q(S_{n-1}))\}}+N
\end{multline*}

Our goal is to show
\begin{equation}
\label{e:xqxr>qs}
\ell(Q(S_{n-1}))\lec |x_{Q}-x_{R}|
\end{equation}
in which case the total length of our combined chain will be at most 
\begin{equation}
\label{e:finallength}
\lec \log \frac{|x_{R}-x_{Q}|}{\min\{\ell(Q),\ell(R)\}}+N
\lec \log \frac{\Lambda}{\ve}+1
\end{equation}

{\bf Case 3.b.i.} If $\max\{\ell(Q),\ell(R)\} \geq \tau^{2}  \ell(Q(S_{n-1}))$, then by \eqref{e:QRfar}, we also have that $|x_{Q}-x_{R}|\gec \ell(Q(S_{n-1}))$, which proves \eqref{e:xqxr>qs}. 
%

{\bf Case 3.b.ii.} Now suppose $\max\{\ell(Q),\ell(R)\}<\tau^{2}\ell(Q(S_{n-1}))<\tau^{2}\ell(Q(S_{n}))$.

By \eqref{e:gs}, we may find $\xi_{R}\in Q(S_{n})\cap G$ so that
\[
|\xi_{R}-x_{R}|
\lec \dist(x_{R},\d\Omega) 
\sim \ell(R)< \tau^{2} \ell(Q(S_{n-1})).\]
Similarly, we may find $\xi_{Q}\in Q(S_{1})\cap G\subseteq Q(S_{n-1})\cap G$ so that 
\[
|\xi_{Q}-x_{Q}|\lec \tau^{2} \ell(Q(S_{n-1})).\]

Suppose $\xi_{R}\in Q(S_{n-1})$. Let $T_{0}\in m(S_{n})$ contain $Q(S_{n-1})$. If $T\in S_{n}$ is disjoint from $T_{0}$, then by \eqref{e:taudist}
\begin{multline*}
\dist(x_{R},T)+\ell(T)
\geq \dist(\xi_{R},T)-|\xi_{R}-x_{R}|
\geq \tau \ell(T_{0})-C\tau^{2}\ell(Q(S_{n-1}))
\\ \gec \tau \ell(Q(S_{n-1}))
\end{multline*}
and if $T\in S_{n}$ contains $T_{0}$, clearly 
\[
\dist(x_{R},T)+\ell(T)
\geq \ell(T)\geq \ell(Q(S_{n-1})).\]
Thus, infimizing over all $T\in S_{n}$, we get
\[
\tau^{2} \ell(Q(S_{n-1}))>\ell(R) \stackrel{\eqref{e:gs}}{\sim} d_{S_{n}}(x_{R})\gec \tau \ell(Q(S_{n-1})),\]
which is a contradiction for $\tau$ small enough. Thus, $\xi_{R}\in Q(S_{n})\backslash Q(S_{n-1})$, so $|\xi_{Q}-\xi_{R}|\geq \tau\ell(Q(S_{n-1}))$ by \eqref{e:taudist} since both points are in $G$, thus
\begin{multline*}
|x_{Q}-x_{R}|
\geq |\xi_{Q}-\xi_{R}|-|\xi_{Q}-x_{Q}|-|\xi_{R}-x_{R}|
\gec \tau \ell(Q(S_{n-1}))-\tau^{2}\ell(Q(S_{n-1}))
\\ \gec \tau \ell(Q(S_{n-1}))
\end{multline*}
and thus \eqref{e:xqxr>qs} holds in this case as well.\\

{\bf Case 3.c.}  Finally, suppose $x_{Q}\in \Omega_{S}$ and $x_{R}\in \Omega_{S'}$ where $Q(S)\cap Q(S')=\emptyset$. Let $\tilde{S}$ be the minimal $\tilde{S}\in \tilde{\cF}$ for which $Q(S),Q(S')\subseteq Q(\tilde{S})$. Let $Q_{S}$ and $Q_{S'}$ be the minimal cubes in $\tilde{S}$ containing $Q(S)$ and $Q(S')$ respectively.

We first claim that 
\begin{equation}
\label{e:xqxrqsqs'claim}
|x_{Q}-x_{R}|\gec \tau^{2} \max\{\ell(Q_{S}),\ell(Q_{S'})\}.
\end{equation}

If 
\[
\max\{\ell(Q),\ell(R)\}\geq \tau^{2}\max\{\ell(Q_{S}),\ell(Q_{S'})\},\]
then this holds by \eqref{e:QRfar}, so assume 
\[
\max\{\ell(Q),\ell(R)\}< \tau^{2}\max\{\ell(Q_{S}),\ell(Q_{S'})\}.\]
 Let $\xi_{Q}\in G\cap Q(S)$ and $\xi_{R}\cap G\cap Q(S')$ be closest to $x_{Q}$ and $x_{R}$ respectively. Then by \eqref{e:taudist},
\begin{align*}
|x_{Q}-x_{R}|
& \geq |\xi_{Q}-\xi_{R}|-|\xi_{Q}-x_{Q}|-|\xi_{R}-x_{R}|\\
& \gec \tau (\ell(Q_{S})+\ell(Q_{S'})-\tau^{2}\max\{\ell(Q_{S}),\ell(Q_{S'})\}\\
& \gec \tau\max\{\ell(Q_{S}),\ell(Q_{S'})\}
\end{align*}
and this finishes the claim.

We'll first build a chain from $x_S$ to $x_{S'}$. If $\tilde{S}\in \cF_{n}$, let  $\check{S},\check{S}'\in \cF_{n+1}$ be so that $Q(\check{S})\subseteq Q_{S}$ and $Q(\check{S'})\subseteq Q_{S'}$ contain $Q(S)$ and $Q(S')$ respectively. Observe that $\ell(Q_{S})\sim_{N} \ell(Q(\check{S}))$ and $\ell(Q_{S'})\sim_{N} \ell(Q(\check{S}'))$.  By case 3.b, we know there are chains from $x_{S}$ to $x_{\check{S}}\in \Omega_{\check{S}}\cap \Omega_{\tilde{S}}$ and from $x_{S'}$ to $x_{\check{S'}}\in \Omega_{\check{S'}}\cap \Omega_{\tilde{S}}$ of total length at most 
\begin{equation}
\label{e:the-ups}
\log \frac{\ell(Q(\check{S}))}{\ell(Q(S))}+\log \frac{\ell(Q(\check{S'}))}{\ell(Q(S'))}+1
\lec \log \frac{\max\{\ell(Q(\check{S})),\ell(Q(\check{S'}))\}}{\min\{ \ell(Q),\ell(R)\}}+1
\end{equation}
Next, there is a Harnack chain from $x_{\check{S}}$ to $x_{\check{S}'}$ in $\Omega_{\tilde{S}}$ of length at most
\begin{multline*}
\log \frac{|x_{\check{S}}-x_{\check{S}'}|}{\min\{\delta_{\Omega_{\tilde{S}}}(x_{\check{S}}),\delta_{\Omega_{\tilde{S}}}(x_{\check{S}'})\}}+1
\\
\lec \log \frac{|x_{\check{S}}-x_{\check{S}'}|}{\min\{\ell(Q_{S}),\ell(Q_{S'})\}}+1
\\
 \lec \log \frac{|x_{\check{S}}-x_{\check{S}'}|}{\min\{\ell(Q),\ell(R)\}}+1
\end{multline*}

It is not hard to show that $|x_{\check{S}}-x_{Q}|\lec \ell(Q(\check{S}))\lec \ell(Q_{S})$ and similarly $|x_{\check{S}'}-x_{R}| \lec \ell(Q_{S'})$, and so
\[
|x_{\check{S}}-x_{\check{S}'}|
\lec |x_{Q}-x_{R}|+\ell(Q_{S})+\ell(Q_{S'})
\stackrel{\eqref{e:xqxrqsqs'claim}}{\lec}
\tau^{-2} |x_{Q}-x_{R}|.
\]
Thus, combining the estimate \eqref{e:the-ups} for the length of the chain from $x_{S}$ to $x_{\check{S}}$ and from $x_{S'}$ to $x_{\check{S}'}$ and the estimate for the length of the chain from $x_{\check{S}}$ to $x_{\check{S}'}$, we obtain a chain from $x_{S}$ to $x_{S'}$ of length at most 
\[
\log\frac{|x_{Q}-x_{R}|}{\min\{\ell(Q),\ell(R)\}}+1.
\]

Finally, we can connect $x_{Q}$ to $x_{S}$ and $x_{R}$ to $x_{S'}$ by chains of total length at most
\[
\lec \log \frac{\ell(Q(S))}{\ell(Q)}+\log \frac{\ell(Q(S')}{\ell(R)}+1
\lec \log \frac{\max\{\ell(Q(\check{S})),\ell(Q(\check{S'}))\}}{\min\{ \ell(Q),\ell(R)\}}+1
\]
but this is the term in \ref{e:the-ups} which we have already bounded.

\end{proof}

\begin{lemma}
\label{l:omegacorkscrews}
The domain $\Omega_{0}$ has the exterior corkscrew property. 
\end{lemma}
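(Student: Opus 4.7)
The plan is to analyze the local structure of $\partial\Omega_0$ at a point $x\in \partial\Omega_0$ and produce an exterior corkscrew ball of radius $\sim r$ for any $0<r<\diam\Omega_0$. I will split into cases according to how $x$ sits relative to $\partial\Omega$ and the Whitney decomposition used in the construction. The key dichotomy is whether $x\in \overline{G}=\partial\Omega_0\cap\partial\Omega$ or $x$ lies on the boundary of a fattened Whitney cube $\lambda I$ for some $I\in \cW_0$ with $I\subseteq \Omega$.

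\textbf{Case 1: $x\in \partial\lambda I\setminus\partial\Omega$ for some $I\in \cW_0$.} Since $I$ is a Whitney cube, $\delta_\Omega(x)\sim \ell(I)$. If $r\leq M\ell(I)$ for a suitable large constant $M$, then we find an adjacent Whitney cube $I'$ (of size $\sim \ell(I)$) with $I'\notin \cW_0$; such an $I'$ exists since $x\in \partial\lambda I$ is not interior to $\Omega_0$, and for $\lambda$ sufficiently close to $1$ the point $x$ has $\lambda I'\cap \Omega_0=\emptyset$ near $x$ inside $I'$. A ball centered on the segment from $x$ into $I'$, of radius a small constant times $r$, is then contained in $B(x,r)\setminus \overline{\Omega_0}$. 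If $r>M\ell(I)$, I move along the boundary to a nearby point of $\overline{G}$, which is within distance $\lesssim \ell(I)\ll r$ by \eqref{e:gs}, and reduce to Case 2 at scale $r/2$.

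\textbf{Case 2: $x\in \overline{G}$.} Since $x\in \overline{G}$, there is $S\in \tilde\cF$ and $Q\in S$ with $x\in Q$ and $\ell(Q)\lesssim r$; by minimality I may select $Q$ with $\ell(Q)\sim r/M$ for a constant $M$ to be fixed. The bilateral corona approximation \eqref{e:closegraph} then gives that $\partial\Omega$ lies within $\eta\ell(Q)$ of the Lipschitz graph $\Gamma_S$ inside $K^2 B_Q$. Our construction chose one side (say $\Omega_S^+$) of $\Gamma_S$ to include in $\Omega_0$, so the opposite region $U_S^-\cap B(x,r)$ contains a ball $B'$ of radius $\sim r$ (because $\Gamma_S$ is $\eta$-Lipschitz). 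I claim $B'\cap \Omega_0=\emptyset$, which provides the exterior corkscrew. The proof of the claim is where the main obstacle lies: I must rule out that some other $\Omega_{S'}$ (with $S'\in \tilde\cF$) reaches into $U_S^-$ near $x$. This is handled using the $\tau$-separation built into $G$ (see \eqref{e:taudist}): if $S'$ is comparable to $S$ but $Q(S')\cap Q(S)=\emptyset$, then $Q(S')$ is at distance $\gtrsim \tau\ell(Q(S))$ from $x$, and the pieces of $\Omega_{S'}$ (including the tube $T_{S'}$) lie within a bounded multiple of $\ell(Q(S'))$ of $Q(S')$; if $Q(S')\subsetneq Q(S)$ then $\ell(Q(S'))\ll r$ and $\Omega_{S'}$ cannot reach to distance $r$ from $x$; and if $Q(S')\supsetneq Q(S)$, flatness \eqref{e:closegraph} forces $\Gamma_{S'}$ and $\Gamma_S$ to nearly coincide on $K^2B_{Q(S)}$, so $\Omega_{S'}$ sits on the same side as $\Omega_S$ (otherwise the tube $T_S$ would have to cross $\Gamma_S$, violating the accessibility used to choose $\Omega_S^+$). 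Choosing $\tau$, $\eta$ sufficiently small and $\lambda$ close to $1$ makes $B'\subseteq B(x,r)\setminus \overline{\Omega_0}$.

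\textbf{Case 3: $r$ is comparable to $\diam \Omega_0 \sim \ell(Q_0)$.} For such large $r$, I find an exterior corkscrew by working at the top stopping time $S_0$: since $\Omega_0\subseteq M B_{Q_0}$ for some $M$ depending on $N$ and the semi-uniformity constants, any point at distance $\sim r$ from $x$ that lies on the opposite side of $\Gamma_{S_0}$ and outside all the tubes is exterior to $\Omega_0$; such a point exists by the flatness of $\Gamma_{S_0}$ on $K^2 B_{Q_0}$ together with the containment $\Omega_{S_0}=\Omega_{S_0}^+$. The hardest part of the argument is the combinatorial control in Case 2 of all the $\Omega_{S'}$'s that could potentially obstruct the exterior corkscrew on the ``wrong'' side of $\Gamma_S$; the $\tau$-separation of $G$ and the choice of Lipschitz parameter $\eta$ are precisely what make this argument go through.
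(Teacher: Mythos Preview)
Your Case~1 and the overall reduction strategy are reasonable, but the heart of Case~2 has a genuine gap. You want to show that a ball $B'\subseteq U_S^-\cap B(x,r)$ is disjoint from $\Omega_0$, and your case analysis of the $\Omega_{S'}$ that could intrude is incomplete. The main problem is the tubes $T_{S'}$: these are built from Harnack chains in the ambient semi-uniform domain $\Omega$, and semi-uniformity gives no control over which ``side'' of the nearly flat piece $\Gamma_S$ such a chain visits. Your argument that ancestors $S'$ sit on the same side as $S$ (``otherwise $T_S$ would have to cross $\Gamma_S$'') does not work, because $T_S$ may cross $\Gamma_S$ at a point where $d_S$ is large (away from cubes of $S$), and \eqref{e:closegraph2} only says $\Gamma_S$ is close to $\partial\Omega$ \emph{relative to $d_S$}. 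Likewise, a descendant $S''$ of $S$ could have $\Omega_{S''}=\Omega_{S''}^{-}$, and then both $\Omega_{S''}^{-}$ and the tube $T_{S''}$ connecting it back to $\Omega_S^{+}$ may occupy part of $U_S^{-}$ near $x$. The $\tau$-separation in \eqref{e:taudist} controls distances between $G$-points in disjoint stopping-time cubes, not the location of tubes in $\Omega$.

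The paper sidesteps all of this combinatorics by exploiting the uniformity of $\Omega_0$, which has already been established in Lemma~\ref{l:omegauniform}. For $r>2\delta_\Omega(x)$ one picks $y\in \partial\Omega\cap B(x,r/2)$, then uses the BWGL characterization of uniform rectifiability (Theorem~\ref{theor:UR-2}) to locate a scale $r'\sim_{\varepsilon,M} r$ and center $z\in B(y,r/4)\cap\partial\Omega$ at which $\partial\Omega$ is bilaterally $\varepsilon$-flat relative to a plane $P$ in $B(z,Mr')$. Setting $x^{\pm}=z\pm \tfrac{r'}{2}e_P$, one argues by contradiction that at least one of $x^{\pm}$ lies in $\Omega_0^{c}$: if both were in $\Omega_0$, the cigar curve joining them in $\Omega_0$ would meet $P$ at a point $\zeta\in B(z,Mr')$ with $\dist(\zeta,\partial\Omega)\geq \dist(\zeta,\partial\Omega_0)\gtrsim r'$, contradicting the bilateral flatness. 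This uses nothing about which pieces $\Omega_{S'}$ lie on which side---only that $\Omega_0$ is uniform and $\Omega_0\subseteq\Omega$. The exterior corkscrew is then found near whichever of $x^{\pm}$ is outside $\Omega_0$ (possibly after one more application of the small-scale case). The moral is that once uniformity is in hand, you should use it rather than redo the stopping-time bookkeeping.
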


\def\bbeta{b\beta}
The proof is similar to \cite[Lemma 4.1]{AHMNT17}. We first recall some facts about UR sets. For a set $E$, $x\in E$, a hyperplane $P$, and $r>0$ define 
\[\bbeta_{E}(x,r,P)=r^{-1}\left(\sup_{y\in
E\cap B(x,r)}\dist(y,P)+ \sup_{y\in P\cap
B(x,r)}\dist(y,E)\right)\]
and then define
\[\bbeta_{E}(x,r)=\inf_{P}\bbeta_E(x,r,P)\]
where the infimum is over all $d$-dimensional hyperplanes
$P\subseteq\mathbb{R}^{d+1}$.

\begin{definition}\label{defi:BWGL}
We say that an $d$-dimensional ADR set $E$
satisfies the {\it bilateral weak geometric lemma} or {\it
BWGL} if, for each $\varepsilon>0$,
the set
\[
{B}_{\varepsilon}:=
\big\{(x,r):x\in E, r>0,\bbeta_E(x,r)\geq\varepsilon\big\}
\]
is a {\it Carleson set}, i.e., if we define
\[
\widehat{\sigma}(A)=\iint_{A}d\cH^d\frac{dt}{t},
\qquad A\subset E\times(0,\infty),
\]
then
\begin{equation}\label{e:carleson-sum}
\widehat{\sigma}
\left( B_{\varepsilon}\cap \big(B(x,r)\times (0,r)\big)\right)
\lec r^{d}
\end{equation} for all $x\in E$ and $0<r<\diam E$.
\end{definition}

\begin{theorem}[{\cite[Theorem 2.4, Part I]{DS2}}]\label{theor:UR-2}
A $d$-dimensional ADR set $E$ is uniformly rectifiable if and only if it satisfies the BWGL.
\end{theorem}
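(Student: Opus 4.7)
The plan is to prove both implications via the corona/stopping-time machinery of David and Semmes, using the bilateral corona decomposition of Lemma \ref{l:HMM} as a bridge in one direction and building an analogous decomposition from scratch in the other.

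For UR $\Rightarrow$ BWGL, I would apply Lemma \ref{l:HMM} directly: it is stated for UR sets and encodes, in each stopping-time region $S$, a bilateral $\eta$-approximation of $E$ by an $\eta$-Lipschitz graph $\Gamma_{S}$ on $K^{2}B_{Q}$ for every $Q\in S$. Fix $\varepsilon$ and choose $\eta\ll\varepsilon$. Taking the test plane $P$ in the definition of $\bbeta$ to be the tangent plane to $\Gamma_{S}$ through a point near $\zeta_{Q}$, condition \eqref{e:closegraph} forces $\bbeta_{E}(\zeta_{Q},C\ell(Q))\lec\eta<\varepsilon$ for every $Q\in\cG=\bigcup_{S\in\cF}S$. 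Hence, up to the natural comparability between $(x,r)$ and a dyadic cube $Q$ with $x\in Q$ and $\ell(Q)\sim r$, any $(x,r)\in B_{\varepsilon}$ must be associated to a cube in $\cB$. The Carleson packing $\sum_{Q\in\cB,\,Q\subseteq R}\sigma(Q)\lec\sigma(R)$ then translates directly into \eqref{e:carleson-sum}, using $\int_{\ell(Q)/2}^{\ell(Q)}dt/t\sim 1$ to convert sums over cubes into the $\widehat{\sigma}$-integral.

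For the converse BWGL $\Rightarrow$ UR, I would construct big pieces of Lipschitz graphs by a direct stopping-time argument. Fix $R_{0}\in\cD$ and $\eta>0$ small, and let $P_{R_{0}}$ be a best-approximating plane for $R_{0}$ at its own scale. Recursively descend from $R_{0}$, stopping at $Q$ the first time either (i) $\bbeta_{E}(\zeta_{Q},C\ell(Q))>\eta$ or (ii) the angle between the best-approximating plane at $Q$ and $P_{R_{0}}$ exceeds $\eta$. BWGL gives a Carleson packing bound for type-(i) stops; for type (ii), a telescoping argument comparing consecutive approximating planes shows that the angular drift across scales is controlled by a sum of bilateral betas, so BWGL also packs the type-(ii) stops. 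Within the resulting stopping region, $E$ is bilaterally $\eta$-close to $P_{R_{0}}$ at every scale, and ADR together with the bilateral approximation yields an $\eta$-Lipschitz graph over $P_{R_{0}}$ capturing a definite fraction of $\sigma(R_{0})$. This is precisely big pieces of Lipschitz graphs, hence UR.

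The main technical obstacle is the angle-telescoping step in the converse. Without the bilateral term, $E$ could lie in a lower-dimensional piece of the approximating plane, and the next best-approximating plane could then rotate almost freely, forcing the Lipschitz constant of the limiting graph to blow up. The second term $\sup_{y\in P\cap B(x,r)}\dist(y,E)$ in the definition of $\bbeta$ rules this out: it forces $E$ to fill up $P\cap B(x,r)$ in the Hausdorff sense, so any competing plane at a comparable scale must be nearly parallel to $P$. This is exactly what makes the Lipschitz construction succeed with a constant proportional to $\eta$.
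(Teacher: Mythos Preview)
The paper does not prove this theorem at all: it is stated as a citation of \cite[Theorem~2.4, Part~I]{DS2} and used as a black box in the proof of Lemma~\ref{l:omegacorkscrews}. So there is no ``paper's proof'' to compare against; your proposal is an attempt to supply an argument where the paper simply quotes the literature.

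That said, your UR $\Rightarrow$ BWGL direction has a circularity problem. Lemma~\ref{l:HMM} is the bilateral corona decomposition of \cite{HMM14}, and its proof already relies on the David--Semmes machinery characterizing UR, including BWGL-type estimates, as input. Invoking it to deduce BWGL is therefore not an independent argument. A genuinely self-contained proof of UR $\Rightarrow$ BWGL would instead start from the definition of UR used in this paper (big pieces of Lipschitz images), pass through the David--Semmes corona decomposition (unilateral, with $\beta$'s rather than $b\beta$'s), and then upgrade to the bilateral estimate using ADR to control the second supremum in the definition of $b\beta$; this is essentially what \cite{DS2} does.

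Your BWGL $\Rightarrow$ UR sketch is closer in spirit to the original, and your identification of the angle-telescoping step as the key obstacle is correct. However, what you outline yields big pieces of Lipschitz \emph{graphs}, whereas the paper's working definition of UR only requires big pieces of Lipschitz \emph{images}; the graph conclusion is stronger but also harder, and the argument you describe (bilateral flatness at all scales in the stopping region forces a Lipschitz graph over $P_{R_0}$) needs more than you have written to actually produce the graph and show the measure lower bound.
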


\begin{proof}[Proof of Lemma \ref{l:omegacorkscrews}]
Let $x\in \d\Omega_{0}$ and $r>0$. If $r\leq 2\delta_{\Omega}(x)$, then $x\in \d\lambda Q$ for some Whitney cube $Q\in \cW_{S}$ for some $S\in \tilde{\cF}$. Then it is not hard to see that, since $\Omega_{0}$ is a union of dilated Whitney cubes that we may find a large corkscrew point in $B(x,r)\backslash \Omega_{0}$. 

If $r> 2\delta_{\Omega}(x)$, then there is $y\in \d\Omega\cap B(x,r/2)$. Since $\d\Omega$ is uniformly rectifiable, by Theorem \ref{theor:UR-2}, \eqref{e:carleson-sum} holds, and so for any $\ve>0$ and $M>0$, we may find a ball $B(z,r')$ with $r/4>r'\gec_{\ve,M} r$ and $z\in B(y,r/4)\cap \d\Omega$ for which 
\begin{equation}
\label{e:beta}
 \bbeta_{\d\Omega} (z,Mr',P)<\ve.
\end{equation}
for some $d$-plane $P$.  By replacing $\ve$ with $2\ve$ if need be, we can assume $z\in P$. Let $x^{\pm}=z\pm \frac{r'}{2} e_{P}$, where $e_{P}$ is the unit normal vector to $P$.  

{\bf Claim:} At least one of $x^{+}$  and $x^{-}$ is in $\Omega_{0}^{c}$. Suppose instead that both were contained in $\Omega_{0}$. Then by uniformity, there is a $C$-cigar curve $\gamma\subseteq \Omega_{0}$ from $x^{+}$ to $x^{-}$. If $\zeta\in P\cap \gamma$, we must have $\zeta\in B(z,C|x^{+}-x^{-}|)\subseteq B(z,2Cr')$ and, since $\Omega_{0}\subseteq \Omega$, 
\begin{align*}
\dist(\zeta,\d\Omega)
& \geq \dist(\zeta,\d\Omega_{0})
\geq C^{-1}\min\{\ell(\zeta,x^{+}),\ell(\zeta,x^{-})\}\\
& \geq C^{-1} \min\{|\zeta-x^{+}|,|\zeta-x^{-}|\}
\geq \frac{r'}{2C}.
\end{align*}

If we pick $M>3C$ and $\ve<\frac{1}{4CM}$, then this is a contradiction since \eqref{e:beta} implies 
\[
\sup_{w\in P\cap
B(z,Mr')}\dist(w,\d\Omega)<M\ve r'<\frac{r'}{4C}.\] 
This proves the claim. 

Without loss of generality, we may assume $x^{+}\in \Omega_{0}^{c}$. Note that by \eqref{e:beta}, 
\[
\dist(x^{+},\d\Omega)\geq \dist(x^{+},P)-M\ve r'
\geq \frac{r'}{4}
\]
so $B(x^{+},r'/4)\subseteq  B(z,r')\backslash \d\Omega  \subseteq B(x,r)\backslash \d\Omega$. If $B(x^{+},r'/8)\cap \d\Omega_{0}=\emptyset$, then $B(x^+,r'/8)\subseteq B(x,r)  \backslash \Omega_{0}$
 is our desired exterior corkscrew ball. Otherwise, there is $z'\in B(x^{+},r'/8)\cap \d\Omega_{0}$, and by the above displayed inequality, $z'\not\in \d\Omega$, so $z'\in \d\lambda Q$ for some $Q\in \cW_{0}$. By case 1, we may find an exterior corkscrew point in $B(z',r'/8)\backslash \Omega_{0}$, and then use this as our exterior corkscrew ball for $B(x,r)$. 

\end{proof}

\begin{lemma}
The domain $\Omega_{0}$ has ADR boundary.
\label{l:omegadr}
\end{lemma}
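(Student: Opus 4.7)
The plan is to establish both the upper and lower $d$-Ahlfors--David regularity bounds for $\d\Omega_{0}$: for $x\in \d\Omega_{0}$ and $0<r<\diam \d\Omega_{0}$, we need to show $\cH^{d}(\d\Omega_{0}\cap B(x,r))\sim r^{d}$.

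For the lower bound, I would invoke the interior corkscrew property (established in the first statement labeled Lemma \ref{l:omegacorkscrews}) together with the exterior corkscrew property (the second such lemma) to produce balls $B^{+}\subseteq B(x,r)\cap \Omega_{0}$ and $B^{-}\subseteq B(x,r)\setminus \cnj{\Omega_{0}}$ both of radius $\sim r$. A standard projection argument then finishes this half: projecting $\d\Omega_{0}\cap B(x,r)$ onto a $d$-plane separating $B^{+}$ and $B^{-}$ covers a subset of $d$-measure $\gec r^{d}$, and since projections are $1$-Lipschitz this forces $\cH^{d}(\d\Omega_{0}\cap B(x,r))\gec r^{d}$.

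For the upper bound, I would use the decomposition
\[
\d\Omega_{0}\cap B(x,r) \subseteq \ps{\d\Omega\cap B(x,r)} \cup \bigcup_{Q\in \cW_{0}} \ps{\d(\lambda Q)\cap B(x,r)}\setminus \d\Omega.
\]
The first piece has $\cH^{d}$-measure $\lec r^{d}$ by ADR of $\d\Omega$. For the internal pieces, the key observation is that a face $F$ of $\lambda Q$ with $Q\in \cW_{0}$ can lie on $\d\Omega_{0}\setminus \d\Omega$ only if the adjacent Whitney cube across $F$ either fails to exist (so $F$ lies within $C\ell(Q)$ of $\d\Omega$) or is not in $\cW_{0}$. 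I would then group the contributing cubes by the stopping-time region $S\in \tilde{\cF}$ for which $\lambda Q\subseteq \Omega_{S}$. For each $S$ with $Q(S)\cap B(x,Cr)\neq\emptyset$, the faces from $\Omega_{S}^{\pm}$ lying in $B(x,r)$ contribute $\lec \ell(Q(S))^{d}$ (since these faces shadow the Lipschitz graph $\Gamma_{S}$ inside $KB_{Q(S)}$ by \eqref{e:closegraph2} and \eqref{e:closegraph3}), and the tube $T_{S}$ contributes at most $\lec \ell(Q(S))^{d}$ as well since it has diameter $\sim \ell(Q(S))$ and is built from boundedly many Whitney cubes of comparable size. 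Summing over $S$ and applying the Carleson packing condition on $\{Q(S):S\in \cF\}$ from Lemma \ref{l:HMM}, combined with ADR of $\d\Omega$, yields $\lec r^{d}$.

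The main obstacle will be a careful accounting of the Whitney cube faces contributing at the \emph{seams} where the domains $\Omega_{S}$ for neighboring stopping-time regions meet. This is where the $\tau$-buffer introduced when passing from $S$ to $\tilde{S}$ becomes essential: the separation property \eqref{e:taudist} forces such faces to be at a controlled distance from $\d\Omega$, pinning the scale of contributing cubes $Q$ to be comparable to a fixed fraction of $\ell(Q(S))$ for the relevant $S$. This is what prevents logarithmic losses across scales and lets the Carleson packing on stopping-time regions close the geometric series.
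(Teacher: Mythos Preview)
Your approach is essentially the same as the paper's: the paper's proof is a two-line sketch stating that upper regularity follows from the Carleson packing condition together with the fact that each $\Omega_{S}$ is ADR, and lower regularity follows from the interior and exterior corkscrew conditions, with a reference to \cite[Appendix A.3]{HM14} for the details. Your outline fleshes out exactly this argument; the one point worth making explicit is the split between stopping-time regions $S$ with $\ell(Q(S))\gtrsim r$ (boundedly many, handled by the ADR of each individual $\Omega_{S}$) and those with $\ell(Q(S))\lesssim r$ (handled by the Carleson packing), but this is routine.
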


\begin{proof}
Upper regularity follows from the Carleson packing condition and the fact that each $\Omega_{S}$ is ADR, and lower regularity follows from the interior and exterior corkscrew conditions. See for example \cite[Appendix A.3]{HM14}. 
\end{proof}

We have now established that $\Omega_{0}$ is uniform with exterior corkscrews and ADR boundary, that is, $\Omega_{0}$ is a CAD, and this finishes the proof of Lemma \ref{l:CAD}.

\section{The  proof of Theorem \ref{thmii}}
\label{s:proof-of-theorem}

We now prove Theorem \ref{thmii}. The implication (1) implies (2) is Theorem \ref{l:CAD}, and as mentioned before, (2) implies (1) is immediate, so (1) and (2) are equivalent, and (as mentioned in the introduction) (3) implies (1) follows by earlier work, so we will just show that (2) implies (3).  We will fist prove the $A_{\infty}$-property in cubes that are tops of stopping-time regions. Let $\delta>0$, $Q_{0}\in \{Q(S):S\in \cF\}$, $x\in \Omega\backslash MB_{Q_{0}}$ (where $M$ is as in Lemma \ref{l:mainhmlemma2}), and  $E\subseteq Q_{0}$ be so that
\[
\frac{\omega_{\Omega}^{x}(E)}{\omega_{\Omega}^{x}(Q_{0})}
< \ve
\]
where $\ve>0$ is a constant we will choose to be small later depending on $\delta$. By Lemma \ref{l:mainhmlemma2}, there is a corkscrew point $x_{0}$ for $B_{Q_{0}}$ so that 
\[
\omega_{\Omega}^{x_{0}}(E)\lec \ve. 
\]
Let $\Omega_{0}$ be from Lemma \ref{l:CAD} contain $x_{0}$ as a corkscrew point and such that
\[
\cH^{d}(Q_{0}\backslash \d\Omega_{0})<\frac{\delta}{2}\cH^{d}(Q_{0}).\]
Then by the maximum principle, 
\[
\omega_{\Omega_{0}}^{x_{0}}(E)
\leq  \omega_{\Omega}^{x_{0}}(E)\lec \ve.
\]
By the main result in \cite{DJ90}, $\omega_{\Omega_{0}}^{x_{0}}$ is an $A_{\infty}$-weight, and so for $\ve>0$ small enough, we can guarantee that
\[
\cH^{d}(\d\Omega_{0}\cap E)<\frac{\delta}{2}\cH^{d}(Q_{0})
\]
Thus, 
\[
\cH^{d}(E)<\delta \cH^{d}(Q_{0}).
\]

This proves $\omega_{\Omega}^{x}\in A_{\infty}(\cH^{d}|_{\d\Omega},Q_{0})$. \\

Now we prove the general result. Let $\alpha>0$, $B$ be an arbitrary ball centered on $\d\Omega$.  By rescaling, we can assume without loss of generality that $r_{B}=c_0$ where $c_0$ is as in Theorem \ref{t:Christ}, and then pick a nested sequence of maximal $\rho^k$-nets for $\d\Omega$ $X_k$ so that $x_{B}\in X_0$. In this way, there is a cube $Q_0\in \cD_{0}$ with center $\zeta_Q=x_{B}$, so $c_{0}B_{Q_{0}}=B$, and hence $B\cap \d\Omega\subseteq Q_{0}$. Let $x\in \Omega$ be so that $\dist(x,MB)\geq \alpha>0$. 

Let $\cF$ be stopping-time regions as in Lemma \ref{l:HMM}. Since the $\{Q(S):S\in \cF\}$ satisfy a Carleson packing condition, it is not hard to show that we many find finitely many $S_{1},...,S_{N}$ so that 
\begin{enumerate}
\item $Q(S_{i})\subseteq Q_{0}$,
\item $Q(S_{i})\cap Q(S_{j})=\emptyset$ when $i\neq j$,
\item $\ell(Q(S_{i}))< \rho  \ell(Q_{0})$,
\item $N\sim_{\delta'} 1$,
\item $\cH^{d}(Q_{0}\backslash \bigcup_{i=1}^{N}Q(S_{i}))<\delta'\cH^{d}(Q_{0})$
\end{enumerate}
where $\delta'$ is a small number we will fix shortly, and $\rho$ is chosen small enough, depending on $\alpha$ and $M$, so that $x\in \Omega\backslash MB_{Q(S_{i})}$ for all $S_{i}$.

Let $E\subseteq B\cap \d\Omega$ and suppose $\omega_{\Omega}^{x}(E)<\ve$. By the doubling property for $\omega_{\Omega}$,
\[
\omega_{\Omega}^{x}(E \cap Q(S_{i}))
\leq \omega_{\Omega}^{x}(E)
<\ve \omega_{\Omega}^{x}(B)
\lec_{N} \ve \omega_{\Omega}^{x}(Q(S_{i})).
\]
and since $\omega_{\Omega}^{x}\in A_{\infty}(Q(S_{i}))$ for each $i$,  and $\ve>0$ small enough (depending on $N$ and hence only on $\delta$), we then have 
\begin{align*}
\cH^{d}(E)
& \leq \cH^{d}(E\backslash \bigcup_{i=1}^{N} Q(S_{i}))
+\sum_{i=1}^{N} \cH^{d}(E\cap Q(S_{i}))\\
& \leq \delta' \cH^{d}(Q_{0})+ \delta' \sum_{i=1}^{N} \cH^{d}( Q(S_{i}))
<\delta' \cH^{d}(Q_{0})\\
& \lec \delta' \cH^{d}(B\cap \d\Omega )
\end{align*}
and so for $\delta'$ small enough, we have $\cH^{d}(E)<\delta  \cH^{d}(B\cap \d\Omega )$, as desired.

\appendix

\section{Hrycak's example}

Here we sketch the construction of Hrycak's example and how to use it to give a semi-uniform domain with UR boundary but without BPLS. We fix an integer $n$ and define a set $E\subseteq \bR^{2}$ using the method of Venician blinds. Let $E_{0}=I_{0}$ be the unit line segment. Now divide it into $n$ half-open sub-intervals and rotate each interval $\theta=\frac{2\pi}{n}$ radians counterclockwise around its left endpoint, call this new set $E_{1}$, see Figure \ref{f:hrycak}. Let the new intervals be called $I_{1},...,I_{n}$, ordered by where their left endpoint lies along the real line. 

\begin{figure}[!ht]
\includegraphics[width=340pt]{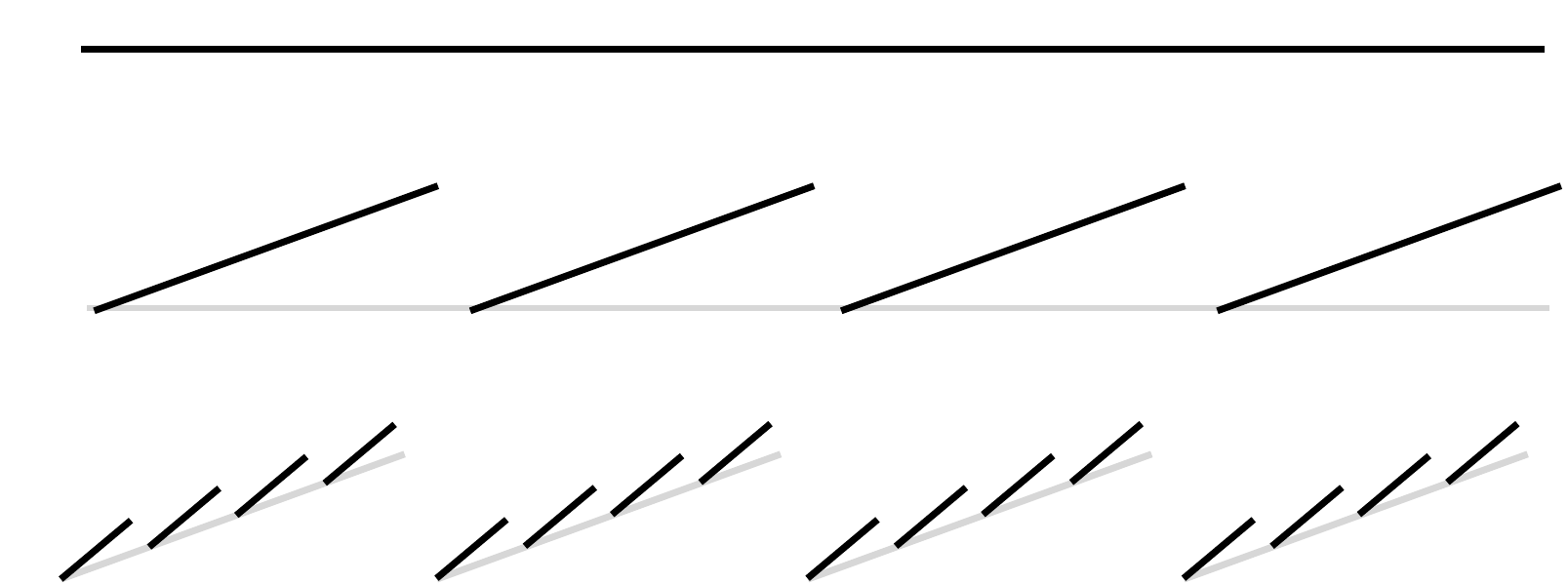}
\begin{picture}(0,0)(340,0)
\put(-10,125){$E_{0}$}
\put(-10,50){$E_{1}$}
\put(-10,0){$E_{2}$}
\end{picture}
\caption{Hrycak's example.}
\label{f:hrycak}
\end{figure}

Repeat this on each new interval and so on for a total of $n$ times, defining sets $E_{2},...,E_{n}$ along the way, so that $E_{j}$ is a union of a set of $n^{j}$ half-open intervals $I_{\alpha}$ where $\alpha$ is a multi-index $\alpha=k_{1}....k_{j}$ and $k_{i}\in \{1,2,...,n\}$, and for $|\alpha|=j$ and each $i\in \{1,2...,n\}$, $I_{\alpha i}$ is the $ith$ subinterval of $I_{\alpha}$ rotated about its left endpoint. 

The resulting set $E=E_{n}$ is an ADR set whose orthogonal projections in the directions $0,\theta,2\theta,...,(n-1)\theta$ are at most $Cn^{-1}$ for some constant $C$. In particular, in order for a Lipschitz graph to intersect at least $c\%$ of it, say, the Lipschitz constant must be at least a constant times $n$. 

The remainder of the proof focuses on showing that $E^{c}$ is semi-uniform. We will do this by showing that $E$ bi-Lipschitz embeds into the real line. By a theorem of MacManus, we can extend the inverse of this map to a global bi-Lipschitz map of $\R^{2}$, and hence $E^{c}$ is the bi-Lipschitz image of a domain of the form $\R^{2}\backslash A$ where $A\subseteq \R$, and it is immediate that such domains are semi-uniform. Now we begin with the details.
%
%
%

We now construct a bi-Lipschitz embedding of $E$ into the real line. Let $K_{1},...,K_{n}$ be the intervals in the unit interval (ordered by their position on the real line) obtained by deleting $n-1$ equally spaced intervals of length $n^{-2}$. Then 
\[
|K_{j}|=(1-(n-1)\cdot \frac{1}{n^{2}})/n=: \frac{c_{n}}{n}.
\]
If $|\alpha|=j-1$ and we have defined $K_{\alpha}$, let $K_{\alpha 1},...,K_{\alpha n}$ be the intervals in $K_{\alpha}$ (ordered by their position on the real line) obtained by removing $n-1$ intervals of length $|K_{\alpha}|n^{-2}$. Note that since $ c_{n}^{n}\geq \frac{1}{2}$,
\[
|K_{\alpha i}|=|K_{\alpha}|\frac{c_{n}}{n}
=\frac{c_{n}^{j} }{n^{j}}\in [n^{-j}/2,n^{-j}]
\]
and the gaps between the $K_{\alpha i}$ are 
\[
|K_{\alpha}|n^{-2}  \in [n^{-j-2}/2,n^{-j-2}].
\]
It is not hard to show that, for $|\alpha|<n$, $\bigcup_{i=1}^{n} I_{\alpha i}$ bi-Lipschitz embeds into $I_{\alpha}$ by some $L$-bi-Lipschitz map $f_{\alpha}$ (with $L$ just some constant independent of $\alpha$) so that its image is $I_{\alpha}$ with $n-1$ equally spaced intervals of length $|I_{\alpha}|c_{n}n^{-1}$.

In particular, if $J_{\alpha 1},...,J_{\alpha n}$ are these intervals in order along $I_{\alpha}$, then $f_{\alpha}: I_{\alpha_{i}}\rightarrow J_{\alpha i}$ is affine and surjective, and
\[
\frac{|f_{\alpha}(I_{\alpha i})|}{|I_{\alpha i}|}
=c_{n}\]
We let $f_0$ be the map that takes $\bigcup_{i=1}^{n} I_{i}$ into the unit interval $I_{0}$. Hence, if $\alpha=\alpha_{1}\cdots \alpha_{k}$, define
\[
F_{\alpha}:=f_{0}\circ f_{\alpha_{1}}\circ f_{\alpha_{1}\alpha_{2}}\circ\cdots \circ f_{\alpha_{1}\alpha_{2}\cdots \alpha_{k-1}},\]
then $F_{\alpha}$ is an affine $c_{n}^{k}$-bi-Lipschitz map on $I_{\alpha}$, but since $c_{n}^{n}\sim 1$ for all $n$, we get that $F_{\alpha}$ is $C$-bi-Lipschitz on $I_{\alpha}$ for some universal constant $C$. 

 For $x\in I_{\alpha}\subseteq E$ with $|\alpha|=n$, we define $F(x)= F_{\alpha}(x)$. Moreover, we can do this in such a way that $F(I_{\alpha})=K_{\alpha}$ for all $|\alpha|=n$. 

We will now show this map is bi-Lipschitz. 

For $x\in I_{\beta}$ with $|\beta|=n$ and $\alpha=\beta_{1}\cdots \beta_{j}$ with $j<n$, define
\[
f^{\alpha}(x):=f_{\beta_{1}\cdots \beta_{j-1}} \circ\cdots \circ f_{\beta_{1}\cdots \beta_{n-1}} (x)\in I_{\alpha}\]
This defines a map  $f^{\alpha}:E\rightarrow  E_{j}$ so that $F_{\alpha}(f^{\alpha}(x))= F(x)$. 

If $x\in I_{\alpha}$ with $|\alpha|=n$, then if $\beta=\alpha_{1}...\alpha_{j}$
\begin{equation}
\label{e:lfax-x}
|f^{\beta}(x)-x|\lec \sum_{i=j+1}^{n} \frac{1}{n^{i+1}} \lec \frac{1}{n^{j+2}}.
\end{equation}
 
Now let $x\in I_{\alpha}$ and $y\in I_{\alpha'}$ with $|\alpha|=|\alpha'|=n$, we can assume $\alpha\neq \alpha'$. Let $\alpha_{0}$ be the largest common truncation of $\alpha$ and $\alpha'$, or equivalently, so that $K_{\alpha_{0}}$ is the smallest common ancestor to $K_{\alpha}$ and $K_{\alpha'}$, set $j=|\alpha_{0}|$ and $x_{0}=f^{\alpha_{0}}(x)$ and $y_{0}=f^{\alpha_{0}}(y)$. Observe that $F(x)=F_{\alpha_{0}}(x_{0}) \in K_{\beta}$ and $F(y)=F_{\alpha_{0}}(y_{0})\in K_{\beta'}$ where $K_\beta$ and $K_{\beta'}$ are the intervals created from $K_{\alpha_{0}}$ so that $\beta$ is a truncation of $\alpha$ and $\beta'$ is a truncation of $\alpha'$. Hence
\[
|x_{0}-y_{0}|\gec
|F_{\alpha_{0}}(x_{0})-F_{\alpha_{0}}(y_{0})|\geq \dist(K_{\beta},K_{\beta'})
\gec n^{-2}|K_{\alpha_{0}}|
\gec   n^{-j-2}.
\]

If $|x-y|\geq Mn^{-j-2}$, then
\[
||x-y| -|x_{0}-y_{0}||
\leq |x-x_{0}|+|y-y_{0}|
\stackrel{\eqref{e:lfax-x}}{\lec} n^{-j-2}
\lec M^{-1} |x-y|\]
and so for $M$ large enough, $|x-y|\sim |x_{0}-y_{0}|$. Hence, 
\[
|x-y|\sim |x_{0}-y_{0}|\sim |F_{\alpha_{0}}(x_{0})-F_{\alpha_{0}}(y_{0})| 
=|F(x)-F(y)|.\]

If $|x-y|< Mn^{-j-2}$, then, because $x\in I_{\alpha}$ and $y\in I_{\alpha'}$ and $|\alpha|=|\alpha'|\geq j+1$, for $n$ large enough,
\[
|x-y|\geq \dist(I_{\alpha},I_{\alpha'})
\gec n^{-j-2}\]
hence
\begin{align*}
|F(x)-F(y)|
& =|F_{\alpha_{0}}(x_{0})-F_{\alpha_{0}}(y_{0})|
\lec |x_{0}-y_{0}|
\leq |x_{0}-x|+|y_{0}-y|\\
& \lec n^{-j-2}\lec |x-y|
\end{align*}
and if $I_\beta$ and $I_{\beta'}$ are the intervals created from $I_{\alpha_{0}}$ so that $\beta$ is a truncation of $\alpha$ and $\beta'$ is a truncation of $\alpha'$, then 
\[
|x-y|
\leq Mn^{-j-2}
\lec M\dist(K_{\beta},K_{\beta'})
\leq M|F(x)-F(y)|.
\]

Thus, $F:E\rightarrow \bR$ is bi-Lipschitz with constant independent of $n$. In particular, $E$ is UR. By \cite[Theorem 1]{Mac95}, this bi-Lipschitz map can be extended to a bi-Lipschitz map $f$ of the plane onto itself. In particular, since $f(E)$ is a subset of the real line, hence if we define $\Omega=\bR^{2}\backslash E$, then $f^(\Omega)$ is a domain whose boundary is contained in the real line. It is thus easy to prove that $f(\Omega)$ is a semi-uniform domain, and hence so is $\Omega$ since $f$ is bi-Lipschitz (with a different semi-uniformity constant, but ultimately one that is independent of $n$). Thus, since we can pick $n$ as large as we wish, for any $L$ and $\ve>0$, we can now construct a semi-uniform domain $\Omega\subseteq \bR^{2}$ with ADR and UR boundary (both independent of $L$) so that the intersection of $\d\Omega$ with any Lipschitz graph of constant $L$ has measure at most $\ve$.

\frenchspacing

\bibliographystyle{alpha}

\newcommand{\etalchar}[1]{$^{#1}$}
\def\cprime{$'$}

\end{document}